\documentclass[12pt]{article}
 
\usepackage{myarticle}
\usepackage[colorinlistoftodos,prependcaption,textsize=tiny,backgroundcolor=black!5!white,bordercolor=red]{todonotes}

\usepackage{pgfplots}
\pgfplotsset{
  tick label style={font=\footnotesize},
  label style={font=\footnotesize},
  legend style={font=\footnotesize}
}



\newcommand{\R}{\mathbb R}             
\newcommand{\N}{\mathbb N}             
\newcommand{\eR}{\overline{\R}}        




\DeclareMathOperator*{\argmin}{argmin}

\newcommand{\iter}[1]{^{(#1)}}

\let\phitmp\phi
\let\phi\varphi
\let\varphi\phitmp
\let\epstmp\eps
\let\eps\varepsilon
\let\varepsilon\epstmp

\newcommand{\set}[1]{\lbrace #1 \rbrace}                    
\newcommand{\sint}{\mathrm{int}\,}                          
\newcommand{\ri}{\mathrm{ri}\,}                             
\newcommand{\aff}{\mathrm{aff}\,}                           



\newcommand{\seq}[2][]{(#2)_{#1}}                            
\newcommand{\norm}[2][]{\Vert #2 \Vert_{#1}}                 
\newcommand{\scal}[2]{\left\langle #1,#2 \right\rangle}      
\newcommand{\map}[3]{#1\colon #2 \to #3}                     
\newcommand{\opid}{I}                                        

\newcommand{\grad}[1][]{\nabla_{#1}}                              
\newcommand{\Hess}[1][]{\nabla^2_{#1}}                     

\newcommand{\ncone}[1]{\ensuremath{N_{#1}}}             
\renewcommand{\o}{\mathrm{o}}                           
\renewcommand{\O}{\mathcal O}                          
\newcommand{\prox}[2]{\mathrm{prox}_{#1#2}}             
\newcommand{\proj}[1]{\mathrm{proj}_{#1}}               

\newcommand{\spLin}{\mathcal L}                       
\newcommand{\spSob}[2][]{\ensuremath{H^{#2\ifthenelse{\equal{#1}{\empty}}{}{,#1}}}}


\newcommand{\setPrm}{\Omega}                            
\newcommand{\spVar}{\mathcal X}                         
\newcommand{\spPrm}{\mathcal U}                         
\newcommand{\manif}{\mathcal M}                         
\newcommand{\spPar}[1]{\mathrm{par}\,#1}                
\newcommand{\spTan}[1]{T_{#1} \manif}                   
\newcommand{\spNor}[1]{N_{#1} \manif}                   

\newcommand{\epsK}{\bm\eps ^{(K)}}
\newcommand{\epsk}{\bm\eps ^{(k)}}
\newcommand{\epsi}{\bm\eps ^{(i)}}

\newcommand{\x}{\bm x}                                  
\newcommand{\xmin}{\x^*}                                
\newcommand{\xz}{\x ^{(0)}}                             
\newcommand{\xzm}{\x ^{(-1)}}                           
\newcommand{\xk}{\x ^{(k)}}                             
\newcommand{\xkp}{\x ^{(k+1)}}                          
\newcommand{\xkm}{\x ^{(k-1)}}                          
\newcommand{\xK}{\x ^{(K)}}                             

\newcommand{\z}{\bm z}                                  
\newcommand{\zk}{\z ^{(k)}}                             

\newcommand{\sslow}{\underline\alpha}
\newcommand{\ssup}{\bar\alpha}

\renewcommand{\u}{\bm u}                                
\newcommand{\ud}{\dot\u}                                

\newcommand{\givenPrm}{\u^*}

\newcommand{\y}{\bm y}                                  
\newcommand{\yk}{\y ^{(k)}}                             

\newcommand{\w}{\bm w}                                  
\newcommand{\wk}{\w ^{(k)}}                             


\renewcommand{\b}{\bm b}                                
\newcommand{\bk}{\b ^{(k)}}                             

\renewcommand{\v}{\bm v}                                

\newcommand{\bmu}{\bm\mu}
\newcommand{\bmuk}{\bmu^{(k)}}

\newcommand{\e}{\bm e}                                  
\newcommand{\ez}{\e ^{(0)}}                             
\newcommand{\ek}{\e ^{(k)}}                             
\newcommand{\ekp}{\e ^{(k+1)}}                          
\newcommand{\eK}{\e ^{(K)}}                             

\renewcommand{\d}{\bm d}                                  
\newcommand{\dk}{\d ^{(k)}}                             

\newcommand{\argmap}{\psi}

\newcommand{\wein}[3][]{\mathfrak W_{#1} (#2, #3)}
\newcommand{\bwd}[1][]{\mathrm{P}_{\alpha_{#1}g}}                  
\newcommand{\pgd}[1][]{\mathcal{A}_{\alpha_{#1}}}
\newcommand{\apg}[1][]{\mathcal{A}_{\alpha_{#1}, \beta_{#1}}}

\newcommand{\projTan}{\Pi}
\newcommand{\projNor}{\Pi^\perp}
\newcommand{\rstDom}[2]{\left.{#1}\right|_{#2}}                     

\newcommand{\fixMap}[1][]{\mathcal A_{#1}}
\newcommand{\neighbourhood}[1][]{\mathcal N_{#1}}
\newcommand{\neighbourhoodPRM}{U}
\newcommand{\neighbourhoodFM}{V}

\newcommand{\neighbourhoodAPG}[1][]{V_{\alpha_{#1}}}

\newcommand{\PGD}{\mathrm{PGD}}
\newcommand{\APG}{\mathrm{APG}}


\newcommand{\deriv}[1][]{D_{#1}}
\newcommand{\derivx}{\deriv[\x]}
\newcommand{\derivz}{\deriv[\z]}
\newcommand{\derivu}{\deriv[\u]}


\newcommand{\seeAppSec}[2]{#1 Section~\ref{#2} in the appendix}
\newcommand{\appSecLabel}[1]{ssec:#1:proof}
\newcommand{\findProofIn}[1]{\begin{proof}The proof is in Section~\ref{\appSecLabel{#1}} in the appendix.\end{proof}}
\newcommand{\appSubSect}[2]{\subsection{Proof of {#1}~\ref{#2}} \label{\appSecLabel{#2}}}

\newcommand{\crefItms}[2]{%
  \hyperref[#2]{\namecref{#1}~\labelcref*{#1}\ref*{#2}}%
}


\graphicspath{{./figures/}}

\KEYWORDS{Partial Smoothness, Automatic Differentiation, Fixed-Point Algorithm, Bilevel Learning}
\captionsetup[figure]{skip=4pt}

\begin{document}

\thispagestyle{empty}
\begin{center}
\vspace*{0.03\paperheight}
{\Large\bf Automatic Differentiation of Optimization \\[0.8ex] Algorithms with Time-Varying Updates \par}
\bigskip
\bigskip
{\large
Sheheryar Mehmood$^\star$ and Peter Ochs$^\dagger$ \\ \medskip
{\small
$^\star$~University of T\"{u}bingen, T\"{u}bingen, Germany \\
$^\dagger$~Saarland University, Saarbr\"{u}cken, Germany \\
}
}
\end{center}
\bigskip

\begin{abstract}
    Numerous Optimization Algorithms have a time-varying update rule thanks to, for instance, a changing step size, momentum parameter or, Hessian approximation. In this paper, we apply unrolled or automatic differentiation to a time-varying iterative process and provide convergence (rate) guarantees for the resulting derivative iterates. We adapt these convergence results and apply them to proximal gradient descent with variable step size and FISTA when solving partly smooth problems. We confirm our findings numerically by solving $\ell_1$ and $\ell_2$-regularized linear and logisitc regression respectively. Our theoretical and numerical results show that the convergence rate of the algorithm is reflected in its derivative iterates.
\end{abstract}



\section{Introduction} \label{sec:intro}

For some parameter $\u\in\spPrm$, we consider the following parametric iterative process
\begin{equation} \tag{$\mathcal{R}$} \label{itr:FPI}
    \xkp (\u) \coloneqq \fixMap (\xk (\u), \u) \,,
\end{equation}
for $k\geq0$, where $\xz\in\spVar$ is the initial iterate and $\map{\fixMap}{\spVar\times\spPrm}{\spVar}$ is the update mapping. The iterates $\xk (\u)$ generated by \eqref{itr:FPI} depend on $\u$ due to the dependence of $\fixMap$ on $\u$. The goal of performing the iterations \eqref{itr:FPI} is mainly to solve the non-linear equation
\begin{equation} \tag{$\mathcal{R}_\mathrm{e}$} \label{eq:FPE}
    \x = \fixMap (\x, \u) \,,
\end{equation}
with respect to $\x$ for each parameter $\u$. A simple example is gradient descent with appropriate step size $\alpha>0$ where we define $\fixMap (\x, \u) \coloneqq \x - \alpha \grad[\x] F (\x, \u)$ for solving a parametric optimization problem of the form
\begin{equation} \tag{$\mathcal{P}$} \label{prob:min}
    \min_{\x\in\spVar} \ F (\x, \u) \,,
\end{equation}
with a smooth objective $\map{F}{\spVar\times\spPrm}{\R}$. In this case \eqref{eq:FPE} reduces to the optimality condition $\grad[\x] F (\x, \u) = 0$.

Often in practice, for some smooth $\map{\ell}{\spVar\times\spPrm}{\R}$, we encounter problems of the form
\begin{equation} \label{prob:min:upper}
    \min_{\u\in\spPrm} \ \ell (\argmap (\u), \u) \,.
\end{equation}
where, given any $\u\in\spPrm$, $\argmap (\u)$ is a solution of \eqref{eq:FPE} or \eqref{prob:min}. This kind of optimization, formally known as bilevel optimization \cite{DKP+15, DZ20} has emerged as a crucial tool in machine learning, playing a foundational role in various applications such as hyperparameter optimization \cite{Ben00, Dom12, MDA15a}, implicit neural networks \cite{AK17, AAB+19, BKK19, BKK20, WK20}, meta-learning \cite{HAM+21}, and neural architecture search \cite{LSY19}.

Solving \eqref{prob:min:upper} through gradient-based methods requires computing the gradient of the upper objective $\ell (\argmap (\u), \u)$ with respect to $\u$ which, in turn, requires computing the derivative of the solution mapping $\u\mapsto\argmap (\u)$, that is, $\deriv \argmap (\u)$ by using the chain rule. When $\ell$ and $F$ (or $\fixMap$) satisfy certain smoothness and regularity assumptions, this can be achieved by two different methods. (i) One is known as \textit{Implicit Differentiation} (ID) which leverages the classical Implicit Function Theorem or IFT
\cite[Theorem~1B.1]{DR09} applied to \eqref{eq:FPE} to estimate
\begin{equation} \label{eq:FPE:IFT:intro}
    \begin{aligned}
        \deriv \argmap (\u) &= \left(\opid - \derivx \fixMap (\x, \u)\right)^{-1} \derivx \fixMap (\x, \u) \\
        &= -\Hess[\x] F (\x, \u)^{-1} \derivu \grad[x] F (\x, \u) \,,
    \end{aligned}
\end{equation}
where $\x\coloneqq\argmap (\u)$ (see Theorem~\ref{thm:IFT:FPE} for more details). The key assumption for IFT is the strict bound on the spectral radius $\rho (\derivx \fixMap (\argmap (\u), \u)) < 1$, which we call the contraction property since it is a sufficient condition for $\fixMap$ to be $1$-Lipschitz or contractive. A direct consequence is the linear convergence of $\xk (\u)$ to $\argmap (\u)$ \cite[Section~2.1.2,~Theorem~1]{Pol87}.
The output of ID \eqref{eq:FPE:IFT:intro} depends on accurately solving \eqref{eq:FPE} --- possibly through \eqref{itr:FPI} --- and inverting the linear system efficiently. (ii) The other technique is known as \textit{Unrolled, Iterative or Automatic Differentiation} (AD) or simply \textit{Unrolling} where $\deriv \argmap (\u)$ is estimated by applying automatic differentiation \cite{Wen64, Lin70, GW08} to the output $\xK (\u)$ of \eqref{itr:FPI} after $K\in\N$ iterations. The update rule for forward mode AD applied to $\xK$ is given by
\begin{equation} \tag{$\mathcal{DR}$} \label{itr:D:FPI}
    \deriv \xkp (\u) \coloneqq \derivx \fixMap (\xk (\u), \u) \deriv \xk (\u) + \derivu \fixMap (\xk (\u), \u) \,,
\end{equation}
for $0\leq k< K$ where we may assume $\deriv \xz(\u) = 0$. The effective use of AD relies on the guarantee that $\deriv \xK (\u)$ accurately estimates $\deriv\argmap (\u)$ for a preferably small $K$. More formally, we require the convergence of $\deriv \xK (\u)$ to $\deriv\argmap (\u)$ as $K\to\infty$, with a convergence rate comparable to that of $\xk (\u)$. The reverse mode AD or Backpropagation \cite{RHW86, BPR+18} has a different update rule, however, the result computed is the same in the end \cite{GW08}. Due to the nature of the iterations \eqref{itr:D:FPI}, we will use $\xK$ and $\xk$ interchangeably.

The reverse mode AD has a memory overhead since it requires storing the iterates $\seq[0\leq k < K]{\xk}$. Even though this is in contrast with ID which only depends on $\xK$, AD is still a popular choice for estimating $\deriv \argmap (\u)$. A crucial advantage of AD is that it provides a nice blackbox implementation thanks to the powerful autograd libraries included in PyTorch \cite{PGM+19}, TensorFlow \cite{ABC+16}, and JAX \cite{BFH+18} whereas ID requires either a custom implementation \cite[see~Remark~2]{BPV23} or using special libraries \cite[etc.]{BBC+22, RFL+22}.

\subsection{Convergence of Automatic Differentiation} \label{ssec:intro:RW}

The convergence of the derivative sequence $\deriv \xk (\u)$ for AD of \eqref{itr:FPI} were studied classically for linear \cite{Fis91} and general iteration maps \cite{Gil92}. More recently, the convergence analysis was provided for gradient descent \cite{LVD20, MO20}, the
Heavy-ball method \cite{MO20}, proximal gradient descent for lasso-like problems \cite{BKM+22}, the Sinkhorn-Knopp algorithm \cite{PV23}, and super-linearly convergent algorithms \cite{BPV23}. The convergence results were extended to non-smooth iterative processes in \cite{BPV22} using conservative calculus \cite{BP20, BP21}. Ablin et al.~\cite{APM20} theoretically established the better convergence rate for computing the gradient of the value function through AD and ID. Usually, the proof of convergence of $\deriv \xk (\u)$ relies on the contraction property in each of the above works with an exception of \cite{PV23}. The linear rate of convergence requires (local) Lipschitz continuity of $\deriv \fixMap$. A slightly different line of study on the problematic behaviour of automatic differentiation in the earlier iterations was carried out in \cite{SGB+22}.

\subsection{Iteration-Dependent Updates} \label{ssec:intro:FPI:k}

Most iterative algorithms for solving \eqref{prob:min} which include gradient descent with line search, Nesterov's accelerated gradient \cite{Nes83}, and Quasi-Newton methods \cite[etc.]{Dav59, FP63, Bro70, Fle70} cannot be modelled by \eqref{itr:FPI}. This happens because the update mapping is changing at every iteration; $\fixMap$ in \eqref{itr:FPI} is replaced by $\map{\fixMap[k]}{\spVar\times\spPrm}{\spVar}$ giving us the update
\begin{equation} \tag{$\mathcal{R}_k$} \label{itr:FPI:k}
    \xkp (\u) \coloneqq \fixMap[k] (\xk (\u), \u) \,.
\end{equation}
For example, for gradient descent with line search, $\fixMap[k] (\x, \u) \coloneqq \x - \alpha_k \grad[\x] F (\x, \u)$. When $\fixMap[k]$ is $C^1$-smooth for all $k\in\N$, the modified update rule for performing forward mode AD on \eqref{itr:FPI:k} reads:
\begin{equation} \tag{$\mathcal{DR}_k$} \label{itr:D:FPI:k}
    \deriv \xkp (\u) \coloneqq \derivx \fixMap[k]  (\xk (\u), \u) \deriv \xk (\u) + \derivu \fixMap[k] (\xk (\u), \u) \,.
\end{equation}

Beck~\cite{Bec94} studied this problem for the $C^1$-smooth sequence of functions $\fixMap[k]$ with uniformly bounded derivatives and $\deriv \fixMap[k]$ converging to $\deriv \fixMap$ pointwise. They showed that for such a sequence, when $\seq[k\in\N]{\xk (\u)}$ is generated by \eqref{itr:FPI:k} to solve the system \eqref{eq:FPE}, then $\seq[k\in\N]{\deriv\xk (\u)}$ generated by \eqref{itr:D:FPI:k} converges to $\deriv \argmap (\u)$ defined in \eqref{eq:FPE:IFT:intro}. \textit{However, their results cannot account for the linear convergence of $\deriv\xk (\u)$ in their given setting (see Section~\ref{ssec:pre:FPI:k:Beck:supp}). Moreover, their setting excludes the more general cases where $\fixMap[k]$ does not converge, for instance, gradient descent with line search.}

Griewank et al.~\cite{GBC+93} studied the special setting for solving the non-linear system $G (\x, \u) = 0$ with respect to $\x$ through preconditioned Picard iterations, that is, $\fixMap[k] (\x, \u) \coloneqq \x - P_k (\u) G (\x, \u)$. The parameter-dependent preconditioner $\map{P_k}{\spPrm}{\spLin (\spVar, \spVar)}$ is changing at every iteration and may not converge. They demonstrated linear convergence of $\deriv\xk (\u)$ for a $C^1$-smooth $\map{G}{\spVar\times\spPrm}{\spVar}$ with bounded maps $\derivx G$ and $(\x, \u)\mapsto \derivx G (\x, \u)^{-1}$ and Lipschitz continuous map $\deriv G$. The map $P_k$ is assumed to be $C^1$-smooth with bounded (potentially $0$) derivative for every $k\in\N$.

Recently in \cite{Rii20, MO24}, AD was applied to \eqref{itr:FPI:k} when $\fixMap[k]$ denotes the update mapping of FISTA \cite{BT09} (which we will call Accelerated Proximal Gradient or APG) applied to \eqref{prob:min} for non-smooth $F \coloneqq f + g$ with step size $\alpha_k$ and extrapolation parameter $\beta_k$. They adapted the results of \cite{Bec94} and demonstrated the convergence of $\deriv \xk (\u)$ to $D\argmap (\u)$ when $f$ is $C^2$-smooth with a Lipschitz continuous gradient, $g$ is partly smooth \cite{Lew02}, the sequences $\seq[k\in\N]{\alpha_k}$ and $\seq[k\in\N]{\beta_k}$ converge and $F$ additionally satisfies restricted injectivity and non-degeneracy conditions \cite{Lew02}. $\deriv \argmap (\u)$ is computed by applying the Implicit Function Theorem for partly smooth functions \cite[Theorem~5.7]{Lew02}. Partly smooth functions are a class of special non-smooth functions which exhibit smoothness when restricted to a low-dimensional manifold $\manif$ and model various practical regularization functions \cite{VDP+17}. The non-smooth or sharp behaviour occurs only when we move orthogonal to $\manif$. The combination of restricted injectivity and non-degeneracy assumptions induces the contraction property in proximal gradient descent or PGD \cite{LFP14} and APG \cite{LFP17} near the solution. \textit{The results of \cite{Rii20, MO24} do not explain the linear convergence of AD of APG or even the convergence of AD of PGD with non-converging variable step size.}

\subsection{Contributions} \label{ssec:intro:cont}

In this paper, we aim to resolve the theoretical gaps presented in Section~\ref{ssec:intro:FPI:k} and therefore reinforce the usefulness of AD. In particular, the main contributions of this paper are the following:
\begin{enumerate}[label=(\roman*)]
    \item \label{itm:cont:Beck:rates} We strengthen the results of \cite{Bec94} by providing a convergence rate analysis for $\deriv \xk (\u)$ generated by \eqref{itr:D:FPI:k} after equipping their setting with an additional assumption that is satisfied by most optimization algorithms (see Assumption~\ref{ass:basic:modified}\ref{itm:basic:modified:fixed:point} and Remark~\ref{rem:basic:modified}).
    \item \label{itm:cont:NoLim:conv:rates} We establish (linear) convergence of $\deriv \xk (\u)$ beyond the setting in \ref{itm:cont:Beck:rates} without the pointwise convergence condition of $\fixMap[k]$ to $\fixMap$ removed (see Assumption~\ref{ass:basic:modified}\ref{itm:basic:modified:fixed:point} and \ref{ass:basic:modified}\ref{itm:basic:modified:spectral:norm}).
    \item \label{itm:cont:PGD:APG:conv:rates} We demonstrate the convergence of $\deriv\xk (\u)$ for Proximal Gradient Descent \cite{LM79} or PGD with variable step size and Accelerated Proximal Gradient \cite{BT09} or APG by adapting the results for \ref{itm:cont:NoLim:conv:rates} and \ref{itm:cont:Beck:rates} respectively. We show that the rate of convergence of $\xk (\u)$ is reflected in that of $\deriv \xk (\u)$ for both algorithms.
\end{enumerate}

\subsection{Bilevel Optimization beyond Automatic and Implicit Differentiation}
It is worthwhile to point out a different method for solving \eqref{prob:min:upper} in addition to AD and ID. The idea is to rewrite the problem by incorporating the value function through an inequality constraint \cite{DMN10, LYW+22, SC23}. This reduces the nested problem into a single problem which is then solved through penalty-based methods. This technique is sometimes referred to as single-loop or Hessian-free approach and has been shown to give promising results \cite{KKW+23}.

\subsection{Notation}
In this paper, $\spVar$ and $\spPrm$ are Euclidean spaces and $\eR\coloneqq[-\infty, \infty]$. With a slight abuse of notation, $\norm{\cdot}$ denotes any norm on these spaces including the one induced by the inner product $\scal{\cdot}{\cdot}$. Any operator norm on the space of linear operators between $\spVar$ and $\spPrm$ is also denoted by $\norm{\cdot}$. We say that $\wk = \o (\zk)$ iff $\norm{\wk} = \o (\norm{\zk})$ and $\wk = \O (\zk)$ iff $\norm{\wk} = \O (\norm{\zk})$. 
We use standard notions of Riemannian Geometry and partly smooth functions found in \cite{Lew02,Lia16,VDP+17}. A $C^2$-smooth embedded submanifold $\manif\subset\spVar$ of $\spVar$ is simply referred to as a manifold. For any $\x\in\manif$, $\projTan (\x) \coloneqq \proj{\spTan{\x}}$ and $\projNor (\x) \coloneqq \proj{\spNor{\x}}$ denote the orthogonal projection onto $\spTan{\x}$ and $\spNor{\x}$, the tangent and normal spaces respectively and for any $\v\in\spTan{\x}$, $\wein[\x]{\cdot}{\v}$ represents the Weingarten map. For any $\alpha>0$ and proper and lower semi-continuous $\map{g}{\spVar\times U}{\eR}$ where $g(\cdot, \u)$ is convex for all $\u\in U \subset \spPrm$, we define $\map{\bwd}{\spVar\times U}{\spVar}$ by $\bwd (\cdot, \u) = \prox{\alpha}{g(\cdot, \u)}$, that is,
\begin{equation} \label{eq:prox}
    \bwd (\w, \u) \coloneqq \argmin_{\x\in\spVar} \alpha g(\x, \u) + \frac{1}{2} \norm{\x - \w}^2 \,.
\end{equation}
We use the terminology of \cite{RW98} for the subdifferential of $g$. When $U$ is open and for any $\x \in\spVar$, $g(\x, \cdot)$ is differentiable at $\u\in U$, we obtain $\partial g (\x, \u) = \partial_{\x} g (\x, \u) \times \set{\grad[\u] g (\x, \u)} $ \cite[Lemma~1]{MO24}.
When $\rstDom{g}{\manif\times U}$ is $C^2$-smooth, we follow \cite[Remark~22(iv)]{MO24} to denote its Riemannian gradient and Hessian.

\section{Differentiation of Iteration-Dependent Algorithms} \label{sec:FPI:k}
In this section, we lay down the assumptions on the sequence $\seq[k\in\N]{\fixMap[k]}$ and provide convergence and convergence rate guarantees for the derivative iterations defined in \eqref{itr:D:FPI:k}. In particular we establish (i) convergence rates when $\fixMap[k]$ has a pointwise limit $\fixMap$, and (ii) convergence and convergence rates when $\fixMap[k]$ does not have a pointwise limit. For a better comparison and understanding, this section is adapted to follow the same pattern as Section~\ref{ssec:pre:FPI:k:Beck:supp}, which summarizes the results by \cite{Bec94}.

\subsection{Problem Setting} \label{ssec:FPI:k:PS}
For a given $\givenPrm$, we assume that $\xmin$ is the solution that we are trying to estimate through \eqref{itr:FPI:k}. Because the pointwise limit $\fixMap$ of $\fixMap[k]$ may not exist, we assume that $\xmin$ is a fixed point of $\fixMap[k](\cdot, \givenPrm$) for every $k\in\N$. This seemingly restrictive assumption is satisfied by the optimization algorithms highlighted in Section~\ref{ssec:intro:FPI:k}. The application of the Implicit Function Theorem on this sequence of fixed-point equations requires the contraction property, that is, the strict bound on some operator norm $\norm{\derivx \fixMap[k] (\xmin, \givenPrm)} < 1$ eventually for all $k\in\N$. Furthermore, we assume that the sequence of affine maps $\seq[k\in\N]{X \mapsto \derivx\fixMap[k](\xmin, \givenPrm) X + \derivu\fixMap[k](\xmin, \givenPrm)}$ share a common fixed-point $X_*\in\spLin (\spVar, \spVar)$ --- the reason will become clear shortly. Finally, to show the convergence of the derivative sequence $\seq[k\in\N]{\deriv\xk (\givenPrm)}$, we assume that the sequence $\seq[k\in\N]{\xk (\givenPrm)}$ converges to $\xmin$. More formally, given $(\xz, \xmin, \givenPrm, X_*) \in \spVar\times\spVar\times\spPrm\times\spLin (\spVar, \spVar)$, $\neighbourhoodFM \in \neighbourhood[(\xmin, \givenPrm)]$, $\seq[k\in\N_0]{\fixMap[k]}$, and a norm $\norm{\cdot}$ on $\spLin (\spVar, \spVar)$ induced by some vector norm, we assume that the following conditions hold.
\begin{assumption} \label{ass:basic:modified}
    \begin{enumerate}[label=(\roman*)]
        \item \label{itm:basic:modified:C1} $\rstDom{\fixMap[k]}{\neighbourhoodFM}$ is $C^1$-smooth for all $k$,
        \item \label{itm:basic:modified:fixed:point} $\xmin = \fixMap[k] (\xmin, \givenPrm)$ and $X_* = \derivx \fixMap[k] (\xmin, \givenPrm) X_* + \derivu \fixMap[k] (\xmin, \givenPrm) $ for all $k$,
        \item \label{itm:basic:modified:spectral:norm} $\limsup_{k\to\infty} \norm{\derivx \fixMap[k] (\xmin, \givenPrm)} < 1$, and
        \item \label{itm:basic:modified:iterates} $\xk (\givenPrm)$ generated by \eqref{itr:FPI:k} has limit $\xmin$ such that $(\xk (\givenPrm), \givenPrm) \in \neighbourhoodFM$ for all $k$.
    \end{enumerate}
\end{assumption}
\begin{remark} \label{rem:basic:modified}
    A special case of Assumption~\ref{ass:basic:modified}\ref{itm:basic:modified:spectral:norm} arises when for some $C^1$-smooth mapping $\map{\fixMap}{\neighbourhoodFM}{\spVar}$, we have $\xmin = \fixMap (\xmin, \givenPrm)$, $\deriv \fixMap[k] (\xmin, \givenPrm) \to \deriv \fixMap (\xmin, \givenPrm)$ and $\rho (\derivx \fixMap (\xmin, \givenPrm)) < 1$.
\end{remark}

\subsection{Implicit Differentiation}
Combining Assumptions~\ref{ass:basic:modified}(\ref{itm:basic:modified:C1}--\ref{itm:basic:modified:spectral:norm}) allows us to invoke the Implicit Function Theorem on $\xmin = \fixMap[k] (\xmin, \givenPrm)$ for every $k$ large enough and obtain a $C^1$-smooth fixed-point map $\argmap_{k}$ on a neighbourhood $\neighbourhoodPRM_{k} \in \neighbourhood[\givenPrm]$. In particular, we have the following result.
\begin{lemma} \label{lem:IFT:FPE:k}
    Let $(\xmin, \givenPrm, X_*) \in \spVar\times\spPrm\times\spLin (\spVar, \spVar)$, $\neighbourhoodFM \in \neighbourhood[(\xmin, \givenPrm)]$ $\seq[k\in\N_0]{\fixMap[k]}$, and $\norm{\cdot}$ be such that Assumption~\ref{ass:basic:modified} (\ref{itm:basic:modified:C1}--\ref{itm:basic:modified:spectral:norm}) is satisfied. Then there exists $K\in\N$ such that $\forall k\geq K$, there exists $\neighbourhoodPRM_k\in\neighbourhood[\givenPrm]$ and a $C^1$-smooth $\map{\argmap_k}{\neighbourhoodPRM_k}{\spVar}$ such that $\forall \u\in \neighbourhoodPRM_k$, $\argmap_k(\u) = \fixMap[k](\argmap_k(\u), \u)$, and
    \begin{equation} \label{eq:FPE:IFT:k}
        \deriv\argmap_k(\u) = \big( \opid - \derivx \fixMap[k] (\argmap_k(\u), \u) \big)^{-1} \derivu \fixMap[k] (\argmap_k(\u), \u) \,,
    \end{equation}
    so that $\deriv \argmap_{k} (\givenPrm) = X_*$.
\end{lemma}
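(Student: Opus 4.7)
The plan is to apply the classical Implicit Function Theorem (IFT) separately to each equation $\x = \fixMap[k](\x, \u)$ for every sufficiently large $k$, and then identify $\deriv\argmap_k(\givenPrm)$ with the prescribed $X_*$ using Assumption~\ref{ass:basic:modified}\ref{itm:basic:modified:fixed:point}. Since the conclusion is an existence and formula statement, the entire argument reduces to three routine steps: establishing eventual invertibility of $\opid - \derivx\fixMap[k](\xmin,\givenPrm)$, invoking IFT for each $k$, and matching the resulting derivative with $X_*$.

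First, I would use Assumption~\ref{ass:basic:modified}\ref{itm:basic:modified:spectral:norm} to choose $K\in\N$ and some $q\in(0,1)$ such that $\norm{\derivx \fixMap[k](\xmin, \givenPrm)}\leq q$ for all $k\geq K$. Since $\norm{\cdot}$ is an operator norm induced by a vector norm and is therefore submultiplicative, the Neumann series $\sum_{n\geq 0}\derivx\fixMap[k](\xmin,\givenPrm)^n$ converges, giving invertibility of $\opid - \derivx\fixMap[k](\xmin, \givenPrm)$ for every $k\geq K$ together with the uniform bound $\norm{(\opid - \derivx\fixMap[k](\xmin, \givenPrm))^{-1}}\leq (1-q)^{-1}$. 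This uniform bound is also convenient for later parts of the paper.

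Next, I would fix $k\geq K$ and apply IFT to $\Phi_k(\x,\u)\coloneqq \x - \fixMap[k](\x,\u)$ on $\neighbourhoodFM$. Assumption~\ref{ass:basic:modified}\ref{itm:basic:modified:C1} gives $C^1$-smoothness of $\Phi_k$, Assumption~\ref{ass:basic:modified}\ref{itm:basic:modified:fixed:point} gives $\Phi_k(\xmin,\givenPrm) = 0$, and the previous step gives invertibility of $\derivx \Phi_k(\xmin,\givenPrm) = \opid - \derivx\fixMap[k](\xmin,\givenPrm)$. The IFT (e.g., \cite[Theorem~1B.1]{DR09}) then supplies $\neighbourhoodPRM_k\in\neighbourhood[\givenPrm]$ and a unique $C^1$-smooth $\map{\argmap_k}{\neighbourhoodPRM_k}{\spVar}$ with $\argmap_k(\givenPrm)=\xmin$ and $\argmap_k(\u)=\fixMap[k](\argmap_k(\u),\u)$ for every $\u\in\neighbourhoodPRM_k$. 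Differentiating this fixed-point identity in $\u$ and isolating $\deriv\argmap_k(\u)$ yields \eqref{eq:FPE:IFT:k}.

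Finally, to match $\deriv\argmap_k(\givenPrm)$ with $X_*$, I would evaluate \eqref{eq:FPE:IFT:k} at $\u=\givenPrm$, giving $\deriv\argmap_k(\givenPrm) = (\opid - \derivx\fixMap[k](\xmin,\givenPrm))^{-1}\derivu\fixMap[k](\xmin,\givenPrm)$, and rewrite Assumption~\ref{ass:basic:modified}\ref{itm:basic:modified:fixed:point} as $(\opid - \derivx\fixMap[k](\xmin,\givenPrm))X_* = \derivu\fixMap[k](\xmin,\givenPrm)$. Multiplying by the inverse identifies the two expressions. The whole proof is essentially bookkeeping around a standard IFT invocation; the only step requiring mild care is the uniform choice of $K$ ensuring invertibility for all $k\geq K$, which is precisely what Assumption~\ref{ass:basic:modified}\ref{itm:basic:modified:spectral:norm} is designed to give. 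I therefore do not anticipate a genuine obstacle beyond this bookkeeping.
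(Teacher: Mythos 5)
Your proof is correct, and it follows the same essential strategy as the paper: choose $K$ via Assumption~\ref{ass:basic:modified}\ref{itm:basic:modified:spectral:norm} so that $\opid - \derivx\fixMap[k](\xmin,\givenPrm)$ is invertible with a uniform Neumann-series bound, invoke the classical IFT (Theorem~\ref{thm:IFT:FPE}) using Assumptions~\ref{ass:basic:modified}\ref{itm:basic:modified:C1}--\ref{itm:basic:modified:fixed:point}, and then identify $\deriv\argmap_k(\givenPrm)$ with $X_*$ by rewriting the shared fixed-point equation for $X_*$ and multiplying through by the inverse. The one bookkeeping difference: you apply IFT separately for each $k\geq K$, yielding a genuinely $k$-dependent pair $(\neighbourhoodPRM_k,\argmap_k)$, whereas the paper applies IFT once to a single $\fixMap[n]$ with $n\geq K$ and then argues the resulting $\argmap$ also solves $\argmap(\u) = \fixMap[k](\argmap(\u),\u)$ for all $k\geq K$ on a (reduced) neighbourhood. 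Your per-$k$ version is the more literal match to the statement, which only asserts the existence of possibly $k$-dependent $\argmap_k$, and it avoids needing the stronger shared-fixed-point condition that is only formalized later in Remark~\ref{rem:IFT:FPE:k}; the paper's version is slightly shorter if one is willing to invoke that stronger reading, and it sets up the shared-$\argmap$ conclusion recorded in that remark. Both routes deliver the claimed formula~\eqref{eq:FPE:IFT:k} and the identity $\deriv\argmap_k(\givenPrm) = X_*$ by the same algebraic step.
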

\findProofIn{lem:IFT:FPE:k}
\begin{remark} \label{rem:IFT:FPE:k}
    If we replace Assumptions~\ref{ass:basic:modified}\ref{itm:basic:modified:fixed:point} and \ref{ass:basic:modified}\ref{itm:basic:modified:spectral:norm} by a slightly stronger assumption, that is, ``\textit{there exists $\neighbourhoodPRM\in\neighbourhood[\givenPrm]$ such that for any $(\x, \u)\in \spVar\times\neighbourhoodPRM$, $\x = \fixMap[k] (\x, \u)$ for some $k\in\N$ implies $\x = \fixMap[k] (\x, \u)$ for every $k\in\N$, and $\limsup_{k\to\infty} \norm{\derivx \fixMap[k] (\x, \u)} < 1$}'', we obtain a shared $C^1$-smooth fixed-point map $\map{\argmap}{\neighbourhoodPRM}{\spVar}$, such that, $\neighbourhoodPRM_k = \neighbourhoodPRM$ and $\argmap_k = \argmap$ for all $k\in\N$.
\end{remark}

\subsection{Automatic Differentiation}
In Lemma~\ref{lem:IFT:FPE:k}, we just showed that under Assumption~\ref{ass:basic:modified}, the common fixed-point $X_*$ of the mappings $X \mapsto \derivx\fixMap[k](\xmin, \givenPrm) X + \derivu\fixMap[k](\xmin, \givenPrm)$ for $k\in\N$ represents the derivative of the solution at $\u = \givenPrm$. We now show that the derivative sequence generated by \eqref{itr:D:FPI:k} for $\u = \givenPrm$ converges to $X_*$.
Instead of assuming a pointwise limit for $\deriv \fixMap[k]$, we assert that $\deriv \fixMap[k] (\xk (\givenPrm), \givenPrm) - \deriv \fixMap[k] (\xmin, \givenPrm)$ tends to $0$.
\begin{assumption} \label{ass:convergence:modified}
    The sequence $\seq[k\in\N_0]{\deriv\fixMap[k] (\xk (\givenPrm), \givenPrm) - \deriv\fixMap[k] (\xmin, \givenPrm)}$ converges to $0$.
\end{assumption}
\begin{remark} \label{rem:convergence:modified}
    \begin{enumerate}[label=(\roman*)]
        \item A sufficient condition for Assumption~\ref{ass:convergence:modified} is
        equicontinuity of $\seq[k\in\N_0]{\fixMap[k]}$ at $(\xmin, \givenPrm)$.
        \item Combining Assumption~\ref{ass:convergence:modified} with the special case of Remark~\ref{rem:basic:modified}, we obtain the setting of \cite{Bec94}, that is, $\deriv\fixMap[k] (\xk (\givenPrm), \givenPrm) - \deriv\fixMap (\xmin, \givenPrm) \to 0$ (\seeAppSec{see also}{ssec:pre:FPI:k:Beck:supp}).
    \end{enumerate}
\end{remark}
We are now ready to establish the convergence of the derivative sequence $\seq[k\in\N]{\deriv\xk (\givenPrm)}$ to $X_*$ when Assumptions~\ref{ass:basic:modified} and \ref{ass:convergence:modified} hold.
\begin{theorem} \label{thm:D:FPI:modified:conv}
    Let $(\xz, \xmin, \givenPrm) \in \spVar\times\spVar\times\spPrm$, $\neighbourhoodFM \in \neighbourhood[(\xmin, \givenPrm)]$, and $\seq[k\in\N_0]{\fixMap[k]}$ be such that Assumptions~\ref{ass:basic:modified} and \ref{ass:convergence:modified} are satisfied. Then the sequence $\seq[k\in\N_0]{\deriv\xk (\givenPrm)}$ generated by \eqref{itr:D:FPI:k} converges to $X_*$ which is equal to $\left(\opid - \derivx \fixMap[k] (\xmin, \givenPrm) \right)^{-1} \derivu \fixMap[k] (\xmin, \givenPrm)$ for all $k\in\N$ (see Assumption~\ref{ass:basic:modified}\ref{itm:basic:modified:fixed:point}).
\end{theorem}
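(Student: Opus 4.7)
The natural object to control is the error $E_k := \deriv \xk(\givenPrm) - X_*$. Writing $\tilde A_k := \derivx \fixMap[k](\xk(\givenPrm), \givenPrm)$, $\tilde B_k := \derivu \fixMap[k](\xk(\givenPrm), \givenPrm)$, and similarly $A_k$, $B_k$ for the same quantities evaluated at $(\xmin, \givenPrm)$, the update \eqref{itr:D:FPI:k} and the common fixed-point identity of Assumption~\ref{ass:basic:modified}\ref{itm:basic:modified:fixed:point} give respectively
\[
\deriv \xkp(\givenPrm) = \tilde A_k \deriv \xk(\givenPrm) + \tilde B_k, \qquad X_* = A_k X_* + B_k.
\]
Subtracting, I obtain the affine recursion
\[
E_{k+1} = \tilde A_k E_k + \eta_k, \qquad \eta_k := (\tilde A_k - A_k) X_* + (\tilde B_k - B_k).
\]

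The plan is then to show that (i) $\eta_k \to 0$ and (ii) $\tilde A_k$ is eventually a strict contraction in the operator norm. For (i), Assumption~\ref{ass:convergence:modified} states exactly that both blocks $\tilde A_k - A_k$ and $\tilde B_k - B_k$ vanish as $k\to\infty$, so submultiplicativity of the induced norm yields $\norm{\eta_k}\le \norm{\tilde A_k - A_k}\,\norm{X_*} + \norm{\tilde B_k - B_k} \to 0$. For (ii), Assumption~\ref{ass:basic:modified}\ref{itm:basic:modified:spectral:norm} gives $\limsup_k\norm{A_k} < 1$; combined with $\norm{\tilde A_k - A_k}\to 0$, this yields some $q \in (0,1)$ and $K\in\N$ with $\norm{\tilde A_k}\le q$ for all $k\ge K$.

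From these two facts, submultiplicativity delivers the scalar recursion $\norm{E_{k+1}}\le q\,\norm{E_k} + \norm{\eta_k}$ for $k\ge K$. Unrolling gives
\[
\norm{E_{k+1}} \le q^{k-K+1} \norm{E_K} + \sum_{j=K}^{k} q^{k-j}\norm{\eta_j},
\]
and a standard "geometric series with vanishing forcing term" argument (split the sum at an index beyond which $\norm{\eta_j}<\varepsilon(1-q)$, bound the tail by $\varepsilon$, and let the head decay geometrically) shows that $\limsup_k \norm{E_k} \le \varepsilon$ for every $\varepsilon>0$, so $E_k\to 0$, i.e.\ $\deriv\xk(\givenPrm) \to X_*$. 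The identification $X_* = (\opid - \derivx\fixMap[k](\xmin, \givenPrm))^{-1}\derivu\fixMap[k](\xmin, \givenPrm)$ for each $k\ge K$ is immediate from Assumption~\ref{ass:basic:modified}\ref{itm:basic:modified:fixed:point} once $\opid - A_k$ is invertible, which is guaranteed by $\norm{A_k}<1$ (this was already recorded in Lemma~\ref{lem:IFT:FPE:k}).

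The only conceptual point that needs care — and the reason the hypothesis is weaker than Beck's — is that no pointwise limit $\fixMap$ of $\fixMap[k]$ is used; all that is needed is a \emph{common} fixed point $\xmin$ and a \emph{common} fixed point $X_*$ of the linearized maps, plus the uniform contraction along the sequence. This makes the main obstacle less an obstacle than a bookkeeping issue: one must carefully distinguish $\tilde A_k$ (linearization at the iterate) from $A_k$ (linearization at $\xmin$), because only the latter is guaranteed to admit the identity $X_* = A_k X_* + B_k$, whereas only the former drives the recursion for $E_k$; reconciling the two is precisely what the perturbation term $\eta_k$ absorbs.
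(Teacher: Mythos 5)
Your proof is correct and follows essentially the same strategy as the paper: subtract the common fixed-point identity $X_* = A_k X_* + B_k$ from the AD recursion to get an affine error recursion, show the additive perturbation vanishes by Assumption~\ref{ass:convergence:modified}, show the multiplicative coefficient is eventually a strict contraction by Assumption~\ref{ass:basic:modified}\ref{itm:basic:modified:spectral:norm}, and conclude via a geometric-series argument. The only organizational difference is that you absorb the correction $\tilde A_k - A_k$ directly into $\tilde A_k$ (so your contraction sits at the iterate, not at $\xmin$), whereas the paper keeps $B_k = A_k$ and the correction term $C_k\ek$ separate in order to invoke its general-purpose Theorem~\ref{thm:LGFPI:modified:conv}; since $\limsup_k\norm{\tilde A_k} \le \limsup_k\norm{A_k} + \limsup_k\norm{\tilde A_k - A_k} = \limsup_k\norm{A_k} < 1$, the two bookkeepings are equivalent.
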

\findProofIn{thm:D:FPI:modified:conv}
The derivative iterates $\deriv \xk (\givenPrm)$ additionally converge with a linear rate when the sequences $\xk (\givenPrm)$ and $\deriv \fixMap[k] (\xk (\givenPrm), \givenPrm) - \deriv \fixMap[k] (\xmin, \givenPrm)$ converge linearly.
\begin{assumption} \label{ass:linear:convergence:modified}
    The sequence $\seq[k\in\N_0]{\xk (\givenPrm)}$ converges linearly to $\xmin$ and
    \begin{equation} \label{ineq:Lipschitz:Like:modified}
        \deriv\fixMap[k] (\xk (\givenPrm), \givenPrm) - \deriv\fixMap[k] (\xmin, \givenPrm) = \O (\xk (\givenPrm) - \xmin) \,.
    \end{equation}
\end{assumption}
\begin{remark}
    A sufficient condition for Assumption~\ref{ass:linear:convergence:modified} is the Lipschitz continuity of $\seq[k\in\N_0]{\deriv\fixMap[k]}$ in $\x$ uniformly in $\u$ and $k$.
\end{remark}
\begin{theorem} \label{thm:D:FPI:modified:conv:linear}
    Let $(\xz, \xmin, \givenPrm) \in \spVar\times\spVar\times\spPrm$, $\neighbourhoodFM \in \neighbourhood[(\xmin, \givenPrm)]$, and $\seq[k\in\N_0]{\fixMap[k]}$ be such that Assumptions~\ref{ass:basic:modified} and \ref{ass:linear:convergence:modified} are satisfied. Then the sequence $\seq[k\in\N_0]{\deriv\xk (\givenPrm)}$ generated by \eqref{itr:D:FPI:k} converges linearly to $X_*$. In particular, for all $\delta \in (0, 1 - \rho)$, there exist $C_1(\delta)$, $C_2(\delta)$ and $K\in\N$, such that for all $k\geq K$, we have
    \begin{equation} \label{eq:D:FPI:modified:conv_rates}
        \begin{aligned}
            \norm{\deriv \xk (\givenPrm) - X_*} &\leq C_1 (\delta) (k-K) q^{k-K} \\ &+ C_2 (\delta) (\rho + \delta)^{k-K} \,,
        \end{aligned}
    \end{equation}
    where $\rho \coloneqq \rho (\derivx \fixMap (\xmin, \givenPrm))$, $q \coloneqq \max(\rho + \delta, q_{\x})$, and $q_{\x} < 1$ is the convergence rate of the sequence $\seq[k\in\N_0]{\xk}$.
\end{theorem}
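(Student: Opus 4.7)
The plan is to convert the derivative recursion \eqref{itr:D:FPI:k} into an affine recursion for the error $E_k \coloneqq \deriv \xk (\givenPrm) - X_*$. Subtracting the fixed-point identity $X_* = \derivx\fixMap[k](\xmin,\givenPrm)X_* + \derivu\fixMap[k](\xmin,\givenPrm)$ from Assumption~\ref{ass:basic:modified}\ref{itm:basic:modified:fixed:point} from \eqref{itr:D:FPI:k} yields
\begin{equation*}
    E_{k+1} = A_k E_k + R_k,
\end{equation*}
where $A_k \coloneqq \derivx \fixMap[k] (\xk (\givenPrm), \givenPrm)$ and
\begin{equation*}
    R_k \coloneqq \bigl(\derivx \fixMap[k](\xk(\givenPrm),\givenPrm) - \derivx\fixMap[k](\xmin,\givenPrm)\bigr) X_* + \derivu\fixMap[k](\xk(\givenPrm),\givenPrm) - \derivu\fixMap[k](\xmin,\givenPrm).
\end{equation*}
Since $\deriv\fixMap[k]$ comprises both partial derivatives, \eqref{ineq:Lipschitz:Like:modified} gives $\|R_k\| = \O(\|\xk(\givenPrm) - \xmin\|)$; combined with the linear-convergence assertion in Assumption~\ref{ass:linear:convergence:modified}, there exist constants $C_R,K_0$ with $\|R_k\| \le C_R\, q_{\x}^{k-K_0}$ for $k\ge K_0$.

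Next, I would control the multiplicative factor $\|A_k\|$. Assumption~\ref{ass:basic:modified}\ref{itm:basic:modified:spectral:norm} gives $\limsup_k \|\derivx\fixMap[k](\xmin,\givenPrm)\| < 1$, and after passing (via Householder's lemma) to a vector norm for which this operator norm is within $\delta/2$ of the spectral radius $\rho$, there is $K_1$ with $\|\derivx\fixMap[k](\xmin,\givenPrm)\| \le \rho + \delta/2$ for $k\ge K_1$. Using $\xk(\givenPrm)\to\xmin$ together with \eqref{ineq:Lipschitz:Like:modified}, I then absorb the perturbation to obtain $\|A_k\| \le \rho+\delta$ for all $k \ge K \coloneqq \max(K_0,K_1)$ taken large enough.

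Iterating the recursion from index $K$ and taking norms gives
\begin{equation*}
    \|E_k\| \le (\rho+\delta)^{k-K}\|E_K\| \;+\; C_R \sum_{j=K}^{k-1} (\rho+\delta)^{k-1-j}\, q_{\x}^{j-K_0}.
\end{equation*}
The geometric sum splits into three regimes: when $\rho+\delta < q_{\x}$ it is dominated by a multiple of $q_{\x}^{k-K}$; when $\rho+\delta > q_{\x}$ by a multiple of $(\rho+\delta)^{k-K}$; and when $\rho+\delta = q_{\x}$ it equals a constant times $(k-K)(\rho+\delta)^{k-K-1}$. Absorbing the first two regimes into a term of the form $C_2(\delta)(\rho+\delta)^{k-K}$ (using $q_{\x}^{k-K}\le q^{k-K}$) and the borderline growth into $C_1(\delta)(k-K)q^{k-K}$ produces the claimed estimate \eqref{eq:D:FPI:modified:conv_rates} with $q = \max(\rho+\delta,q_{\x})$.

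The main obstacle I anticipate is reconciling the norm appearing in Assumption~\ref{ass:basic:modified}\ref{itm:basic:modified:spectral:norm} with the spectral radius $\rho$ featured in the conclusion: without a re-norming step of Householder type, only the weaker bound $\|A_k\| \le \limsup_k\|\derivx\fixMap[k](\xmin,\givenPrm)\| + \delta$ is directly available, and then $\rho$ has to be reinterpreted accordingly. The secondary technicality is a uniform treatment of the critical case $\rho+\delta \approx q_{\x}$, which is precisely why the bound \eqref{eq:D:FPI:modified:conv_rates} must retain the polynomially-weighted term $(k-K)q^{k-K}$; every other source of growth is a routine geometric-series computation once the affine recursion and its coefficients are in hand.
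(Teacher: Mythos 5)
Your proof is correct and follows the same path as the paper: derive the affine error recursion from the common fixed-point identity for $X_*$, bound the coefficient eventually by $\rho+\delta$, and propagate a geometric bound (the paper delegates the final summation to its Theorem~\ref{thm:LGFPI:modified:conv}, keeps $A_k = B_k + C_k$ split with $B_k$ evaluated at $\xmin$, and handles the borderline case $\rho+\delta = q_{\x}$ via the blanket bound $(k-K+1)q^{k-K}$ rather than your three-way case split, but these are cosmetic differences). You are also right about the norm-versus-spectral-radius mismatch: under Assumption~\ref{ass:basic:modified}\ref{itm:basic:modified:spectral:norm} alone there is no limiting map $\fixMap$, so the $\rho$ in the conclusion should be read as $\limsup_{k\to\infty}\norm{\derivx\fixMap[k](\xmin,\givenPrm)}$ in the chosen norm, exactly as in Theorem~\ref{thm:LGFPI:modified:conv}; the spectral-radius interpretation is recovered only in the special setting of Remark~\ref{rem:basic:modified} and Corollary~\ref{cor:D:FPI:modified:conv:(linear)} via the Householder re-norming you mention.
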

\findProofIn{thm:D:FPI:modified:conv:linear}
Owing to Corollary~\ref{cor:LGFPI:modified:conv} and our discussion in Remark~\ref{rem:basic:modified}, the following result shows that the work of \cite{Bec94} is reproduced and strengthened with a convergence rate guarantee under the additional requirement of Assumption~\ref{ass:basic:modified}\ref{itm:basic:modified:fixed:point}.
\begin{corollary} \label{cor:D:FPI:modified:conv:(linear)}
    The conclusions of Theorems~\ref{thm:D:FPI:modified:conv} and \ref{thm:D:FPI:modified:conv:linear} also hold when, in their respective hypotheses, Assumption~\ref{ass:basic:modified}\ref{itm:basic:modified:spectral:norm} is replaced by ``there exists a $C^1$-smooth $\map{\fixMap}{\neighbourhoodFM}{\spVar}$, such that $\deriv \fixMap[k] (\xmin, \givenPrm) $ converges to $ \deriv \fixMap (\xmin, \givenPrm)$ and $\rho (\derivx \fixMap (\xmin, \givenPrm)) < 1$''.
\end{corollary}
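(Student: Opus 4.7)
The plan is to show that the replacement condition is strictly stronger than Assumption~\ref{ass:basic:modified}\ref{itm:basic:modified:spectral:norm}, which then lets us invoke Theorems~\ref{thm:D:FPI:modified:conv} and \ref{thm:D:FPI:modified:conv:linear} verbatim after an appropriate choice of induced operator norm. All other hypotheses of these theorems, namely Assumptions~\ref{ass:basic:modified}\ref{itm:basic:modified:C1}, \ref{ass:basic:modified}\ref{itm:basic:modified:fixed:point}, \ref{ass:basic:modified}\ref{itm:basic:modified:iterates}, together with Assumption~\ref{ass:convergence:modified} (respectively Assumption~\ref{ass:linear:convergence:modified}), are untouched by the replacement and remain available.

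The key step is the choice of norm. By the standard consequence of Gelfand's spectral radius formula, for every linear operator $A$ on $\spVar$ and every $\epsilon>0$ there exists a vector norm on $\spVar$ whose induced operator norm on $\spLin(\spVar,\spVar)$ satisfies $\norm{A}\leq\rho(A)+\epsilon$. Applying this to $A\coloneqq\derivx\fixMap(\xmin,\givenPrm)$ with, e.g., $\epsilon\coloneqq(1-\rho(\derivx\fixMap(\xmin,\givenPrm)))/2>0$ produces an induced operator norm $\norm{\cdot}$ with
\begin{equation*}
    \norm{\derivx\fixMap(\xmin,\givenPrm)} \leq \frac{1+\rho(\derivx\fixMap(\xmin,\givenPrm))}{2} < 1 \,.
\end{equation*}

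With this norm fixed, continuity of $\norm{\cdot}$ together with the assumed convergence $\derivx\fixMap[k](\xmin,\givenPrm)\to\derivx\fixMap(\xmin,\givenPrm)$ yields $\norm{\derivx\fixMap[k](\xmin,\givenPrm)}\to\norm{\derivx\fixMap(\xmin,\givenPrm)}$, and therefore $\limsup_{k\to\infty}\norm{\derivx\fixMap[k](\xmin,\givenPrm)}<1$. This is exactly Assumption~\ref{ass:basic:modified}\ref{itm:basic:modified:spectral:norm} for the chosen norm. Invoking Theorem~\ref{thm:D:FPI:modified:conv} (respectively Theorem~\ref{thm:D:FPI:modified:conv:linear}) then delivers the asserted convergence of $\deriv\xk(\givenPrm)$ to $X_*$, and in the linear case the explicit bound \eqref{eq:D:FPI:modified:conv_rates} with $\rho=\rho(\derivx\fixMap(\xmin,\givenPrm))$.

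No substantial obstacle is expected; the proof is a routine reduction. The only mildly delicate point is that Assumption~\ref{ass:basic:modified}\ref{itm:basic:modified:spectral:norm} is formulated for an \emph{arbitrary} norm induced by a vector norm, which is precisely what permits the Gelfand-based choice above. Had the theorems instead been stated for a fixed, e.g.\ Euclidean, operator norm, one would additionally need equivalence of norms on the finite-dimensional space $\spLin(\spVar,\spVar)$, which would only inflate the constants $C_1(\delta)$ and $C_2(\delta)$ in \eqref{eq:D:FPI:modified:conv_rates} without affecting the linear rate.
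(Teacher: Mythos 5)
Your reduction is the right idea and is essentially the route the paper takes: the paper derives this corollary by appealing to its Corollary~\ref{cor:LGFPI:modified:conv}, whose proof is precisely the Gelfand-type norm construction you invoke (for $\rho(B)<1$ and $\delta>0$, pick an induced operator norm $\norm[\delta]{\cdot}$ with $\norm[\delta]{B}\le\rho(B)+\delta$, then use $B_k\to B$). For the conclusion of Theorem~\ref{thm:D:FPI:modified:conv} (plain convergence) your argument is complete: once an induced norm is chosen with $\norm{\derivx\fixMap(\xmin,\givenPrm)}<1$, continuity of the norm and the assumed convergence $\derivx\fixMap[k](\xmin,\givenPrm)\to\derivx\fixMap(\xmin,\givenPrm)$ give $\limsup_{k}\norm{\derivx\fixMap[k](\xmin,\givenPrm)}<1$, and Theorem~\ref{thm:D:FPI:modified:conv} applies verbatim.

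There is, however, a gap in the rate part. You fix $\epsilon=(1-\rho(\derivx\fixMap(\xmin,\givenPrm)))/2$ once and for all, which yields
\begin{equation*}
\limsup_{k\to\infty}\norm{\derivx\fixMap[k](\xmin,\givenPrm)}\;\le\;\tfrac{1}{2}\bigl(1+\rho(\derivx\fixMap(\xmin,\givenPrm))\bigr)\,.
\end{equation*}
Invoking Theorem~\ref{thm:D:FPI:modified:conv:linear} with this single norm therefore produces \eqref{eq:D:FPI:modified:conv_rates} with the geometric factor $\tfrac{1}{2}(1+\rho(\derivx\fixMap(\xmin,\givenPrm)))+\delta$, not $\rho(\derivx\fixMap(\xmin,\givenPrm))+\delta$, and the admissible range of $\delta$ shrinks accordingly; so you have not recovered the bound ``with $\rho=\rho(\derivx\fixMap(\xmin,\givenPrm))$'' as you claim. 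Your closing remark about norm equivalence only covers the multiplicative constants $C_1(\delta),C_2(\delta)$, not the exponential base, which is exactly where the loss occurs when the norm is $\delta$-independent. The fix is immediate and is what Corollary~\ref{cor:LGFPI:modified:conv} does implicitly: for each $\delta\in(0,1-\rho(\derivx\fixMap(\xmin,\givenPrm)))$ choose $\epsilon<\delta$ (e.g.\ $\epsilon=\delta/2$) in the Gelfand construction, so that $\limsup_k\norm[\delta]{\derivx\fixMap[k](\xmin,\givenPrm)}\le\rho(\derivx\fixMap(\xmin,\givenPrm))+\delta/2$, apply the rate estimate of Theorem~\ref{thm:LGFPI:modified:conv} with slack $\delta/2$, and finally pass to any fixed reference norm by equivalence of norms, which only inflates the constants.
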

\begin{remark} \label{rem:D:FPI:modified:conv:(linear)}
    The conclusions of Theorems~\ref{thm:D:FPI:modified:conv} and \ref{thm:D:FPI:modified:conv:linear} and Corollary~\ref{cor:D:FPI:modified:conv:(linear)} do not require the neighbourhood $\neighbourhoodFM$ of $(\xmin, \givenPrm)$ to be a connected set as long as Assumptions~\ref{ass:convergence:modified} and \ref{ass:linear:convergence:modified} are respectively satisfied along with Assumption~\ref{ass:basic:modified}\ref{itm:basic:modified:iterates}.
\end{remark}

\section{Differentiation of Proximal Gradient-type Methods} \label{sec:FBS}

We now turn our attention to unrolling PGD with variable step size and APG when solving 
\begin{equation} \label{prob:min:comp}
    \min_{\x\in\spVar} \ F (\x, \u) \,, \quad F \coloneqq f + g \,,
\end{equation}
and aim at providing convergence (rate) guarantees for the corresponding derivative sequences. The idea is to consider the class of partly smooth functions \cite{Lew02} which models a wide range of non-smooth regularizers and constraints appearing in practice \cite{VDP+17}. Furthermore, this setting (see Assumption~\ref{ass:CPSO}) under some additional assumptions (see Assumptions~\ref{ass:RPD} and \ref{ass:ND}) yields a more general IFT (see Theorem~\ref{thm:IFT}) for differentiating the solution mapping of \eqref{prob:min:comp}. This in turn allows us to differentiate the update mappings of PGD (see Theorem~\ref{thm:PGD:Derv}) and APG (see Theorem~\ref{thm:APG:Derv}), since they are also the solution mappings of partly smooth functions (see Equations~\ref{eq:prox}, \ref{eq:FixMap:PGD} and \ref{eq:FixMap:APG}).

\subsection{Problem Setting}

We first define our setting in a more precise manner. Let $\manif$ be a $C^2$-smooth manifold and $\setPrm\subset\spPrm$ be an open set, we need $f$ and $g$ to satisfy the following assumption.
\begin{assumption}[Convex Partly Smooth Objective] \label{ass:CPSO}
$\map{f}{\spVar\times\spPrm}{\R}$ is $C^2$-smooth, $\map{g}{\spVar\times\spPrm}{\eR}$ is partly smooth relative to $\manif\times\setPrm$ and for every $\u\in\setPrm$, $f(\cdot, \u)$ and $g(\cdot, \u)$ are convex, $f(\cdot, \u)$ has an $L$-Lipschitz continuous gradient, and $g (\cdot, \u)$ has a simple proximal mapping.
\end{assumption}
Let $\xmin\in\manif$ be a solution of \eqref{prob:min:comp} at $\givenPrm\in\setPrm$. When $g = 0$, IFT requires $\grad[\x] f (\xmin, \givenPrm) \succ 0$ which is also a sufficient condition for the linear convergence of gradient descent and the Heavy-ball method \cite{Pol87}. When $g$ is partly smooth, IFT requires positive definiteness of $\Hess[\manif] F (\xmin, \givenPrm)$ \cite{Lew02} while the linear convergence of PGD and APG is established by the positive definiteness of $\projTan (\xmin) \Hess[\x] f (\xmin, \givenPrm) \projTan (\xmin)$ \cite{LFP14, LFP17}.
\begin{assumption}[Restricted Positive Definiteness] \label{ass:RPD} Let $(\xmin, \givenPrm) \in \manif\times\setPrm$ be a given point.
\begin{enumerate}[label=(\roman*)]
    \item \label{itm:RPD-i}The Hessian $\Hess[\manif] F (\xmin, \givenPrm)$ is positive definite on $\spTan\xmin $, that is,
    \begin{equation} \tag{RPD-i} \label{eq:RPD-i}
        \Hess[\manif] F (\xmin, \givenPrm) \succ 0 \,.
    \end{equation}
    \item \label{itm:RPD-ii}Moreover, $\projTan (\xmin) \Hess[\x] f (\xmin, \givenPrm)$ is positive definite on $\spTan{\xmin}$, that is,
    \begin{equation} \tag{RPD-ii} \label{eq:RPD-ii}
        \rstDom{\projTan (\xmin) \Hess[\x] f (\xmin, \givenPrm)}{\spTan\xmin} \succ 0 \,.
    \end{equation}
\end{enumerate}
\end{assumption}
Finally, for partly smooth problems, IFT and linear convergence of PGD and APG also require a non-degeneracy assumption to hold which ensures that the solution stays in $\manif$ for any $\u$ near $\givenPrm$.
\begin{assumption}[Non-degeneracy] \label{ass:ND}
The non-degeneracy condition is satisfied at $(\xmin, \givenPrm)\in \manif\times\setPrm$, that is,
\begin{equation} \tag{ND} \label{eq:ND}
    0 \in \ri \partial_{\x} F (\xmin, \givenPrm) \,.
\end{equation}
\end{assumption}
\subsection{Implicit Function Theorem}
The following theorem is thanks to \cite{Lew02} which furnishes a $C^1$-smooth solution map for \eqref{prob:min:comp} near $\givenPrm$. The expression for the derivative can be found in \cite{VDP+17, MO24}.
\begin{theorem}[Differentiation of Solution Map] \label{thm:IFT}
Let $f$ and $g$ satisfy Assumption~\ref{ass:CPSO} and $(\xmin, \givenPrm)\in\manif\times\setPrm$ be such that Assumptions~\ref{ass:RPD}\ref{itm:RPD-i} and \ref{ass:ND} are satisfied. Then there exist an open neighbourhood $\neighbourhoodPRM\subset\setPrm$ of $\givenPrm$ and a continuously differentiable mapping $\map{\argmap}{\neighbourhoodPRM}{\manif}$ such that for all $\u\in \neighbourhoodPRM$,
\begin{enumerate}[label=(\roman*)]
    \item $\argmap(\u)$ is the unique minimizer of $\rstDom{F(\cdot, \u)}{\manif}$, \label{itm:IFT:i}
    \item \eqref{eq:ND} and \eqref{eq:RPD-i} are satisfied at $(\argmap (\u), \u)$, and \label{itm:IFT:ii}
    \item the derivative of $\argmap$ is given by
    \begin{equation} \label{eq:IFT}
        D \argmap (\u) = -\Hess[\manif] F (\argmap (\u), \u)^{\dagger} \derivu \grad[\manif] F (\argmap (\u), \u) \,,
    \end{equation}
    where $\Hess[\manif] F (\argmap (\u), \u)^{\dagger}$ denotes the pseudoinverse of $\Hess[\manif] F (\argmap (\u), \u)$. \label{itm:IFT:iii}
\end{enumerate}
\end{theorem}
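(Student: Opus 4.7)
The plan is to reduce the statement to a classical Implicit Function Theorem applied to the Riemannian optimality condition on $\manif$. Under Assumption~\ref{ass:CPSO}, the partial smoothness of $g$ relative to $\manif\times\setPrm$ ensures that $\rstDom{F}{\manif\times\setPrm}$ is jointly $C^2$-smooth, so the Riemannian gradient map $(\x, \u) \mapsto \grad[\manif] F(\x, \u)$ is well-defined and $C^1$-smooth as a section of the tangent bundle over $\manif$. Working in a local chart around $\xmin$, the non-degeneracy condition \eqref{eq:ND} forces $0 = \projTan(\xmin) \partial_{\x} F(\xmin, \givenPrm) = \set{\grad[\manif] F(\xmin, \givenPrm)}$, so $(\xmin, \givenPrm)$ solves $\grad[\manif] F(\x, \u) = 0$. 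Applied to this equation, \eqref{eq:RPD-i} provides invertibility of the linearization in $\x$ on $\spTan\xmin$, and the classical IFT delivers a $C^1$-smooth map $\map{\argmap}{\neighbourhoodPRM}{\manif}$ on some open $\neighbourhoodPRM\in\neighbourhood[\givenPrm]$ with $\argmap(\givenPrm) = \xmin$ and $\grad[\manif] F(\argmap(\u), \u) = 0$ for every $\u\in\neighbourhoodPRM$.

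For item~\ref{itm:IFT:ii}, persistence of \eqref{eq:RPD-i} follows by continuity: $\Hess[\manif] F$ depends continuously on $(\x, \u)$, and positive definiteness is an open condition, so shrinking $\neighbourhoodPRM$ if necessary keeps $\Hess[\manif] F(\argmap(\u), \u) \succ 0$ on $\spTan{\argmap(\u)}$. Persistence of \eqref{eq:ND} is the partly smooth analogue: one invokes the continuity of $\partial_{\x} F(\cdot,\cdot)$ along $\manif\times\setPrm$, a hallmark of partial smoothness \cite{Lew02}, together with the openness of the relative interior to conclude that $0\in\ri \partial_{\x} F(\argmap(\u),\u)$ for all $\u$ close enough to $\givenPrm$. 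For item~\ref{itm:IFT:i}, the preserved condition \eqref{eq:ND} combined with convexity of $F(\cdot, \u)$ gives $0\in\partial_{\x} F(\argmap(\u),\u)$, so $\argmap(\u)$ is a global minimizer of $F(\cdot, \u)$, and a fortiori of $\rstDom{F(\cdot,\u)}{\manif}$; uniqueness on $\manif$ (after possibly shrinking $\neighbourhoodPRM$) comes from the strict convexity of $\rstDom{F(\cdot,\u)}{\manif}$ near $\argmap(\u)$ induced by the preserved \eqref{eq:RPD-i}.

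For item~\ref{itm:IFT:iii}, differentiate the defining identity $\grad[\manif] F(\argmap(\u),\u) = 0$ with respect to $\u$ using the Riemannian chain rule, producing
\begin{equation*}
    \Hess[\manif] F(\argmap(\u),\u)\, \deriv \argmap(\u) + \derivu \grad[\manif] F(\argmap(\u),\u) = 0 \,.
\end{equation*}
Since $\deriv \argmap(\u)$ maps into $\spTan{\argmap(\u)}$, inverting $\Hess[\manif] F(\argmap(\u),\u)$ on that subspace amounts to applying its pseudoinverse $\Hess[\manif] F(\argmap(\u),\u)^{\dagger}$, which yields \eqref{eq:IFT}. The Weingarten correction arising from the curvature of $\manif$ when passing between Euclidean and intrinsic derivatives can be inserted at the chain-rule step following \cite[Remark~22]{MO24}.

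The main obstacle is the persistence of the non-degeneracy condition \eqref{eq:ND}: positive definiteness transfers by an open-set argument, but \eqref{eq:ND} involves the relative interior of a set-valued subdifferential, and one must use the partly smooth structure (in particular, the tangential smoothness of $g$ and the ``sharpness'' in normal directions) to guarantee that $\partial_{\x} F$ varies continuously enough along $(\argmap(\u),\u)\in\manif\times\neighbourhoodPRM$ for $\ri \partial_{\x} F$ to remain a neighbourhood of $0$. This is precisely the content carried out in \cite[Section~5]{Lew02}, which we would cite rather than reprove.
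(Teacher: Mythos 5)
The paper does not prove this theorem; it explicitly attributes the existence and smoothness of $\argmap$ to \cite{Lew02} (Theorem~5.7 there) and the derivative formula \eqref{eq:IFT} to \cite{VDP+17, MO24}. So there is no paper-internal proof to compare against; what you have written is a reconstruction of the cited argument. Your sketch is essentially the standard route and is correct in its main lines: apply the classical Implicit Function Theorem to the Riemannian stationarity condition $\grad[\manif] F(\x,\u)=0$, use \eqref{eq:RPD-i} for invertibility of the linearization on $\spTan\xmin$, use the defining properties of partial smoothness (smoothness of $\rstDom{F}{\manif\times\setPrm}$, continuity of the subdifferential map relative to $\manif$, and the sharpness condition) to propagate \eqref{eq:ND} to nearby $(\argmap(\u),\u)$, and then differentiate $\grad[\manif] F(\argmap(\u),\u)=0$ to obtain \eqref{eq:IFT} with the pseudoinverse accounting for the restriction of $\Hess[\manif] F$ to $\spTan{\argmap(\u)}$, where $\deriv\argmap(\u)$ lives.

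Two small caveats worth noting. First, for item~\ref{itm:IFT:i} you use convexity of $F(\cdot,\u)$ to upgrade $0\in\partial_{\x}F(\argmap(\u),\u)$ to global minimality of $F(\cdot,\u)$; this is correct but the uniqueness conclusion you draw from local strict convexity of $\rstDom{F(\cdot,\u)}{\manif}$ via \eqref{eq:RPD-i} is a priori only local, so a sentence reconciling this with the stated ``unique minimizer of $\rstDom{F(\cdot,\u)}{\manif}$'' (e.g., using convexity of the minimizer set, or reading the statement as local uniqueness as in Lewis) would tighten the argument. Second, the step ``$0 = \projTan(\xmin)\partial_{\x} F(\xmin,\givenPrm) = \set{\grad[\manif] F(\xmin,\givenPrm)}$'' is compressed: the fact that $\projTan(\xmin)\partial_{\x} F(\xmin,\givenPrm)$ is a singleton and equals the Riemannian gradient is a consequence of the partial-smoothness axioms (affine span of $\partial g$ parallel to $\spNor{\xmin}$ plus tangential smoothness), and it would help to state this explicitly. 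Neither is a genuine gap; both are routine points in Lewis' framework.
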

\subsection{The Extrapolated Proximal Gradient Algorithm}
Although Theorem~\ref{thm:IFT} is not practical when computing the derivative of the solution map, it allows us to differentiate the update mappings of PGD and APG as we will see shortly. Algorithm~\ref{alg:EPG} provides the update procedure for proximal gradient with extrapolation where $\bwd$ is defined in \eqref{eq:prox}. This algorithm reduces to (i) PGD when $\beta_k = 0$ for all $k\in\N$, and (ii) APG when the sequence $\beta_k$ ensures acceleration in PGD \cite{BT09,CD15}.
\begin{algorithm} \ \label{alg:EPG}
    \begin{itemize}
        \item \key{Initialization:} $\xz=\xzm\in \spVar$, $\u\in\spPrm$, $0 < \sslow\leq\ssup < 2/L$.
        \item \key{Parameter:} $(\alpha_k)_{k\in\N} \in [\sslow, \ssup]$ and $(\beta_k)_{k\in\N} \in [0, 1]$.
        \item \key{Update $k\geq 0$:}
        \begin{equation} \label{itr:APG} \tag{APG}
            \begin{aligned}
                \yk &\coloneqq (1+\beta_k ) \xk - \beta_k \xkm \\
                \wk &\coloneqq \yk - \alpha_k \grad[\x] f(\yk, \u) \\
                \xkp &\coloneqq \bwd[k] (\wk, \u) \,.
            \end{aligned}
        \end{equation}
    \end{itemize}
\end{algorithm}

\subsection{Proximal Gradient Descent}
We first extend our results from Section~\ref{sec:FPI:k} and provide convergence rate guarantees for AD of PGD. For a given step size $\alpha>0$, we write the update mapping for PGD in a more compact manner through the map $\map{\pgd}{\spVar\times\setPrm}{\spVar}$ defined by 
\begin{equation} \label{eq:FixMap:PGD}
    \pgd (\x, \u) \coloneqq \bwd (\x - \alpha \grad[\x] f (\x, \u), \u) \,,
\end{equation}
Liang et al.~\cite{LFP14} established that under Assumptions~\ref{ass:CPSO}, \ref{ass:RPD}\ref{itm:RPD-ii} and \ref{ass:ND}, all the iterates $\xk (\givenPrm)$ of PGD lie on $\manif$ after a finite number of iterations and exhibit a local linear convergence behaviour as summarized by the following lemma.
\begin{lemma}[Activity Identification and Linear Convergence of PGD] \label{lem:PGD}
Let $f$ and $g$ satisfy Assumption~\ref{ass:CPSO} and $(\xmin, \givenPrm)\in\manif\times\setPrm$ be such that Assumption~\ref{ass:ND} is satisfied. For $\alpha_k\in[\sslow, \ssup]$ and $\beta_k\coloneqq0$, let the sequence $\seq[k\in\N]{\xk (\givenPrm)}$ generated by Algorithm~\ref{alg:EPG} converges to $\xmin$. Then there exists $K\in\N$, such that $\xk (\givenPrm)\in\manif$ for all $k\geq K$. Moreover when Assumption~\ref{ass:RPD}\ref{itm:RPD-ii} is also satisfied and $\ssup<2 M/L^2$ where $M\coloneqq \lambda_{\max} (\projTan (\xmin) \grad[\x] f(\xmin, \givenPrm) \projTan (\xmin))$, then $\xk (\givenPrm)$ converge linearly to $\xmin$.
\end{lemma}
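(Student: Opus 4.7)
The plan is to verify activity identification via the Hare--Lewis partial-smoothness identification theorem, and then obtain the local linear rate by reducing PGD (once on $\manif$) to a Riemannian iteration whose linearization at $\xmin$ is a strict contraction on $\spTan{\xmin}$.

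First I would establish the identification statement. The optimality condition defining the proximal step $\xkp(\givenPrm) = \bwd[k](\xk(\givenPrm) - \alpha_k \grad[\x] f(\xk(\givenPrm),\givenPrm), \givenPrm)$ yields
\[
    \tfrac{1}{\alpha_k}\bigl(\xk(\givenPrm) - \xkp(\givenPrm)\bigr) - \grad[\x] f(\xk(\givenPrm), \givenPrm) \in \partial_{\x} g(\xkp(\givenPrm), \givenPrm).
\]
Since $\xk(\givenPrm)\to\xmin$, $\alpha_k \in [\sslow,\ssup]$ is bounded away from $0$ (so $\xk - \xkp \to 0$ forces the residual to vanish), and $\grad[\x] f$ is continuous, the left-hand side converges to $-\grad[\x] f(\xmin,\givenPrm)$. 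Assumption~\ref{ass:ND} says exactly that $-\grad[\x] f(\xmin,\givenPrm) \in \ri\,\partial_{\x} g(\xmin,\givenPrm)$, so the standard identification property of partly smooth functions yields $\xkp(\givenPrm)\in\manif$ for all $k$ large enough.

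Next, for the linear rate, once the iterates are confined to $\manif$ the proximal gradient update can be recast as a Riemannian proximal gradient step. I would linearize $\pgd[k]$ at $(\xmin,\givenPrm)$: its action on $\spTan{\xmin}$ reduces to $\opid - \alpha_k \projTan(\xmin)\Hess[\x] f(\xmin,\givenPrm)\projTan(\xmin)$, while the normal component contracts separately because of non-degeneracy. Assumption~\ref{ass:RPD}\ref{itm:RPD-ii} gives the eigenvalues of $\projTan(\xmin)\Hess[\x] f(\xmin,\givenPrm)\projTan(\xmin)$ in $[m,M]$ with $m>0$, and combined with the bound $\alpha_k \le \ssup < 2M/L^2$ a routine eigenvalue estimate bounds the spectral norm of the tangential linearization uniformly strictly below $1$. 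Invoking this uniform contraction from the first index at which all iterates lie on $\manif$ produces the asserted local linear convergence of $\xk(\givenPrm)$ to $\xmin$.

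The main obstacle is dealing with the step-size variability: the sequence $\seq[k]{\alpha_k}$ is only assumed to lie in a compact interval and need not converge, so a direct fixed-point or contraction-mapping argument for a single map is unavailable. The crucial observation is that the spectral bound on the linearization is uniform in $\alpha\in[\sslow,\ssup]$ precisely because of the quantitative step-size bound $\ssup < 2M/L^2$. Since this entire argument, together with the surrounding technicalities (in particular the Riemannian expansion of $\pgd[k]$ around $\xmin$ and the treatment of the normal displacement), is carried out in full detail in Liang et al.~\cite{LFP14}, the proof ultimately consists of adapting their analysis to the parameter-dependent setting permitted by Assumption~\ref{ass:CPSO} and invoking it at $\u=\givenPrm$.
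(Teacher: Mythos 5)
Your proposal is consistent with the paper: the paper does not supply its own proof of this lemma but simply cites Liang et al.~\cite[Theorem~3.1]{LFP14} (see the surrounding text and Remark~\ref{rem:PGD}), and your sketch traces exactly the identification-plus-local-linearization argument of that reference before correctly deferring the technical details to it. One small point worth noting: the lemma's statement has a typo, writing $\grad[\x] f$ where $\Hess[\x] f$ is meant in the definition of $M$, and you read it the intended way.
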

\begin{remark} \label{rem:PGD}
    The precise rate of convergence of PGD can be found in \cite[Theorem~3.1]{LFP14}.
\end{remark}
Mehmood and Ochs~\cite[Theorem~34]{MO24} provide a foundation for proving the following crucial differentiablity result for $\pgd$ by combining Theorem~\ref{thm:IFT} and Lemma~\ref{lem:PGD}. However, the result below is novel since it establishes differentiability of $\pgd[k]$ at the iterates $\xk (\givenPrm)$ of PGD for all $k\in\N$ and also shows that the derivative sequence $\deriv \pgd[k]$ is well-behaved.
\begin{theorem} \label{thm:PGD:Derv}
    Let $f$ and $g$ satisfy Assumption~\ref{ass:CPSO} and $(\xmin, \givenPrm)\in\manif\times\setPrm$ be such that Assumption~\ref{ass:ND} are satisfied. For $\alpha_k\in[\sslow, \ssup]$ and $\beta_k\coloneqq0$, let the sequence $\seq[k\in\N]{\xk\coloneqq\xk (\givenPrm)}$ generated by Algorithm~\ref{alg:EPG} converges to $\xmin$. Then there exists $K\in\N$, such that,
    \begin{enumerate}[label=(\roman*)]
        \item \label{itm:PGD:Derv:Itrs} the mapping $(\x, \u, \alpha) \mapsto \pgd(\x, \u)$ defined in \eqref{eq:FixMap:PGD} is $C^1$-smooth near $(\xk, \givenPrm, \alpha_k)$ and $(\xmin, \givenPrm, \alpha_k)$ for all $k\geq K$,
        \item \label{itm:PGD:Derv:Conv} $\seq[k\geq K]{\deriv \pgd[k] (\xk, \givenPrm) - \deriv \pgd[k] (\xmin, \givenPrm)}$ converges to $0$, and
        \item \label{itm:PGD:Derv:Conv:Linear} additionally, when $\manif$ is $C^3$-smooth, $\grad[\manif] g$, $\Hess[\x] f$, $\Hess[\manif] g$, $D_{\u} \grad[\x] f$, and $D_{\u} \grad[\manif] g$ are locally Lipschitz continuous near $(\xmin, \givenPrm)$ and $\seq[k\in\N]{\xk}$ converges linearly, then $\deriv \pgd[k] (\xk, \givenPrm) - \deriv \pgd[k] (\xmin, \givenPrm) = \O (\xk - \xmin)$.
    \end{enumerate}
\end{theorem}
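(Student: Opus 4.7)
The plan is to express $\pgd$ as the composition of a smooth gradient step with the proximal mapping $\bwd$, and to exploit that $\bwd$ is itself the solution map of a parametric partly smooth optimization problem so that Theorem~\ref{thm:IFT} applies. Since $f$ is $C^2$-smooth, the inner map $(\x, \u, \alpha)\mapsto \x - \alpha\grad[\x] f(\x, \u)$ is everywhere $C^1$-smooth, and hence the smoothness of $\pgd$ reduces to that of $\bwd$ at $\wk \coloneqq \xk - \alpha_k\grad[\x] f(\xk, \givenPrm)$ and at the corresponding point at $\xmin$. The base case at $\xmin$ is essentially \cite[Theorem~34]{MO24}, so the novelty is to transfer the hypotheses of Theorem~\ref{thm:IFT} to the iterates and then to control the resulting derivative sequence uniformly in $\alpha_k \in [\sslow, \ssup]$.

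For \ref{itm:PGD:Derv:Itrs}, by Lemma~\ref{lem:PGD} there exists $K_0\in\N$ such that $\xk\in\manif$ for all $k\geq K_0$. I would apply Theorem~\ref{thm:IFT} to the prox problem
\[
    \bwd(\w, \u) = \argmin_{\x\in\spVar}\; \alpha g(\x, \u) + \tfrac{1}{2}\norm{\x - \w}^2
\]
viewed as a parametric optimization in $(\w, \u, \alpha)$. The non-degeneracy condition \eqref{eq:ND} for this auxiliary problem at $(\xkp, (\wk, \givenPrm, \alpha_k))$ reads $-(\xkp - \wk)/\alpha_k \in \ri\partial_{\x} g(\xkp, \givenPrm)$; at the limiting configuration this is exactly Assumption~\ref{ass:ND} combined with the fixed-point identity $\xmin = \bwd[k](\xmin - \alpha_k\grad[\x] f(\xmin, \givenPrm), \givenPrm)$, and it propagates to the iterates for $k$ large because $\xk\to\xmin$ along $\manif$ and the relative interior of $\partial_{\x} g(\cdot, \givenPrm)$ varies continuously along the active manifold. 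The restricted positive definiteness \eqref{eq:RPD-i} for the prox objective on $\spTan{\xkp}$ follows from convexity of $g(\cdot, \givenPrm)$ on $\manif$ together with the identity on $\spTan{\xkp}$ contributed by $\tfrac{1}{2}\norm{\cdot - \wk}^2$, up to a Weingarten correction that is uniformly small in $k$ and $\alpha_k\in[\sslow, \ssup]$ because $\projNor(\xkp)(\xkp - \wk)$ is of controlled size. Theorem~\ref{thm:IFT} then yields $C^1$-smoothness of $\bwd$, and therefore of $\pgd$, on neighbourhoods of both $(\xk, \givenPrm, \alpha_k)$ and $(\xmin, \givenPrm, \alpha_k)$ for all $k\geq K$.

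For \ref{itm:PGD:Derv:Conv} and \ref{itm:PGD:Derv:Conv:Linear}, Theorem~\ref{thm:IFT} combined with the chain rule provides an explicit formula for $\deriv \pgd[k](\x, \givenPrm)$ as a continuous expression in $\projTan(\x)$, the pseudoinverse of the Riemannian Hessian $\Hess[\manif][\alpha_k g(\cdot, \givenPrm) + \tfrac{1}{2}\norm{\cdot - \wk}^2](\xkp)$, the Weingarten map at $\xkp$, and the partial derivatives $\grad[\x] f$, $\Hess[\x] f$, $\grad[\manif] g$, $\Hess[\manif] g$, $\derivu\grad[\x] f$, and $\derivu\grad[\manif] g$ evaluated at the relevant base points. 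The rank of the restricted Hessian is constant in a neighbourhood of $\xmin$ thanks to \eqref{eq:RPD-i} being uniform, so its pseudoinverse depends continuously on its entries, and the resulting map $(\x, \alpha)\mapsto \deriv\pgd(\x, \givenPrm)$ is continuous on a compact neighbourhood of $\set{\xmin}\times[\sslow, \ssup]$; evaluating at $\xk$ and at $\xmin$ with the same $\alpha_k$ and subtracting yields \ref{itm:PGD:Derv:Conv} uniformly in $\alpha_k$. Under the stronger hypotheses of \ref{itm:PGD:Derv:Conv:Linear}, the $C^3$-smoothness of $\manif$ makes $\projTan$ and the Weingarten map $C^1$ on $\manif$, and the assumed local Lipschitz continuity of the remaining ingredients renders the above formula Lipschitz in $\x$ uniformly over $\alpha\in[\sslow, \ssup]$, which delivers the $\O(\xk - \xmin)$ estimate.

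The main obstacle I anticipate is keeping all neighbourhood sizes, the margin of strict inclusion in \eqref{eq:ND}, the coercivity lower bound in \eqref{eq:RPD-i}, and the eventual Lipschitz moduli bounded uniformly in $\alpha_k\in[\sslow, \ssup]$, since the step-size sequence is not assumed to converge. Compactness of $[\sslow, \ssup]$ is the key tool, but it must be combined carefully with the Riemannian-geometric objects entering through the prox problem --- in particular the Weingarten correction from the squared-distance term --- which is the technical heart of the argument and the reason \cite[Theorem~34]{MO24} is leveraged as the base case.
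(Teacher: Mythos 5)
Your overall plan — applying Theorem~\ref{thm:IFT} to the proximal subproblem viewed as a parametric partly smooth problem in $(\w, \u, \alpha)$ and chaining with the smooth gradient step — is the same route the paper takes (the paper folds the gradient step in before invoking the theorem, but that is cosmetic). However, the step you gloss over is precisely the technical heart of part \ref{itm:PGD:Derv:Itrs}: propagating non-degeneracy from $(\xmin, \givenPrm)$ to the iterates $\xkp$. You assert that the auxiliary non-degeneracy holds for large $k$ ``because $\xk \to \xmin$ along $\manif$ and the relative interior of $\partial_{\x} g(\cdot, \givenPrm)$ varies continuously along the active manifold,'' but this is not a valid argument: the relative-interior operation is not continuous for converging convex sets, and a subgradient that converges to a point in $\ri \partial_{\x} g(\xmin, \givenPrm)$ need not, a priori, lie in $\ri \partial_{\x} g(\xkp, \givenPrm)$ for any finite $k$. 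What closes this step is Lemma~\ref{lem:relint:Convex:Set:Sequence}, a dedicated statement about sequences of convex sets whose proof uses a contradiction argument with normal cones; the paper applies it to the sequence $\partial_{\x} F(\xkp, \givenPrm)$ and the vectors $(\wk - \xkp)/\alpha_k + \grad[\x] f(\xkp,\givenPrm) \to 0 \in \ri \partial_{\x} F(\xmin, \givenPrm)$. Without this lemma or an equivalent, your proposal leaves the crucial non-degeneracy claim unjustified.

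Your argument for restricted positive definiteness of the prox objective is also flawed as stated. You claim the Weingarten correction is ``uniformly small ... because $\projNor(\xkp)(\xkp - \wk)$ is of controlled size,'' but this term is \emph{not} small: $\xkp - \wk = -\alpha_k \bm v_k$ with $\bm v_k \in \partial_{\x} g(\xkp, \givenPrm)$ and $\bm v_k \to -\grad[\x] f(\xmin, \givenPrm)$, whose normal component is generically nonzero, so the Weingarten term converges to a nonzero operator. Bounding it is unnecessary anyway. The clean argument, used implicitly in the paper, is that the prox objective $\y \mapsto \alpha_k g(\y, \givenPrm) + \tfrac{1}{2}\norm{\y - \wk}^2$ is $1$-strongly convex on all of $\spVar$, and $\xkp$ is its unconstrained minimizer lying on $\manif$, so for any $C^2$ curve $\gamma$ on $\manif$ through $\xkp$ the inequality $h(\gamma(t)) \geq h(\xkp) + \tfrac12\norm{\gamma(t)-\xkp}^2$ forces $\Hess[\manif] h(\xkp) \succeq \projTan(\xkp)$ on $\spTan{\xkp}$; the Weingarten term never needs to be isolated. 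Once these two steps are repaired, your continuity-on-compacta argument for \ref{itm:PGD:Derv:Conv} and the Lipschitz estimates for \ref{itm:PGD:Derv:Conv:Linear} essentially reproduce the paper's expansion of $\Hess[\manif] H$ and its partial derivatives and their pseudoinverses.
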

\findProofIn{thm:PGD:Derv}
\begin{remark}
    The $C^3$-smoothness of $\manif$ is a sufficient condition for the local Lipschitz continuity of the Weingarten map $(\x, \v) \mapsto \wein[\x]{\cdot}{\v}$.
\end{remark}
\subsubsection{Implicit Differentiation}
Under Assumptions~\ref{ass:CPSO}--\ref{ass:ND}, Mehmood and Ochs~\cite[Theorem~34]{MO24} provided an IFT for the fixed-point equation of PGD which is more practical for using ID than \eqref{eq:IFT} and is restated below.
\begin{theorem} \label{thm:PGD:IFT}
    Let $f$ and $g$ satisfy Assumption~\ref{ass:CPSO} and $(\xmin, \givenPrm)\in\manif\times\setPrm$ be such that Assumptions~\ref{ass:RPD}\ref{itm:RPD-ii} and \ref{ass:ND} are satisfied. Then for any $\alpha\in[\sslow, \ssup]$, $\rho (D_{\x} \pgd (\xmin, \givenPrm) \projTan (\xmin)) < 1$. Additionally when Assumption~\ref{ass:RPD}\ref{itm:RPD-i} is also satisfied, the (possibly reduced) neighbourhood $\neighbourhoodPRM$ and the mapping $\argmap$ from Theorem~\ref{thm:IFT} satisfy $\x = \pgd (\x, \u)$ and
    \begin{equation} \label{eq:PGD:IFT}
        D \argmap (\u) = \left( \opid - D_{\x} \pgd (\x, \u) \projTan (\x) \right)^{-1} D_{\u} \pgd (\x, \u) \,,
    \end{equation}
    for all $\u\in \neighbourhoodPRM$ and $\x \coloneqq \argmap (\u)$.
\end{theorem}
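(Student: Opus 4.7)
The plan is in two parts. I first establish the spectral bound $\rho(D_{\x} \pgd(\xmin, \givenPrm) \projTan(\xmin)) < 1$ for every $\alpha \in [\sslow, \ssup]$ by computing the relevant Jacobian through the partly smooth calculus; then, under the additional Assumption~\ref{ass:RPD}\ref{itm:RPD-i}, I derive the fixed-point equation $\x = \pgd(\x, \u)$ from Theorem~\ref{thm:IFT} and differentiate it implicitly to obtain \eqref{eq:PGD:IFT}. Applying the chain rule to \eqref{eq:FixMap:PGD} gives $D_{\x}\pgd(\xmin,\givenPrm) = D_{\w}\bwd(\w,\givenPrm)\,(\opid - \alpha \Hess[\x] f(\xmin,\givenPrm))$ with $\w := \xmin - \alpha \grad[\x] f(\xmin,\givenPrm)$. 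Under Assumption~\ref{ass:ND}, the partly smooth calculus of \cite{Lew02,VDP+17,MO24} identifies $D_{\w}\bwd(\w,\givenPrm)$ as an operator whose range lies in $\spTan{\xmin}$ and whose restriction there equals $A^{-1}$, with $A$ a self-adjoint operator on $\spTan{\xmin}$ satisfying $A \succeq \opid$, given by $\opid$ plus $\alpha$ times the Riemannian Hessian of $g(\cdot,\givenPrm)$ along $\manif$ (plus a symmetric Weingarten correction which remains positive semi-definite thanks to convexity of $g(\cdot,\givenPrm)$). Post-composing with $\projTan(\xmin)$, the operator $D_{\x}\pgd(\xmin,\givenPrm)\projTan(\xmin)$ vanishes on $\spNor{\xmin}$ and on $\spTan{\xmin}$ equals $A^{-1}(\opid - \alpha B)$ with $B := \projTan(\xmin)\Hess[\x] f(\xmin,\givenPrm)\projTan(\xmin)$.

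The spectral radius of this restriction equals that of the self-adjoint operator $A^{-1/2}(\opid - \alpha B) A^{-1/2}$ on $\spTan{\xmin}$, which in turn equals its operator norm. Assumption~\ref{ass:RPD}\ref{itm:RPD-ii} gives $\mu := \lambda_{\min}(B|_{\spTan{\xmin}}) > 0$, and the $L$-Lipschitz continuity of $\grad[\x] f(\cdot,\givenPrm)$ gives $B|_{\spTan{\xmin}} \preceq L\,\opid$. Combined with $0 < \sslow \leq \alpha \leq \ssup < 2/L$, the spectrum of $\opid - \alpha B$ on $\spTan{\xmin}$ lies in $[1 - \alpha L,\, 1 - \alpha \mu] \subset (-1, 1)$, so $\|\opid - \alpha B\|_{\spTan{\xmin}} < 1$; sandwiching by $A^{-1/2}$ with $\|A^{-1/2}\|_{\spTan{\xmin}} \leq 1$ preserves the strict inequality and completes the spectral bound.

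For the second part, Theorem~\ref{thm:IFT} produces a $C^1$ map $\map{\argmap}{\neighbourhoodPRM}{\manif}$ satisfying \eqref{eq:ND} at $(\argmap(\u), \u)$ for each $\u \in \neighbourhoodPRM$. Convexity of $F(\cdot,\u)$ combined with \eqref{eq:ND} yields $0 \in \partial_{\x} F(\argmap(\u),\u)$, which is equivalent to $\argmap(\u) = \pgd(\argmap(\u),\u)$ for every $\alpha > 0$. Possibly shrinking $\neighbourhoodPRM$ so that Theorem~\ref{thm:PGD:Derv}\ref{itm:PGD:Derv:Itrs} applies, I differentiate this fixed-point equation in $\u$ and use $D\argmap(\u) = \projTan(\argmap(\u))\,D\argmap(\u)$ (because $\argmap$ takes values in $\manif$) to obtain $(\opid - D_{\x}\pgd(\argmap(\u),\u)\projTan(\argmap(\u)))\,D\argmap(\u) = D_{\u}\pgd(\argmap(\u),\u)$; the spectral bound from the first part makes the left-hand operator invertible, yielding \eqref{eq:PGD:IFT}. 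The main obstacle is making the Jacobian identification for $\bwd$ at $\w$ fully rigorous, in particular verifying that the Weingarten/normal-curvature contribution is symmetric and positive semi-definite so that $A \succeq \opid$ and the similarity-plus-Rayleigh-quotient reduction goes through cleanly; the remaining implicit differentiation is then routine given Theorems~\ref{thm:IFT} and~\ref{thm:PGD:Derv}.
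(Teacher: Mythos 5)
The paper does not prove this theorem itself: it restates \cite[Theorem~34]{MO24} verbatim (see the sentence immediately preceding the statement), so there is no in-paper proof to compare against. Judged on its own merits, your proposal follows the standard route used in \cite{LFP14,LFP17,MO24}: chain rule on the composition $\pgd=\bwd\circ(\opid-\alpha\grad[\x]f)$, Riemannian Jacobian identification of $\bwd$ at the fixed point, similarity to a symmetrized operator on $\spTan\xmin$ to control the spectral radius, and then standard implicit differentiation of $\argmap(\u)=\pgd(\argmap(\u),\u)$ using $D\argmap(\u)=\projTan(\argmap(\u))D\argmap(\u)$. The structure and the reductions are correct.

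Two points deserve attention. First, you honestly flag the one genuine gap: you do not prove that the operator $A:=\opid|_{\spTan\xmin}+\alpha\Hess[\manif]g(\xmin,\givenPrm)+\alpha\,\wein[\xmin]{\cdot}{\projNor(\xmin)\grad[\x]f(\xmin,\givenPrm)}$ satisfies $A\succeq\opid$. But be careful with the attribution: it is not the Weingarten correction alone that is positive semi-definite; rather it is the \emph{sum} $\alpha\Hess[\manif]g+\alpha\,\wein{\cdot}{\projNor\grad[\x]f}$, which equals $\alpha\bigl(\Hess[\manif]F-\projTan\Hess[\x]f\projTan\bigr)$, and its positive semi-definiteness under \eqref{eq:ND} and convexity is precisely \cite[Lemma~4.3]{LFP17} (the paper's own proof of Theorem~\ref{thm:PGD:Derv} invokes exactly this). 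Without that lemma, the chain $A\succeq\opid\Rightarrow\|A^{-1/2}\|\le1$ on $\spTan\xmin$ breaks, and the sandwiching by $A^{-1/2}$ need no longer preserve $\|\opid-\alpha B\|<1$. So cite that lemma explicitly rather than appealing to ``convexity'' informally. Second, a cosmetic point: you invoke Theorem~\ref{thm:PGD:Derv}\ref{itm:PGD:Derv:Itrs} for the $C^1$-smoothness of $\pgd$ near $(\xmin,\givenPrm,\alpha)$, but that theorem's hypotheses include convergence of the algorithm iterates, which is not assumed here; the smoothness you need is the standalone statement in \cite[Corollary~32]{MO24}. With those two citations in place, your argument is complete. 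Given $A\succeq\opid$, the similarity argument is clean: $A^{-1}(\opid-\alpha B)|_{\spTan\xmin}$ has the same spectrum as the self-adjoint $A^{-1/2}(\opid-\alpha B)A^{-1/2}$, whose norm is at most $\|A^{-1/2}\|^2\|\opid-\alpha B\|\le\|\opid-\alpha B\|$, and $\mu\opid\preceq B|_{\spTan\xmin}\preceq L\opid$ with $\alpha\in(0,2/L)$ gives $\|\opid-\alpha B\|_{\spTan\xmin}<1$, which is exactly the asserted spectral bound.
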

\subsubsection{Automatic Differentiation}
Using Theorems~\ref{thm:PGD:Derv}, and \ref{thm:PGD:IFT} and our results from Section~\ref{sec:FPI:k}, we obtain the convergence (rate) guarantees for AD of PGD.
\begin{theorem} \label{thm:PGD:AD}
    Let $f$ and $g$ satisfy Assumption~\ref{ass:CPSO} and $(\xmin, \givenPrm)\in\manif\times\setPrm$ be such that Assumptions~\ref{ass:RPD} and \ref{ass:ND} are satisfied. Let $\alpha_k\in[\sslow, \ssup]$, $\beta_k \coloneqq0$ and the sequence $\seq[k\in\N]{\xk(\givenPrm)}$ generated by Algorithm~\ref{alg:EPG} converges to $\xmin$ with $\xz$ sufficiently close to $\xmin$. Then the sequence $\seq[k \in\N]{\deriv \xk (\givenPrm)}$ converges to $D \argmap(\givenPrm)$. Additionally $\seq[k \in\N]{\deriv \xk (\givenPrm)}$ converges linearly with rate $\max(q_{\x}, \limsup_{k\to\infty} \rho (\derivx \pgd[k] (\xmin, \givenPrm)))$ when $\manif$ is $C^3$-smooth, $\grad[\manif] g$, $\Hess[\x] f$, $\Hess[\manif] g$, $D_{\u} \grad[\x] f$, and $D_{\u} \grad[\manif] g$ are locally Lipschitz continuous near $(\xmin, \givenPrm)$ and $\seq[k\in\N]{\xk(\givenPrm)}$ converges with rate $q_{\x} < 1$.
\end{theorem}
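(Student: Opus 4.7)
The plan is to realize PGD with variable step size as the iteration \eqref{itr:FPI:k} with $\fixMap[k] \coloneqq \pgd[k]$ from \eqref{eq:FixMap:PGD}, verify Assumptions~\ref{ass:basic:modified} and~\ref{ass:convergence:modified}, and then invoke Theorem~\ref{thm:D:FPI:modified:conv}; for the linear-rate statement, additionally verify Assumption~\ref{ass:linear:convergence:modified} and invoke Theorem~\ref{thm:D:FPI:modified:conv:linear}. The heavy lifting is already done by Theorems~\ref{thm:PGD:IFT} and~\ref{thm:PGD:Derv}, which supply respectively the fixed-point property with spectral bound, and the $C^1$-smoothness of $\pgd[k]$ together with the required control on $\deriv\pgd[k](\xk,\givenPrm) - \deriv\pgd[k](\xmin,\givenPrm)$.

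For Assumption~\ref{ass:basic:modified} I take $X_* \coloneqq \deriv\argmap(\givenPrm)$ (furnished by Theorem~\ref{thm:IFT}, whose hypotheses are subsumed by ours). Item~\ref{itm:basic:modified:C1} is Theorem~\ref{thm:PGD:Derv}\ref{itm:PGD:Derv:Itrs} after shifting the index by some $K$, and item~\ref{itm:basic:modified:iterates} is in our hypothesis (using $\xz$ sufficiently close to $\xmin$ to secure a common neighbourhood $\neighbourhoodFM$). For item~\ref{itm:basic:modified:fixed:point}, Theorem~\ref{thm:PGD:IFT} gives $\xmin = \pgd[k](\xmin,\givenPrm)$ for every $k$, while rewriting \eqref{eq:PGD:IFT} as $X_* = \derivx\pgd[k](\xmin,\givenPrm)\,X_* + \derivu\pgd[k](\xmin,\givenPrm)$ for every $k$ identifies $X_*$ as the required common fixed point, because $\deriv\argmap(\givenPrm)$ does not involve $\alpha_k$. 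Item~\ref{itm:basic:modified:spectral:norm} is supplied by Theorem~\ref{thm:PGD:IFT}: $\rho(\derivx\pgd[k](\xmin,\givenPrm)) < 1$ holds for each $\alpha_k\in[\sslow,\ssup]$, and upper-semicontinuity of the spectral radius on the compact interval $[\sslow,\ssup]$ promotes this to a uniform bound $\rho_{\max} < 1$; choosing an adapted norm on the compact family of linearisations then yields the $\limsup$ bound required in terms of $\norm{\cdot}$. Assumption~\ref{ass:convergence:modified} is exactly Theorem~\ref{thm:PGD:Derv}\ref{itm:PGD:Derv:Conv}, so Theorem~\ref{thm:D:FPI:modified:conv} yields $\deriv\xk(\givenPrm) \to \deriv\argmap(\givenPrm)$.

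For the linear-rate claim I verify Assumption~\ref{ass:linear:convergence:modified} by combining the hypothesised linear rate $q_\x$ of $\xk(\givenPrm)$ with the bound $\deriv\pgd[k](\xk,\givenPrm) - \deriv\pgd[k](\xmin,\givenPrm) = \O(\xk-\xmin)$ from Theorem~\ref{thm:PGD:Derv}\ref{itm:PGD:Derv:Conv:Linear}, whose Lipschitz hypotheses match those imposed here. Theorem~\ref{thm:D:FPI:modified:conv:linear} then delivers linear convergence with rate $\max(q_\x,\rho+\delta)$ for arbitrarily small $\delta>0$, where, since $\pgd[k]$ need not admit a pointwise limit, $\rho$ must be read as $\limsup_{k\to\infty}\rho(\derivx\pgd[k](\xmin,\givenPrm))$, matching the claimed rate. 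The step I expect to be trickiest is exactly the common fixed-point verification in item~\ref{itm:basic:modified:fixed:point}: one must recognise that $\deriv\argmap(\givenPrm)$ on the left of \eqref{eq:PGD:IFT} is genuinely $\alpha_k$-independent, and reconcile the $\derivx\pgd[k](\xmin,\givenPrm)$ of Section~\ref{sec:FPI:k} with the product $D_\x\pgd[k](\xmin,\givenPrm)\projTan(\xmin)$ of Theorem~\ref{thm:PGD:IFT}; these coincide because partial smoothness of $g$ forces the range of $\bwd[k](\cdot,\givenPrm)$ to lie on $\manif$ near $\xmin$, so the $\x$-derivative of $\pgd[k]$ automatically factors through $\projTan(\xmin)$.
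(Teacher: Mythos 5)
Your high-level plan — realise PGD with variable step size as \eqref{itr:FPI:k}, verify Assumptions~\ref{ass:basic:modified}, \ref{ass:convergence:modified}, \ref{ass:linear:convergence:modified} via Theorems~\ref{thm:PGD:IFT} and~\ref{thm:PGD:Derv}, then invoke Theorems~\ref{thm:D:FPI:modified:conv} and~\ref{thm:D:FPI:modified:conv:linear} — is not the route the paper takes, and the difference is not cosmetic. The paper does \emph{not} route through the abstract Section~\ref{sec:FPI:k} theorems: it writes out the error recursion explicitly, with the tangent-space projections $\projTan(\xk)$ and $\projTan(\xmin)$ built into the coefficient sequences $B_k$, $C_k$, $\dk$, and then invokes the underlying matrix lemma Theorem~\ref{thm:LGFPI:modified:conv} directly. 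The reason is precisely the issue you flag in your last paragraph, and your resolution of it has a gap.

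The gap is in your ``factors through'' claim. Partial smoothness does force the \emph{range} of $\derivx\pgd[k](\xmin,\givenPrm)$ to lie in $\spTan{\xmin}$, i.e.\ $\projTan(\xmin)\,\derivx\pgd[k](\xmin,\givenPrm) = \derivx\pgd[k](\xmin,\givenPrm)$. But what you need to reconcile $\derivx\fixMap[k]$ in Assumption~\ref{ass:basic:modified}\ref{itm:basic:modified:spectral:norm} with the product $D_{\x}\pgd[k](\xmin,\givenPrm)\projTan(\xmin)$ appearing in Theorem~\ref{thm:PGD:IFT} is the \emph{domain} restriction $\derivx\pgd[k](\xmin,\givenPrm)\,\projTan(\xmin) = \derivx\pgd[k](\xmin,\givenPrm)$, and that is false in general: from \eqref{eq:PGD:Derv} the $\x$-block of the derivative carries a factor $\projTan(\xmin)\bigl(\opid - \alpha_k\Hess[\x]f(\xmin,\givenPrm)\bigr)$, whose action on $\spNor{\xmin}$ is $-\alpha_k\projTan(\xmin)\Hess[\x]f(\xmin,\givenPrm)\projNor(\xmin)$ and does not vanish. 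Consequently Theorem~\ref{thm:PGD:IFT} gives you $\rho\bigl(\derivx\pgd[k](\xmin,\givenPrm)\projTan(\xmin)\bigr) < 1$ but \emph{not} directly the norm bound $\limsup_k\norm{\derivx\pgd[k](\xmin,\givenPrm)} < 1$ that Assumption~\ref{ass:basic:modified}\ref{itm:basic:modified:spectral:norm} requires for $\fixMap[k]=\pgd[k]$. The error vector $\ek = \deriv\xk(\givenPrm)\ud - \deriv\argmap(\givenPrm)\ud$ lives in the \emph{drifting} difference $\spTan{\xk} - \spTan{\xmin}$, so one cannot simply restrict the dynamics to a fixed tangent space either. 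This is exactly the obstruction the paper sidesteps by defining $B_k \coloneqq \derivx\pgd[k](\xmin,\givenPrm)\projTan(\xmin)$ and pushing the tangent-space drift into the vanishing terms $C_k$ and $\dk$, then applying Theorem~\ref{thm:LGFPI:modified:conv} directly.

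The observation that can rescue part of your argument is this: because the range of $\derivx\pgd[k](\xmin,\givenPrm)$ sits in $\spTan{\xmin}$, its nonzero spectrum coincides with that of $\derivx\pgd[k](\xmin,\givenPrm)\projTan(\xmin)$, so the spectral-radius bound does transfer. But to convert that into a single norm bound uniform in $k$ (the family is indexed by $\alpha_k\in[\sslow,\ssup]$ and has no pointwise limit) still takes a compactness/adapted-norm argument that you invoke but don't justify, and even then the projection $\projTan(\xk)\ne\projTan(\xmin)$ appearing in the actual recursion $\deriv\xkp = \derivx\pgd[k](\xk,\givenPrm)\projTan(\xk)\deriv\xk + \derivu\pgd[k](\xk,\givenPrm)$ still doesn't cleanly fit \eqref{itr:D:FPI:k}. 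I would redo the end of the argument along the paper's lines: write the explicit recursion for $\ek$, observe $\sup_k\norm{B_k}<1$, $C_k\to0$, $\dk\to0$ (linearly under the extra hypotheses, by Theorem~\ref{thm:PGD:Derv}), and cite Theorem~\ref{thm:LGFPI:modified:conv} rather than Theorem~\ref{thm:D:FPI:modified:conv}.
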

\findProofIn{thm:PGD:AD}
\begin{remark} \label{rem:PGD:AD}
    \begin{enumerate}[label=(\roman*)]
        \item Because $\xz$ is not close enough to $\xmin$ in practice, one may resort to late-starting \cite[Section~1.4.2]{MO24}.
        \item In practice, we do not need $\xz$ to be close enough to $\xmin$ because even when the update map $\pgd[k]$ is not differentiable in the earlier iterations of Algorithm~\ref{alg:EPG}, the autograd libraries still yield a finite output as the derivative \cite{BP20, BP21}. Hence, AD of Algorithm~\ref{alg:EPG} can still recover a good estimate of $D\argmap (\u)$ as long as Algorithm~\ref{alg:EPG} is run for sufficiently large number of iterations \cite[Remark~45(i)]{MO24}.
        \item When $\alpha_k$ is generated through line-search methods, it also depends on $\u$ in a possibly non-differentiable way. Therefore, the total derivative of $\xk$ with respect to $\u$ may not make sense. However computing $\deriv\xk (\u)$ by ignoring this dependence --- for instance, in practice, through routines like \verb|stop_gradient| \cite{BFH+18,ABC+16} and \verb|detach| \cite{PGM+19} --- the true derivative is still recovered in the limit provided that conditions of Theorem~\ref{thm:PGD:AD} are met. This fact was first explored in \cite{GBC+93}.
    \end{enumerate}
\end{remark}

\subsection{Accelerated Proximal Gradient}
We similarly compute AD of APG and show (linear) convergence of the corresponding derivative iterates. Given step size $\alpha > 0$ and extrapolation parameter $\beta\in[0, 1]$, we define the update mapping $\map{\apg}{\spVar\times\spVar\times\spPrm}{\spVar\times\spVar}$ by
\begin{equation} \label{eq:FixMap:APG}
    \apg (\z, \u) \coloneqq \left(\pgd \left(\x_1 + \beta (\x_1 - \x_2), \u \right), \x_1 \right) \,,
\end{equation}
for $\z\coloneqq(\x_1, \x_2)\in\spVar\times\spVar$. Just like PGD, APG also exhibit activity identification property and local linear convergence under Assumptions~\ref{ass:CPSO}, \ref{ass:RPD}\ref{itm:RPD-ii}, and \ref{ass:ND} \cite{LFP17}.

\begin{lemma}[Activity Identification and Linear Convergence of APG] \label{lem:APG}
Let $f$ and $g$ satisfy Assumption~\ref{ass:CPSO} and $(\xmin, \givenPrm)\in\manif\times\setPrm$ be such that Assumption~\ref{ass:ND} is satisfied. For $\alpha_k\in[\sslow, \ssup]$ and $\beta_k\in[0, 1]$, let the sequence $\seq[k\in\N]{\xk (\givenPrm)}$ generated by Algorithm~\ref{alg:EPG} converges to $\xmin$. Then there exists $K\in\N$, such that $\xk (\givenPrm)\in\manif$ for all $k\geq K$. Moreover when Assumption~\ref{ass:RPD}\ref{itm:RPD-ii} is also satisfied, $\alpha_k \to \alpha_*$ and $\beta_k \to \beta_*$ such that $-1/(1 + 2\beta_*) < \lambda_{\min} (\derivx \pgd[*] (\xmin, \givenPrm))$, then $\xk (\givenPrm)$ converge linearly to $\xmin$ with rate $\rho (\derivz \apg[*] (\xmin, \xmin, \givenPrm) \projTan (\xmin, \xmin))$.
\end{lemma}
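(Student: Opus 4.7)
The plan is to adapt the proof of Lemma~\ref{lem:PGD} for PGD to the APG setting by lifting the iteration to the product space $\spVar\times\spVar$ and reducing the spectral analysis to a quadratic eigenvalue problem; the argument follows \cite{LFP17}. It splits into the two stated conclusions: finite-time identification of the active manifold $\manif$, and local linear convergence of the product sequence $\zk \coloneqq (\xk, \xkm)$ on $\manif\times\manif$.

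For the identification part, I would first observe that since $\xk\to\xmin$ and $\beta_k\in[0,1]$ is bounded, the extrapolated iterate $\yk = (1+\beta_k)\xk - \beta_k\xkm$ also tends to $\xmin$. The APG update can then be written as $\xkp = \bwd[k](\yk - \alpha_k\grad[\x] f(\yk, \givenPrm), \givenPrm)$, i.e.\ a single proximal-gradient step from $\yk$. Because $\alpha_k\in[\sslow, \ssup]$ is bounded away from $0$ and is applied to a sequence converging to $\xmin$, the Hare--Lewis identification theorem for partly smooth functions (the same ingredient as in Lemma~\ref{lem:PGD}), combined with Assumption~\ref{ass:ND}, yields an index $K\in\N$ beyond which $\xk\in\manif$.

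For the linear convergence I would recast APG as the one-step recursion $\zkp = \apg[k](\zk, \givenPrm)$ using the map from \eqref{eq:FixMap:APG}. Since $\alpha_k\to\alpha_*$ and $\beta_k\to\beta_*$, the maps $\apg[k]$ converge to $\apg[*]$ in a neighbourhood of $(\xmin, \xmin)$, which is a common fixed point. A block computation of $\derivz \apg[*] (\xmin, \xmin, \givenPrm)$ with $A \coloneqq \derivx \pgd[*](\xmin, \givenPrm)$ yields the operator matrix with first row $((1+\beta_*) A,\, -\beta_* A)$ and second row $(\opid,\, 0)$. Restricting to the tangent bundle and exploiting commutativity of the lower blocks with the upper ones reduces the eigenvalue problem for $\derivz \apg[*]\projTan(\xmin, \xmin)$ to the scalar quadratic $\lambda^2 - (1+\beta_*)\mu\lambda + \beta_*\mu = 0$, where $\mu$ ranges over the eigenvalues of $A\projTan(\xmin)$ on $\spTan\xmin$. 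Theorem~\ref{thm:PGD:IFT} already guarantees $\mu<1$; the Jury stability criterion then shows that both roots lie strictly inside the unit disk provided in addition $\mu > -1/(1+2\beta_*)$, which is precisely the assumed bound on $\lambda_{\min}(\derivx \pgd[*])$. Hence $\rho(\derivz \apg[*](\xmin, \xmin, \givenPrm)\projTan(\xmin, \xmin)) < 1$.

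With this spectral bound in hand, local linear convergence of $\zk$ to $(\xmin, \xmin)$ at the stated rate follows from a standard Ostrowski-type argument on $\manif\times\manif$, where each $\apg[k]$ is $C^1$-smooth and converges to $\apg[*]$, so the linearization at the fixed point governs the asymptotic contraction (cf.\ \cite[Section~2.1.2,~Theorem~1]{Pol87}); the rate of $\xk$ is then dominated by that of $\zk$. The main technical obstacle is the spectral step: verifying that the two roots of the quadratic lie in the open unit disk uniformly in $\mu \in (-1/(1+2\beta_*), 1)$ requires a short case analysis on the sign of the discriminant, already carried out in \cite{LFP17}, and this is where I would defer to the cited work.
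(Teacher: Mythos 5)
The paper does not actually prove Lemma~\ref{lem:APG}: it states it and defers to \cite{LFP17}, exactly as it defers Lemma~\ref{lem:PGD} to \cite{LFP14}. There is no corresponding \verb|\appSubSect| block for it in the appendix. So there is no in-paper proof to compare against, and your proposal should be judged as a reconstruction of the cited argument.

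As such, your outline is essentially correct and tracks the route taken in \cite{LFP17}. The identification step is the standard one: $\yk\to\xmin$ because $\beta_k$ is bounded and $\xk\to\xmin$; writing one APG step as a forward--backward step from $\yk$ gives an explicit element of $\partial g(\xkp)$, namely $(\yk-\xkp)/\alpha_k - \grad[\x]f(\yk,\givenPrm)$, which converges to $-\grad[\x]f(\xmin,\givenPrm)$; since $\alpha_k$ is bounded away from zero this sequence is well controlled, and Assumption~\ref{ass:ND} then lets the Hare--Lewis identification theorem fire. The lifting to $\zk=(\xk,\xkm)$, the block linearization of $\apg[*]$ with blocks $((1+\beta_*)A,\ -\beta_*A;\ \opid,\ 0)$, the reduction to the scalar quadratic $\lambda^2-(1+\beta_*)\mu\lambda+\beta_*\mu=0$ by substituting $v_1=\lambda v_2$, and the Jury conditions giving $\mu<1$, $\mu>-1/(1+2\beta_*)$, and $|\beta_*\mu|<1$ are exactly the mechanism in \cite{LFP17}. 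Finally, the passage from $\rho(\derivz\apg[*]\projTan)<1$ together with $\apg[k]\to\apg[*]$ (from $\alpha_k\to\alpha_*$, $\beta_k\to\beta_*$) to the claimed linear rate is precisely the variable-map Ostrowski-type estimate the paper proves in Theorem~\ref{thm:LGFPI:modified:conv}/Corollary~\ref{cor:LGFPI:modified:conv}, so your appeal to \cite[Section~2.1.2,~Theorem~1]{Pol87} is in the right spirit.

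Two small points you should tighten if you were to write this out fully. First, the quadratic reduction implicitly treats $A\projTan(\xmin)$ as if it had real eigenvalues on $\spTan\xmin$; this does hold here because the restricted operator is similar to a symmetric one (this is established in \cite{LFP17}), but it is a step, not a formality, and should be stated. Second, ``exploiting commutativity of the lower blocks'' is a misdescription: the second block row is $(\opid,\,0)$, so the reduction is just the elimination $v_1=\lambda v_2$ — no commutativity is used. Neither of these affects the soundness of the overall argument.
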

Using Theorem~\ref{thm:IFT}, and Lemma~\ref{lem:APG}, we can differentiate $\apg[k]$ near $(\xmin, \xmin, \givenPrm)$ for all $k\in\N$. The following result is mostly derived from \cite[Theorem~39]{MO24}.
\begin{theorem} \label{thm:APG:Derv}
    Let $f$ and $g$ satisfy Assumption~\ref{ass:CPSO} and $(\xmin, \givenPrm)\in\manif\times\setPrm$ be such that Assumption~\ref{ass:ND} are satisfied. For $[\sslow, \ssup] \ni \alpha_k \to \alpha_*$ and $[0, 1]\ni\beta_k \to \beta_*$, let the sequence $\seq[k\in\N]{\xk\coloneqq\xk (\givenPrm)}$ generated by Algorithm~\ref{alg:EPG} converges to $\xmin$. Then there exist $\neighbourhoodAPG[*]\in\neighbourhood[(\xmin, \xmin, \givenPrm, \alpha_*)]$ and $K\in\N$, such that
    \begin{enumerate}[label=(\roman*)]
        \item \label{itm:APG:Derv:Itrs} the mapping $(\z, \u, \alpha) \mapsto \apg(\z, \u)$ defined in \eqref{eq:FixMap:APG} is $C^1$-smooth on $\neighbourhoodAPG[*]$ and $(\zk, \givenPrm, \alpha_k) \in \neighbourhoodAPG[*]$ for all $k\geq K$,
        \item \label{itm:APG:Derv:Conv} $\seq[k\geq K]{\deriv \apg[k] (\zk, \givenPrm) - \deriv \apg[k] (\z^*, \givenPrm)}$ converges to $0$, and
        $\seq[k\geq K]{\deriv \apg[k] (\z^*, \givenPrm)}$ converges to $\deriv \apg[*] (\z^*, \givenPrm)$, and
        \item \label{itm:APG:Derv:Conv:Linear} additionally, when $\manif$ is $C^3$-smooth, $\grad[\manif] g$, $\Hess[\x] f$, $\Hess[\manif] g$, $D_{\u} \grad[\x] f$, and $D_{\u} \grad[\manif] g$ are locally Lipschitz continuous near $(\xmin, \givenPrm)$ and $\seq[k\in\N]{\xk}$ converges linearly, then $\deriv \apg[k] (\zk, \givenPrm) - \deriv \apg[k] (\z^*, \givenPrm) = \O (\xk - \xmin)$,
    \end{enumerate}
    where $\z\coloneqq(\x_1, \x_2)$, $\zk \coloneqq (\xk, \xkm)$, and $\z^*\coloneqq (\xmin, \xmin)$
\end{theorem}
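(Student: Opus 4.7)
The strategy is to reduce the analysis to Theorem~\ref{thm:PGD:Derv} via the composition structure of \eqref{eq:FixMap:APG}: $\apg(\z, \u) = (\pgd(\y, \u), \x_1)$ with $\y = \x_1 + \beta(\x_1 - \x_2)$. Smoothness and Lipschitz behaviour of the extrapolation map are trivial (it is affine and maps $(\xmin, \xmin, \beta_*)$ to $\xmin$), so all nontrivial content comes from the PGD factor, which is controlled by Theorem~\ref{thm:PGD:Derv}. Throughout, I exploit that $\xk, \xkm \to \xmin$ so that $\zk \to \z^*$ and $\yk = (1+\beta_k)\xk - \beta_k\xkm \to \xmin$, that $\alpha_k \to \alpha_*$, $\beta_k \to \beta_*$, and that Lemma~\ref{lem:APG} ensures $\xk \in \manif$ eventually.

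\emph{For \ref{itm:APG:Derv:Itrs}:} Theorem~\ref{thm:PGD:Derv}\ref{itm:PGD:Derv:Itrs} furnishes an open neighbourhood $\mathcal W$ of $(\xmin, \givenPrm, \alpha_*)$ on which $(\x, \u, \alpha) \mapsto \pgd(\x, \u)$ is jointly $C^1$. Because the extrapolation map is continuous in $(\z, \beta)$ and $\beta_k \to \beta_*$, I can choose a product neighbourhood $\neighbourhoodAPG[*]$ of $(\xmin, \xmin, \givenPrm, \alpha_*)$ so that, for every $k$ sufficiently large, the extrapolated input $(\x_1 + \beta_k(\x_1 - \x_2), \u, \alpha)$ lies in $\mathcal W$ whenever $(\z, \u, \alpha) \in \neighbourhoodAPG[*]$. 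Composition and the chain rule then yield $C^1$-smoothness of $\apg[k]$ on $\neighbourhoodAPG[*]$, and $(\zk, \givenPrm, \alpha_k) \in \neighbourhoodAPG[*]$ for large $k$ follows from $\zk \to \z^*$ and $\alpha_k \to \alpha_*$.

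\emph{For \ref{itm:APG:Derv:Conv}:} Chain-rule differentiation of \eqref{eq:FixMap:APG} writes $\deriv \apg[k](\z, \givenPrm)$ as an explicit affine expression in $\deriv \pgd[k](\y, \givenPrm)$ with coefficients depending smoothly on $\beta_k$. The difference $\deriv \apg[k](\zk, \givenPrm) - \deriv \apg[k](\z^*, \givenPrm)$ thus collapses to $\deriv \pgd[k](\yk, \givenPrm) - \deriv \pgd[k](\xmin, \givenPrm)$, which tends to $0$ by the same argument as in Theorem~\ref{thm:PGD:Derv}\ref{itm:PGD:Derv:Conv}: continuity of $\deriv \pgd$ on $\mathcal W$ combined with $(\yk, \alpha_k) \to (\xmin, \alpha_*)$. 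The second assertion, $\deriv \apg[k](\z^*, \givenPrm) \to \deriv \apg[*](\z^*, \givenPrm)$, is continuity of this same affine expression in the parameters $(\alpha, \beta)$ at the fixed evaluation point $(\z^*, \givenPrm)$.

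\emph{For \ref{itm:APG:Derv:Conv:Linear} and main obstacle:} The extra hypotheses upgrade the continuity in the previous step to local Lipschitz continuity in $\x$ uniformly in $\alpha \in [\sslow, \ssup]$, exactly as in Theorem~\ref{thm:PGD:Derv}\ref{itm:PGD:Derv:Conv:Linear}, giving $\deriv \pgd[k](\w, \givenPrm) - \deriv \pgd[k](\xmin, \givenPrm) = \O(\w - \xmin)$ for $\w$ close to $\xmin$. Substituting $\w = \yk$ and using $\yk - \xmin = (1+\beta_k)(\xk - \xmin) - \beta_k(\xkm - \xmin)$ with $\beta_k \in [0, 1]$ yields $\yk - \xmin = \O(\xk - \xmin) + \O(\xkm - \xmin)$, and linear convergence of $\xk$ turns $\xkm - \xmin$ into $\O(\xk - \xmin)$. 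Propagating this bound through the chain rule gives the claimed $\O(\xk - \xmin)$ estimate. The main obstacle is coordinating uniformity in $k$ across the three simultaneous convergences $\zk \to \z^*$, $\alpha_k \to \alpha_*$, $\beta_k \to \beta_*$; compactness of $[\sslow, \ssup]$ and $[0, 1]$ together with the joint $C^1$-smoothness from \ref{itm:APG:Derv:Itrs} is what ultimately secures it.
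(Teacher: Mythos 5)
Your proposal is correct and follows essentially the same route as the paper: both proofs exploit the factorization $\apg(\z,\u) = (\pgd(\y,\u), \x_1)$ with $\y$ the affine extrapolation, so that smoothness, continuity of the derivative, and the Lipschitz-type bound all reduce to the PGD results of Theorem~\ref{thm:PGD:Derv} applied at the extrapolated point $\yk$, with $\beta_k\in[0,1]$ supplying bounded coefficients and $\yk-\xmin = (1+\beta_k)(\xk-\xmin)-\beta_k(\xkm-\xmin)$ converting the estimate into $\O(\xk-\xmin)$.
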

\findProofIn{thm:APG:Derv}
\subsubsection{Implicit Differentiation}
Similarly Theorems~\ref{thm:IFT}, and \ref{thm:APG:Derv} can be used to yield an IFT for the fixed-point equation of APG \cite[Theorem~39]{MO24}, which we recall below.
\begin{theorem} \label{thm:APG:IFT}
    Let $f$ and $g$ satisfy Assumption~\ref{ass:CPSO} and $(\xmin, \givenPrm)\in\manif\times\setPrm$ be such that Assumptions~\ref{ass:RPD}\ref{itm:RPD-ii} and \ref{ass:ND} are satisfied. Then for any $\alpha\in[\sslow, \ssup]$ and $\beta\in[0, 1]$ with $-1/(1 + 2\beta) < \lambda_{\min} (D_{\x} \pgd (\xmin, \u))$, we have $\rho (D_{\z} \apg (\xmin, \xmin, \givenPrm) \projTan (\xmin, \xmin)) < 1$. Additionally when Assumption~\ref{ass:RPD}\ref{itm:RPD-i} is also satisfied, the (possibly reduced) neighbourhood $\neighbourhoodPRM$ and the mapping $\argmap$ from Theorem~\ref{thm:IFT} satisfy $\z = \apg (\z, \u)$ and
    \begin{equation} \label{eq:APG:IFT}
        \begin{bmatrix}
            \deriv \argmap (\u) \\
            \deriv \argmap (\u)
        \end{bmatrix} = \left( \opid - \derivz \apg (\z, \u) \projTan (\z)\right)^{-1} \derivu \apg (\z, \u) \,,
    \end{equation}
    for all $\u\in \neighbourhoodPRM$ and $\z \coloneqq (\argmap (\u), \argmap (\u))$.
\end{theorem}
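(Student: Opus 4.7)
The plan is to tackle the two conclusions in sequence: first the spectral-radius bound, then the fixed-point characterisation and derivative formula. From the definition \eqref{eq:FixMap:APG} the Jacobian of $\apg$ at $\z^\ast \coloneqq (\xmin,\xmin)$ and $\u=\givenPrm$ has the block form
\[
\derivz\apg(\z^\ast,\givenPrm)=\begin{pmatrix}(1+\beta)M & -\beta M\\ \opid & 0\end{pmatrix},\qquad M\coloneqq \derivx\pgd(\xmin,\givenPrm),
\]
and post-composing with $\projTan(\z^\ast)=\mathrm{diag}(\projTan(\xmin),\projTan(\xmin))$ replaces each $M$ by $M\projTan(\xmin)$ and the lower-left $\opid$ by $\projTan(\xmin)$. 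A standard block eigenvalue calculation --- for $\lambda\neq 0$ eliminate the second block via $v_2=\projTan(\xmin)v_1/\lambda$ and exploit $\projTan(\xmin)^2=\projTan(\xmin)$ --- reduces every non-zero eigenvalue to a root of the quadratic $\lambda^2-\mu(1+\beta)\lambda+\mu\beta=0$ for some eigenvalue $\mu$ of $M\projTan(\xmin)$. Theorem~\ref{thm:PGD:IFT} guarantees $|\mu|<1$; testing the quadratic at $\lambda=\pm 1$ (the classical Polyak heavy-ball stability criterion) then shows that both roots lie strictly inside the unit disc precisely when $\mu\in(-1/(1+2\beta),1)$, which is exactly what the extra hypothesis $-1/(1+2\beta)<\lambda_{\min}(\derivx\pgd(\xmin,\givenPrm))$ delivers. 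Hence $\rho(\derivz\apg(\z^\ast,\givenPrm)\projTan(\z^\ast))<1$.

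For the fixed-point characterisation I would observe that $\z=\apg(\z,\u)$ with $\z=(\x_1,\x_2)$ forces $\x_1=\x_2$ from the second block, which reduces the first block to the PGD fixed-point equation $\x_1=\pgd(\x_1,\u)$. Theorem~\ref{thm:PGD:IFT} then yields $\x_1=\argmap(\u)$ on the neighbourhood $\neighbourhoodPRM$ supplied by Theorem~\ref{thm:IFT}, so $\z(\u)=(\argmap(\u),\argmap(\u))$ on $\neighbourhoodPRM$. To derive \eqref{eq:APG:IFT}, differentiate this relation in $\u$. Since Theorem~\ref{thm:IFT}\ref{itm:IFT:ii} keeps $\argmap(\u)\in\manif$ on $\neighbourhoodPRM$, the Jacobian $\deriv\argmap(\u)$ takes values in $\spTan{\argmap(\u)}$, so inserting $\projTan(\z)$ in front of the stacked $\deriv\argmap(\u)$ does not change its value. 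The chain rule then gives
\[
\begin{bmatrix}\deriv\argmap(\u)\\\deriv\argmap(\u)\end{bmatrix}=\derivz\apg(\z,\u)\projTan(\z)\begin{bmatrix}\deriv\argmap(\u)\\\deriv\argmap(\u)\end{bmatrix}+\derivu\apg(\z,\u),
\]
and on a possibly shrunk neighbourhood $\neighbourhoodPRM$ continuity and the spectral-radius bound make $\opid-\derivz\apg(\z,\u)\projTan(\z)$ invertible, so rearrangement produces \eqref{eq:APG:IFT}.

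The main obstacle I anticipate is justifying the insertion of $\projTan(\z)$: this is not a cosmetic move, since $\opid-\derivz\apg(\z^\ast,\givenPrm)$ itself is generically singular because of eigenvalues in the normal directions of $\manif\times\manif$. The argument rests on the fact that activity identification (Lemma~\ref{lem:APG}) together with Assumptions~\ref{ass:CPSO}, \ref{ass:RPD}\ref{itm:RPD-i} and \ref{ass:ND} confines the solution to $\manif$, so $\deriv\argmap(\u)$ is purely tangential --- exactly the step supplied by the partly smooth IFT of \cite{Lew02} behind Theorem~\ref{thm:IFT}. Since the statement notes that the theorem is ``mostly derived from \cite[Theorem~39]{MO24}'', my plan is to invoke that result for the derivative formula and reserve the detailed work for the block reduction and the heavy-ball stability computation.
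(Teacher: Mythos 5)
The paper does not actually prove Theorem~\ref{thm:APG:IFT}: it is recalled verbatim from \cite[Theorem~39]{MO24} without an appendix proof, so there is no in-paper argument to compare against. That said, your route is the natural one and lines up exactly with the building blocks the paper assembles elsewhere: the block Jacobian of $\apg$ you write down is precisely the one displayed in the proof of Theorem~\ref{thm:APG:Derv}\ref{itm:APG:Derv:Conv}, the spectral bound $\rho(\derivx\pgd(\xmin,\givenPrm)\projTan(\xmin))<1$ is exactly Theorem~\ref{thm:PGD:IFT}, and the differentiated fixed-point identity you derive is the same one the paper reuses (without rederivation) in the proof of Theorem~\ref{thm:APG:AD}.

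Two small points are worth tightening when you flesh out the ``detailed work'' you reserve. First, the Schur--Cohn test at $\lambda=\pm1$ presupposes the eigenvalues $\mu$ of $M\projTan(\xmin)$ are real; this does hold because $M=\derivx\pgd(\xmin,\givenPrm)$ has range in $\spTan{\xmin}$ and, from \eqref{eq:PGD:Derv}, $M|_{\spTan{\xmin}}=-\tilde{Q}^{\dagger}\tilde{P}_{\x}|_{\spTan{\xmin}}$ is the product of a symmetric PSD pseudoinverse with the symmetric operator $\projTan(\xmin)(\opid-\alpha\Hess[\x]f)|_{\spTan{\xmin}}$, hence similar to a symmetric operator; and the spectrum of $M\projTan(\xmin)$ is $\{0\}\cup\mathrm{spec}\,(M|_{\spTan{\xmin}})$, with the zero eigenvalues on $\spNor{\xmin}$ causing no trouble since $0>-1/(1+2\beta)$. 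Second, when you conclude the hypothesis gives exactly $\mu\in(-1/(1+2\beta),1)$, note that the upper bound $\mu<1$ comes for free from $|\mu|<1$, but the lower bound $\mu>-1/(1+2\beta)$ is genuinely stronger than $\mu>-1$ for $\beta>0$, which is why the extra hypothesis is not redundant --- you implicitly say this, but it is worth making explicit.
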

\subsubsection{Automatic Differenitation}
Mehmood and Ochs~\cite[see~Theorem~44]{MO24} established the convergence of the derivative iterates of APG. We strengthen their results by providing convergence rate guarantees in our final result below.
\begin{theorem} \label{thm:APG:AD}
    Let $f$ and $g$ satisfy Assumption~\ref{ass:CPSO} and $(\xmin, \givenPrm)\in\manif\times\setPrm$ be such that Assumption~\ref{ass:ND} are satisfied. For $[\sslow, \ssup] \ni \alpha_k \to \alpha_*$ and $[0, 1]\ni\beta_k \to \beta_*$, let the sequence $\seq[k\in\N]{\xk (\givenPrm)}$ generated by Algorithm~\ref{alg:EPG} converges to $\xmin$ with $\xz$ sufficiently close to $\xmin$. Then the sequence $\seq[k \in\N]{\deriv \xk (\givenPrm)}$ converges to $D \argmap(\givenPrm)$. Additionally $\seq[k \in\N]{\deriv \xk (\givenPrm)}$ converges linearly with rate $\max(q_{\x}, \rho (\derivz \apg[*] (\xmin, \givenPrm)))$ when $\manif$ is $C^3$-smooth, $\grad[\manif] g$, $\Hess[\x] f$, $\Hess[\manif] g$, $D_{\u} \grad[\x] f$, and $D_{\u} \grad[\manif] g$ are locally Lipschitz continuous near $(\xmin, \givenPrm)$ and $\seq[k\in\N]{\xk(\givenPrm)}$ converges with rate $q_{\x} < 1$.
\end{theorem}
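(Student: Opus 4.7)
The plan is to recast APG as a one-step fixed-point iteration on the product space $\spVar\times\spVar$ with iterate $\zk = (\xk, \xkm)$ and update map $\apg[k]$ defined in \eqref{eq:FixMap:APG}, and then apply Corollary~\ref{cor:D:FPI:modified:conv:(linear)} (the version of Theorems~\ref{thm:D:FPI:modified:conv} and~\ref{thm:D:FPI:modified:conv:linear} that allows the pointwise limit $\fixMap = \apg[*]$ arising from $\alpha_k \to \alpha_*$ and $\beta_k \to \beta_*$). Because $\xk(\givenPrm) \to \xmin$, the product iterates converge to $\z^* := (\xmin, \xmin)$; since the forward-mode AD recursion for $\xk$ is the first block of the AD recursion for $\zk$, showing $\deriv \zk(\givenPrm)$ converges (linearly) to the appropriate $X_*$ will give the desired conclusion for $\deriv \xk(\givenPrm)$.

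First I would verify Assumption~\ref{ass:basic:modified} for the family $\{\apg[k]\}$. Smoothness~\ref{itm:basic:modified:C1} is provided by Theorem~\ref{thm:APG:Derv}\ref{itm:APG:Derv:Itrs}, and iterate convergence~\ref{itm:basic:modified:iterates} by Lemma~\ref{lem:APG}. For~\ref{itm:basic:modified:fixed:point}, the identity $\z^* = \apg[k](\z^*, \givenPrm)$ is immediate from the fact that $\xmin$ is a fixed point of $\pgd[k](\cdot, \givenPrm)$ for every $k$ (by optimality, independently of $\alpha_k$). The common fixed point of the affine maps can be taken to be $X_* = \begin{bmatrix} D\argmap(\givenPrm) \\ D\argmap(\givenPrm) \end{bmatrix}$; using the block structure of $\derivz \apg[k]$ and $\derivu \apg[k]$ coming from \eqref{eq:FixMap:APG}, the fixed-point condition reduces to the PGD identity $D\argmap(\givenPrm) = \derivx \pgd[k](\xmin, \givenPrm) D\argmap(\givenPrm) + \derivu \pgd[k](\xmin, \givenPrm)$, which follows from Theorem~\ref{thm:PGD:IFT} together with the tangency $\projTan(\xmin) D\argmap(\givenPrm) = D\argmap(\givenPrm)$ (so the projector in \eqref{eq:PGD:IFT} is absorbed); this relation holds for every $k$ because the right-hand side at $\xmin$ is step-size independent on $\spTan\xmin$ by Theorem~\ref{thm:IFT}. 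The replacement of~\ref{itm:basic:modified:spectral:norm} in the Corollary holds: Theorem~\ref{thm:APG:Derv}\ref{itm:APG:Derv:Conv} gives $\derivz \apg[k](\z^*, \givenPrm) \to \derivz \apg[*](\z^*, \givenPrm)$, and Theorem~\ref{thm:APG:IFT} combined with the condition $\alpha_k \to \alpha_*, \beta_k \to \beta_*$ and activity identification gives the required strict spectral-radius bound on the action relevant to the asymptotic iterates.

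Assumption~\ref{ass:convergence:modified} is exactly Theorem~\ref{thm:APG:Derv}\ref{itm:APG:Derv:Conv}, so Corollary~\ref{cor:D:FPI:modified:conv:(linear)} yields $\deriv \zk(\givenPrm) \to X_*$, and reading off the first block gives $\deriv \xk(\givenPrm) \to D\argmap(\givenPrm)$. For the linear rate, Theorem~\ref{thm:APG:Derv}\ref{itm:APG:Derv:Conv:Linear} furnishes the big-$\O$ estimate $\deriv \apg[k](\zk, \givenPrm) - \deriv \apg[k](\z^*, \givenPrm) = \O(\xk - \xmin)$, which together with linear convergence of $\xk$ (hence of $\zk$) with rate $q_\x$ verifies Assumption~\ref{ass:linear:convergence:modified}; Theorem~\ref{thm:D:FPI:modified:conv:linear} (via the Corollary) then delivers the rate $\max(q_\x, \rho(\derivz \apg[*](\xmin, \givenPrm)) + \delta)$ for arbitrarily small $\delta > 0$, matching the statement.

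The main obstacle is Assumption~\ref{ass:basic:modified}\ref{itm:basic:modified:fixed:point}: the APG update map is genuinely $k$-dependent through both $\alpha_k$ and $\beta_k$, so it is not a priori obvious that the affine recursions $X \mapsto \derivz \apg[k](\z^*, \givenPrm) X + \derivu \apg[k](\z^*, \givenPrm)$ all share a common fixed point. Resolving this requires the step-size invariance of the PGD derivative identity at $\xmin$ on the tangent space and the peculiar block structure of $\apg[k]$ that lets the $\beta_k$-terms cancel on the diagonal ansatz $X_* = \begin{bmatrix} D\argmap \\ D\argmap \end{bmatrix}$. A related subtlety is reconciling the spectral-radius bound in Theorem~\ref{thm:APG:IFT}, which is stated for the projected Jacobian, with the unprojected $\rho(\derivz \apg[*])$ appearing in the rate; this is handled by observing that after activity identification $\deriv \zk$ is asymptotically tangent to $\manif\times\manif$, so the projected and unprojected actions agree on the relevant subspace.
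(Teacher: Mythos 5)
Your proposal is correct and follows essentially the same route as the paper: recast APG as a product-space fixed-point iteration with $\zk = (\xk, \xkm)$, identify $X_* = (D\argmap(\givenPrm), D\argmap(\givenPrm))$ as the common fixed point (the block structure forces the $\beta_k$-terms to cancel, reducing to the $\alpha$-independent PGD identity at $\xmin$), pull in Theorem~\ref{thm:APG:Derv} for smoothness and the approximation estimates $\deriv\apg[k](\zk,\givenPrm)-\deriv\apg[k](\z^*,\givenPrm)\to 0$ (resp.\ $=\O(\xk-\xmin)$), Theorem~\ref{thm:APG:IFT} for the spectral bound, and conclude via the Appendix~A convergence machinery. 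The one small structural difference is that the paper does not route through Corollary~\ref{cor:D:FPI:modified:conv:(linear)}: because the derivative recursion on the manifold reads $D\zkp = \derivz\apg[k](\zk,\givenPrm)\projTan(\zk)D\zk + \derivu\apg[k](\zk,\givenPrm)$, which does not literally match \eqref{itr:D:FPI:k}, the paper instead sets up $\ek, B_k, C_k, \dk$ by hand with the projector $\projTan(\zk)$ absorbed into $B_k$ and $C_k$ and then directly invokes the low-level Corollary~\ref{cor:LGFPI:modified:conv}. Your plan to apply the high-level corollary works, but it requires the reconciliation step you sketch at the end — that the iterated derivatives become tangent after activity identification so the projector is asymptotically a no-op — to be made rigorous; the paper sidesteps this by baking the projector into the linear-algebra reduction from the start.
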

\findProofIn{thm:APG:AD}
\begin{remark}
    The arguments made in Remark~\ref{rem:PGD:AD} naturally extend to Theorem~\ref{thm:APG:AD}.
\end{remark}

\section{Experiments} \label{sec:exp}

\begin{figure}
    \centering
    \includegraphics[width=0.96\linewidth]{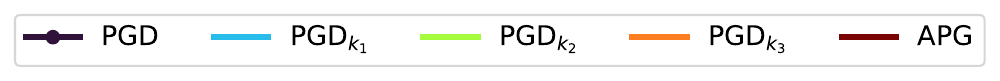}
    \includegraphics[width=0.48\linewidth]{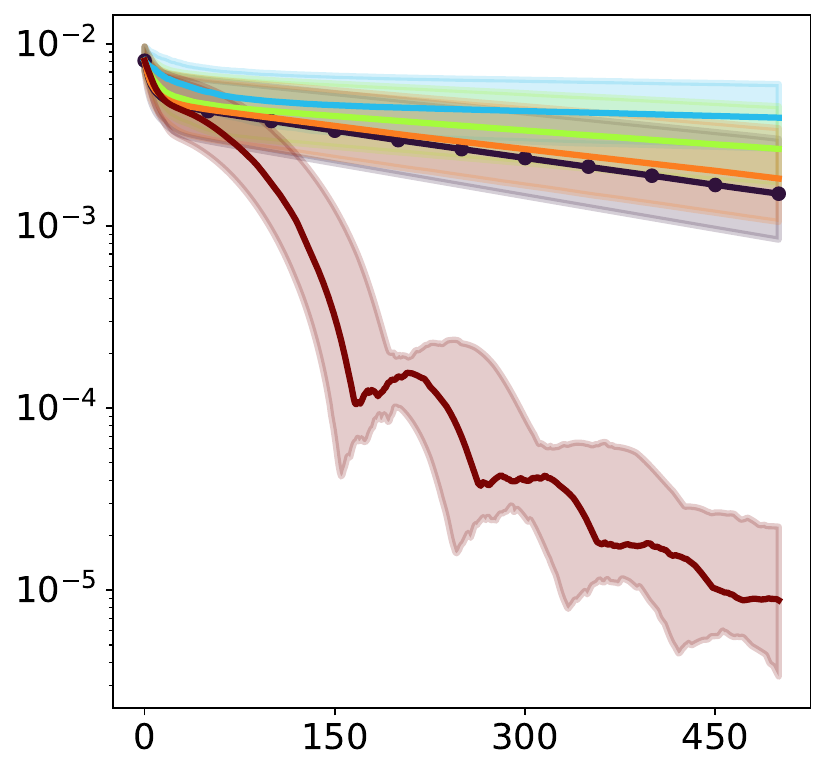}
    \includegraphics[width=0.48\linewidth]{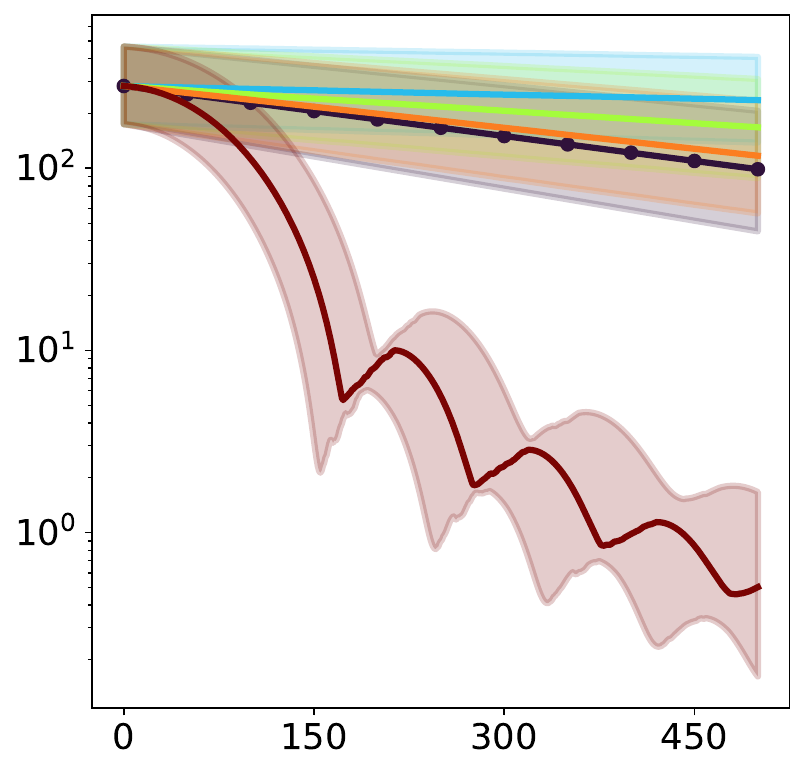}
    \includegraphics[width=0.48\linewidth]{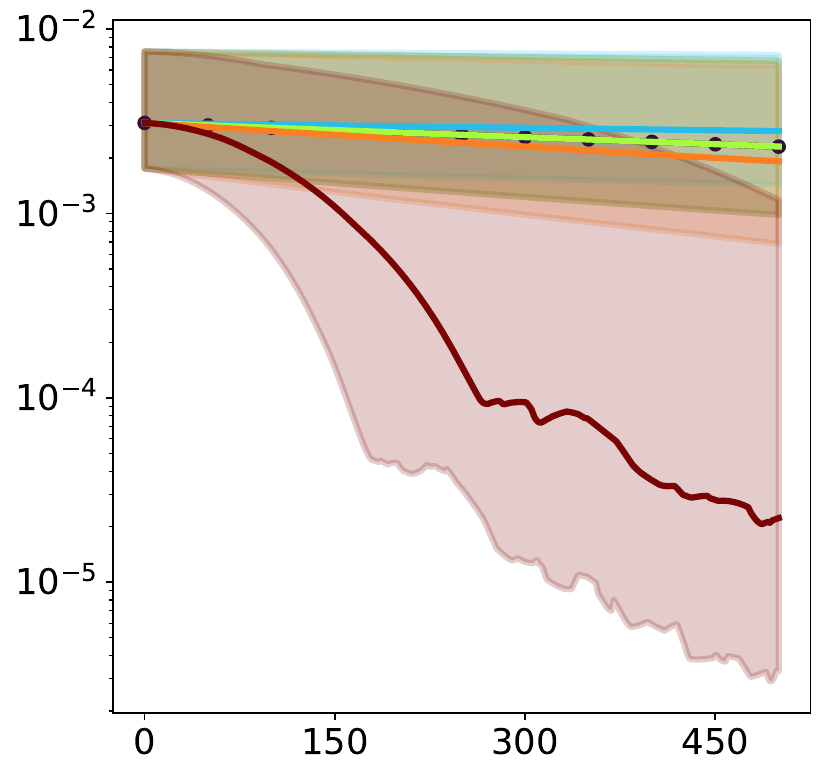}
    \includegraphics[width=0.48\linewidth]{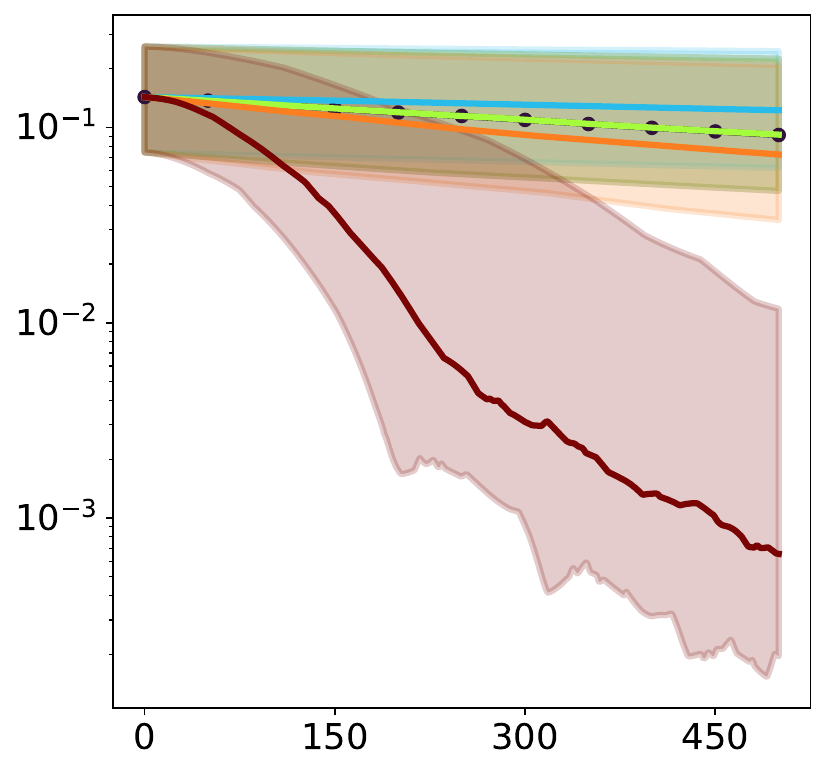}
    \caption{Error plots of iterates (left column) of PGD and APG for Logistic (top row) and Lasso (bottom row) Regression along with their derivative iterates (right column). Similarity in the convergence rates of the original and the derivative iterates is clearly visible.}
    \label{fig:exp}
\end{figure}
To test our results, we provide numerical demonstration on one smooth and one non-smooth example from classical Machine Learning. These include logistic regression with $\ell_2$ regularization, that is,
\begin{equation} \label{prob:logistic}
    \min_{\x} \frac{1}{M} \sum_{i=1}^{M} \log \left(1 + \exp (-b_i \mathbf{a}_i^T \x)\right) + \frac{1}{2} \lambda\norm[2]{\x}^2 \,,
\end{equation}
with parameters $\u \coloneqq (A, \lambda)$ and linear regression with $\ell_1$ regularization, that is,
\begin{equation} \label{prob:lasso}
    \min_{\x} \frac{1}{2} \norm[2]{A\x - \b} + \lambda\norm[1]{\x} \,,
\end{equation}
with parameters $\u \coloneqq (A, \b, \lambda)$. All the parameters are selected in such a way that Assumptions~\ref{ass:CPSO}--\ref{ass:ND} are satisfied. 

We solve the two problems through PGD with four different choices of step sizes and APG with fixed step size and $\beta_k \coloneqq (k-1) / (k+5)$ (depicted by $\APG$ in Figure~\ref{fig:exp}). This generates five different algorithm sequences $\seq[k\in\N]{\xk (\u)}$ for each problem. The four step size options for PGD include $\alpha_k\coloneqq\alpha_*$ (depicted by $\PGD$ in Figure~\ref{fig:exp}), $\alpha_k \sim U (0, \frac{2}{3L})$ (depicted by $\PGD_{k_1}$), $\alpha_k \sim U (\frac{2}{3L}, \frac{4}{3L})$ (depicted by $\PGD_{k_2}$), and $\alpha_k \sim U (\frac{4}{3L}, \frac{2}{L})$ (depicted by $\PGD_{k_3}$) where $\alpha_*$ is the optimal step size for each problem. We solve each problem for $50$ different instances of the parameters and plot the median error. For each problem, we initialize $\xz \in B_{10^{-2}} (\xmin)$ by first solving it partially.

In Figure~\ref{fig:exp}, the left column shows the median error plots of the five algorithms and the right column shows the errors of the corresponding derivatives with the same colour. The top and bottom rows correspond to the error plots for \eqref{prob:logistic} and \eqref{prob:lasso} respectively. The figure clearly shows that the derivative error decays as fast as the algorithm error for each problem.

\section{Conclusion}
\label{sec:Conc}

We applied automatic differentiation on the iterative processes with time-varying update mappings. We strengthened a previous result \cite{Bec94} with convergence rate guarantee and extended them to a new setting. As an example for each setting, we adapted our results to proximal gradient descent with variable step size and its accelerated counterpart, that is, FISTA. We showed that the convergence rate of the algorithm is simply mirrored in its derivative iterates which was supported through experiments on toy problems.


\appendix

\section{Preliminaries \& Related Work} \label{sec:pre:supp}
Before we move on to the proofs of our main results, we present some preliminary results  which will be useful later. We also provide a recap of the results of \cite{Bec94} for a better understanding of our work.
\subsection{Matrix Analysis} \label{ssec:pre:Mat:Ana}
This section provides a few preliminary results on Matrix Analysis. We start by recalling a classical result from Linear Algebra which forms the foundation for the implicit differentiation applied to \eqref{eq:FPE} when combined with the Implicit Function Theorem \cite[Theorem~1B.1]{DR09}.
\begin{lemma} \label{lem:spec_rad:inv_op}
    For any linear operator $\map{B}{\spVar}{\spVar}$ with $\rho (B) < 1$, the linear operator $\opid - B$ is invertible.
\end{lemma}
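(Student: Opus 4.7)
The plan is to construct the inverse of $\opid - B$ explicitly via the Neumann series $\sum_{k=0}^{\infty} B^{k}$, whose convergence is guaranteed by the spectral bound $\rho(B) < 1$. The key analytical input is Gelfand's spectral radius formula $\rho(B) = \lim_{k\to\infty} \norm{B^{k}}^{1/k}$, valid for any sub-multiplicative operator norm on the (finite-dimensional) Euclidean space $\spVar$.

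First, I would pick $r$ with $\rho(B) < r < 1$ and invoke Gelfand's formula to obtain $K \in \N$ with $\norm{B^{k}} \leq r^{k}$ for all $k \geq K$. This geometric bound shows that the partial sums $S_{n} \coloneqq \sum_{k=0}^{n} B^{k}$ form a Cauchy sequence in the (complete) space of linear operators on $\spVar$, producing a well-defined limit $S \coloneqq \sum_{k=0}^{\infty} B^{k}$. Next I would exploit the telescoping identities
\begin{equation*}
    (\opid - B)\, S_{n} \;=\; S_{n}\,(\opid - B) \;=\; \opid - B^{n+1},
\end{equation*}
and pass to the limit $n \to \infty$. Since $\norm{B^{n+1}} \leq r^{n+1} \to 0$, continuity of operator composition yields $(\opid - B)\,S = S\,(\opid - B) = \opid$, so $S$ is a two-sided inverse of $\opid - B$.

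This is a textbook fact, so there is no genuine obstacle; only Gelfand's formula needs to be invoked, and it is standard enough to cite. Since the paper works in Euclidean (hence finite-dimensional) spaces, I would alternatively mention the slick one-line argument: if $(\opid - B)x = 0$ for some $x \neq 0$, then $1$ is an eigenvalue of $B$, contradicting $\rho(B) < 1$; injectivity in finite dimensions forces invertibility. I would likely include the Neumann-series construction anyway, since the explicit inverse is the form later used when applying the Implicit Function Theorem to the fixed-point equation \eqref{eq:FPE}.
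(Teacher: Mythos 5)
The paper does not actually give a proof of this lemma: it is stated as a recalled ``classical result from Linear Algebra'' with no accompanying \texttt{proof} environment, so there is nothing to compare against. Your two arguments are both correct. The one-line eigenvalue argument — if $(\opid - B)x = 0$ with $x \neq 0$ then $1$ would be an eigenvalue of $B$, contradicting $\rho(B) < 1$, and injectivity implies surjectivity in finite dimensions — is the most economical and is almost certainly what the authors have in mind, since $\spVar$ is Euclidean by the paper's standing convention. The Neumann-series route is also valid and, as you note, has the minor advantage of exhibiting the inverse $\sum_{k \geq 0} B^k$ explicitly, which connects naturally to the resolvent $(\opid - \derivx\fixMap)^{-1}$ appearing in \eqref{eq:FPE:IFT:intro} and \eqref{eq:FPE:IFT}; one small caveat is that your appeal to Gelfand's formula, while correct, is slightly heavier machinery than needed — in finite dimensions it suffices to pick any sub-multiplicative norm $\norm[\delta]{\cdot}$ with $\norm[\delta]{B} \leq \rho(B) + \delta < 1$ (as the paper itself does in the proof of Corollary~\ref{cor:LGFPI:modified:conv}) and observe directly that $\norm[\delta]{B^k} \leq \norm[\delta]{B}^k \to 0$.
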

We now provide some results which will be used in the proofs of convergence of automatic differentiation of the fixed-point iterations. In particular, we provide convergence guarantees of sequences generated through linear iterative procedures including \eqref{itr:LGFPI}, and \eqref{itr:LGFPI:modified} which bear strong resemblance with \eqref{itr:D:FPI} and \eqref{itr:D:FPI:k} (see the proofs of Theorems~\ref{thm:D:FPI:conv} and \ref{thm:D:FPI:modified:conv} for a more detailed comparison). These convergence results are mainly derived from \cite[Section~2.1.2,~Theorem~1]{Pol87}, \cite[Proposition~2.7]{Rii20}, and \cite[Theorem~8]{MO24}. The following theorem will be used to prove convergence results in Section~\ref{sec:FPI:k}.
\begin{theorem} \label{thm:LGFPI:modified:conv}
    Let $\seq[k\in\N_0]{B_k}$, $\seq[k\in\N_0]{C_k}$ and $\seq[k\in\N_0]{\dk}$ be sequences in $\spLin(\spVar, \spVar)$, $\spLin(\spVar, \spVar)$ and $\spVar$ respectively such that $C_k \to 0$, $\dk \to 0$ and $\rho\coloneqq \limsup_{k\to\infty} \norm{B_k} < 1$ where $\norm{\cdot}$ is a norm on $\spLin(\spVar, \spVar)$ induced by some vector norm. Then the sequence $\seq[k\in\N_0]{\ek}$, with $\ez\in \spVar$, generated by
    \begin{equation} \label{itr:LGFPI:modified}
        \ekp \coloneqq B_k \ek + C_k\ek + \dk \,,
    \end{equation}
    converges to $0$. The convergence is linear when $C_k$ and $\dk$ converge linearly with rates $q_{C}$ and $q_{\d}$ respectively. In fact, for all $\eps \in (0, 1 - \rho)$, there exist $C_1(\eps)$, $C_2(\eps)$ and $K\in\N$, such that for all $k\geq K$, we have
    \begin{equation} \label{eq:LGFPI:conv_rates:modified}
        \norm{\ek} \leq C_1 (\eps) (k-K) q^{k-K} + C_2 (\eps) (\rho + \eps)^{k-K} \,,
    \end{equation}
    where $q\coloneqq \max (\rho + \eps, q_{C}, q_{\d})$.
\end{theorem}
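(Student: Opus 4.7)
The approach is to reduce the recursion \eqref{itr:LGFPI:modified} to a scalar contraction-with-perturbation inequality, unroll it, and use convolution estimates to track the decay rate. First, I would fix $\eps \in (0, 1-\rho)$ and pick $K \in \N$ large enough that $\norm{B_k} \leq \rho + \eps/2$ (by the $\limsup$ hypothesis) and $\norm{C_k} \leq \eps/2$ (since $C_k \to 0$) hold simultaneously for all $k \geq K$. Taking norms in \eqref{itr:LGFPI:modified} then yields $\norm{\ekp} \leq (\rho+\eps)\norm{\ek} + \norm{\dk}$ for all $k \geq K$, and unrolling from $K$ to $K+n$ gives
\begin{equation*}
    \norm{\e ^{(K+n)}} \leq (\rho+\eps)^n \norm{\e ^{(K)}} + \sum_{i=0}^{n-1} (\rho+\eps)^{n-1-i}\, \norm{\d ^{(K+i)}} \,.
\end{equation*}

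For the convergence claim, the homogeneous term decays geometrically and the convolution of the geometric kernel with the null sequence $\norm{\d ^{(k)}}\to 0$ tends to zero by a standard $\eps/2$-splitting of the sum (bound the first half by the uniform tail of $\norm{\d ^{(k)}}$ and the second half by the geometric tail of $(\rho+\eps)^{\cdot}$). For the linear rate, I would substitute $\norm{\d ^{(k)}} \leq D_0 q_d^k$ and evaluate the resulting sum using the uniform estimate $\sum_{i=0}^{n-1} a^{n-1-i} b^i \leq n \max(a,b)^{n-1}$; this produces the $(k-K)\,q^{k-K}$ contribution with $q=\max(\rho+\eps, q_d)$, while the homogeneous term supplies the clean geometric $(\rho+\eps)^{k-K}$ contribution. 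The rate $q_C$ enters the maximum in $q$ when the $C_k \ek$ perturbation is tracked separately rather than absorbed into the contraction: using $\norm{C_k \ek} \leq C_0 q_C^k \norm{\ek}$ and a bootstrap on the already-established decay of $\norm{\ek}$ produces an analogous convolution term of rate $\max(\rho+\eps, q_C)$, which is then folded into the same two-term bound.

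The main obstacle I anticipate is the resonance case where one of the forcing rates coincides with the contraction rate $\rho+\eps$: the naive closed-form $\sum_{i=0}^{n-1} (\rho+\eps)^{n-1-i} q_d^i = ((\rho+\eps)^n - q_d^n)/((\rho+\eps) - q_d)$ breaks down as one divides by zero, and this is precisely what forces the $(k-K)$ polynomial prefactor into the statement. Handling it uniformly through the inequality $\sum_{i=0}^{n-1} a^{n-1-i} b^i \leq n \max(a,b)^{n-1}$ (valid for all $a,b \geq 0$) is the key step that shapes the two-term bound in \eqref{eq:LGFPI:conv_rates:modified}, with the pure geometric term $(\rho+\eps)^{k-K}$ originating from the initial-value contribution and the resonance-safe term $(k-K)\,q^{k-K}$ originating from the forcing.
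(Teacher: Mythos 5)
Your proof is correct and takes essentially the same approach as the paper: take norms, unroll the resulting scalar recursion, and bound the convolution sum via the resonance-safe inequality $\sum_{i=0}^{n-1} a^{n-1-i}b^{i} \leq n\max(a,b)^{n-1}$. The one point of difference is that for the linear rate the paper keeps $\epsk \coloneqq C_k\ek + \dk$ as the forcing term (using boundedness of $\ek$ to obtain its decay rate $\max(q_C,q_{\d})$, which is why $q_C$ enters $q$), whereas your absorption of $C_k$ into the contraction delivers the marginally sharper rate $\max(\rho+\eps,q_{\d})\leq q$, which of course still implies \eqref{eq:LGFPI:conv_rates:modified}.
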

\begin{proof}
    From classical analysis, given $\eps \in (0, 1-\rho)$, there exists $K\in\N$ such that $\norm{B_k} < \rho + \eps/2$ for all $k\geq K$. Also, with $K$ large enough we have $\norm{C_k\ek} / \norm{\ek} \leq \eps/2$ since $C_k\to0$. Therefore, from \eqref{itr:LGFPI:modified}, we get
    \begin{equation*}
        \begin{aligned}
            \norm{\ekp} &\leq (\rho + \epsilon/2) \norm\ek + \norm{C_k\ek} + \norm\dk \\
            &\leq (\rho + \epsilon) \norm\ek + \norm\dk \,.
        \end{aligned}
    \end{equation*}
    The convergence of $\ek$ then follows from the arguments following Equation~34 in the proof of Theorem~8 in \cite{MO24}.

    For the rate of convergence, we note that $\epsk \coloneqq C_k\ek + \dk$ converges linearly to $0$ with rate $\max(q_{C}, q_{\d})$ and for $K\in\N$ sufficiently large we can write $\norm{\epsk} \leq \max(q_{C}, q_{\d})^{k-K} \norm{\epsK}$ for all $k\geq K$. Thus, for any $k\geq K$, we expand the expression $\ekp = B_k\ek + \epsk$ to obtain
    \begin{equation*}
        \ekp = B_{k:K}\eK + \sum_{i=K}^{k} B_{k:i+1} \epsi
    \end{equation*}
    where $B_{k:i}$ denotes the ordered product $B_k B_{k-1} \cdots B_{i+1} B_i$ when $i\leq k$ and identity operator $\opid$ when $i > k$. Observe that, for any $\eps \in (0, 1-\rho)$ and $K\in\N$ large enough, $\norm{B_k} < \rho + \eps$ for all $k\geq K$ and therefore for any $K\leq i\leq k+1$, we have $\norm{B_{k:i}} \leq (\rho+\eps)^{k-i+1}$. Therefore, setting $q \coloneqq \max (\rho + \eps, q_C, q_{\d})$, we end up with
    \begin{equation} \label{ineq:LGFPI:lin:modified}
        \begin{aligned}
            \norm{\ekp} &\leq \norm{B_{k:K}} \norm{\eK} + \sum_{i=K}^{k} \norm{B_{k:i+1}} \norm{\epsi} \\
            &\leq \norm{\eK} (\rho + \eps)^{k-K+1} + \sum_{i=K}^{k} (\rho + \eps)^{k-i} \norm{\epsK} \max(q_C, q_{\d})^{i-K} \\
            &\leq \norm{\eK} (\rho + \eps)^{k-K+1} + \norm{\epsK} \sum_{i=K}^{k} q^{k-K} \\
            &\leq \norm{\eK} (\rho + \eps)^{k-K+1} + \norm{\epsK} (k-K+1) q^{k-K} \,.
        \end{aligned}
    \end{equation}
\end{proof}
The result below is a direct consequence of the above theorem and will find its use when proving Corollary~\ref{cor:D:FPI:modified:conv:(linear)}.
\begin{corollary} \label{cor:LGFPI:modified:conv}
    The conclusion of Theorem~\ref{thm:LGFPI:modified:conv} also holds when the assumptions on the sequence $\seq[k\in\N_0]{B_k}$ are replaced with $B_k\to B\in\spLin (\spVar, \spVar)$ and $\rho (B) < 1$.
\end{corollary}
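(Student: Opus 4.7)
The plan is to reduce Corollary~\ref{cor:LGFPI:modified:conv} to Theorem~\ref{thm:LGFPI:modified:conv} by choosing a clever norm on $\spLin(\spVar,\spVar)$. The key classical fact I would invoke is the Householder/Gelfand-type result: for any linear operator $B$ on a finite-dimensional space and any $\eta>0$, there exists a vector norm $\vnorm{\cdot}_\eta$ on $\spVar$ whose induced operator norm satisfies $\vnorm{B}_\eta \leq \rho(B)+\eta$. Since we are told $\rho(B)<1$, I would fix $\eta>0$ small enough so that $\rho(B)+\eta<1$, and work with the norm $\vnorm{\cdot}_\eta$ throughout.

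Next, since $\spVar$ is finite-dimensional, all norms are equivalent, so the convergence hypotheses carry over unchanged: $B_k\to B$, $C_k\to 0$, and $\dk\to 0$ in $\vnorm{\cdot}_\eta$ (and for the linear-rate version, with the same rates up to multiplicative constants). Continuity of the norm then gives
\begin{equation*}
    \limsup_{k\to\infty}\vnorm{B_k}_\eta \;=\; \vnorm{B}_\eta \;\leq\; \rho(B)+\eta \;<\; 1 \,.
\end{equation*}
Thus the hypotheses of Theorem~\ref{thm:LGFPI:modified:conv} are satisfied with the norm $\vnorm{\cdot}_\eta$ and with the parameter $\rho$ of that theorem bounded by $\rho(B)+\eta$. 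Applying the theorem yields $\ek\to 0$, proving the qualitative conclusion.

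For the convergence rate, I would feed the bound from Theorem~\ref{thm:LGFPI:modified:conv} through the chosen norm: for any prescribed $\eps\in(0,1-\rho(B))$, pick $\eta\in(0,\eps)$ and apply the theorem with the norm $\vnorm{\cdot}_\eta$ and tolerance $\eps-\eta$; the resulting bound involves $(\rho(B)+\eta)+(\eps-\eta)=\rho(B)+\eps$, giving a rate governed by $\max(\rho(B)+\eps,q_C,q_{\d})$ as required. The main (minor) obstacle is simply bookkeeping the $\eps$/$\eta$ split so the final rate is expressed in terms of $\rho(B)$ rather than the auxiliary $\vnorm{B}_\eta$, together with absorbing the norm-equivalence constants into the multiplicative constants $C_1(\eps),C_2(\eps)$.
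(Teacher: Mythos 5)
Your argument is correct and follows essentially the same route as the paper: both invoke the classical fact that for any $\delta>0$ there is a vector-induced operator norm with $\norm[\delta]{B}\leq\rho(B)+\delta$, then pass to $\limsup_k\norm[\delta]{B_k}=\norm[\delta]{B}<1$ by continuity, and apply Theorem~\ref{thm:LGFPI:modified:conv} in that norm. Your explicit $\eps/\eta$ bookkeeping and the remark about absorbing norm-equivalence constants into $C_1,C_2$ simply spell out what the paper's one-line proof leaves implicit.
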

\begin{proof}
    This is a special case of Theorem~\ref{thm:LGFPI:modified:conv} because for any $\delta \in (0, 1-\rho (B))$, there exists a norm on $\spLin (\spVar, \spVar)$ induced by some vector norm, both denoted by $\norm[\delta]{\cdot}$, such that $\norm[\delta]{B}\leq\rho (B) + \delta$ and $\rho\coloneqq\lim_{k\to\infty} \norm[\delta]{B_k} = \norm[\delta]{B} \leq \rho (B) + \delta<1$.
\end{proof}
The following result is a special case of the setting of Corollary~\ref{cor:LGFPI:modified:conv} and is straightforward to show.
\begin{corollary} \label{cor:cor:LGFPI:modified:conv}
    The conclusion of Theorem~\ref{thm:LGFPI:modified:conv} also holds when the assumptions on the sequence $\seq[k\in\N_0]{B_k}$ are replaced with $B_k \coloneqq B\in\spLin (\spVar, \spVar)$ for all $k\in\N$ and $\rho (B) < 1$.
\end{corollary}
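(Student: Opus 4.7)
The plan is a one-line reduction: Corollary~\ref{cor:cor:LGFPI:modified:conv} is the special case of Corollary~\ref{cor:LGFPI:modified:conv} in which the sequence $\seq[k\in\N_0]{B_k}$ happens to be constant. Since $B_k \coloneqq B$ for every $k \in \N$, the constant sequence trivially converges to $B$, and the spectral-radius assumption $\rho(B) < 1$ is already supplied. Hence the hypotheses of Corollary~\ref{cor:LGFPI:modified:conv} are met, and its conclusion---which is exactly that of Theorem~\ref{thm:LGFPI:modified:conv}---applies to the sequence $\seq[k\in\N_0]{\ek}$ defined by \eqref{itr:LGFPI:modified}.

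If a direct reduction to Theorem~\ref{thm:LGFPI:modified:conv} itself is preferred, the plan is to invoke the standard matrix-analysis fact already used inside the proof of Corollary~\ref{cor:LGFPI:modified:conv}: for any $\delta \in (0, 1 - \rho(B))$ there exists a vector norm on $\spVar$ whose induced operator norm $\norm[\delta]{\cdot}$ on $\spLin(\spVar, \spVar)$ satisfies $\norm[\delta]{B} \leq \rho(B) + \delta < 1$. With $B_k \equiv B$ one then has $\limsup_{k\to\infty}\norm[\delta]{B_k} = \norm[\delta]{B} < 1$, which is exactly the hypothesis Theorem~\ref{thm:LGFPI:modified:conv} places on the $B_k$-sequence. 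Applying that theorem with $\norm{\cdot} = \norm[\delta]{\cdot}$ then delivers both the convergence of $\ek$ to $0$ and, under the additional linear-convergence assumptions on $C_k$ and $\dk$, the rate bound \eqref{eq:LGFPI:conv_rates:modified}.

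No real obstacle is expected: the entire argument rests on the Gelfand-type norm construction already cited in the proof of Corollary~\ref{cor:LGFPI:modified:conv}, and preservation of the linear rate is automatic because the bound in \eqref{ineq:LGFPI:lin:modified} depends on $B_k$ only through $\norm{B_k}$, which is uniformly controlled by $\rho(B) + \delta$ in the constant-$B$ case. The only point that deserves a brief sentence in a written-out proof is the observation that the conclusion of Corollary~\ref{cor:LGFPI:modified:conv} coincides verbatim with that of Theorem~\ref{thm:LGFPI:modified:conv}, so no work is needed to transfer the rate constants $C_1(\eps)$ and $C_2(\eps)$.
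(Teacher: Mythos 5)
Your first paragraph is exactly the paper's argument: the paper itself states that Corollary~\ref{cor:cor:LGFPI:modified:conv} is a special case of Corollary~\ref{cor:LGFPI:modified:conv} (constant sequence $B_k \equiv B$ trivially converges to $B$ with $\rho(B)<1$) and leaves it at that. Your alternative reduction directly to Theorem~\ref{thm:LGFPI:modified:conv} via the norm $\norm[\delta]{\cdot}$ with $\norm[\delta]{B}\leq\rho(B)+\delta<1$ is also correct and is in fact the very argument used inside the proof of Corollary~\ref{cor:LGFPI:modified:conv}, so both routes are sound and consistent with the paper.
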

Using Corollary~\ref{cor:cor:LGFPI:modified:conv}, we can prove the following statement which will be useful in the convergence proofs of Section~\ref{ssec:pre:FPI:k:Beck:supp}.
\begin{theorem} \label{thm:LGFPI:conv}
Let $\seq[k\in\N_0]{B_k}$ and $\seq[k\in\N_0]{\bk}$ be sequences in $\spLin(\spVar, \spVar)$ and $\spVar$ with limits $B$ and $\b$, respectively. If $\rho=\rho (B) < 1$, the sequence $\seq[k\in\N_0]{\xk}$, with $\xz\in \spVar$, generated by
\begin{equation} \label{itr:LGFPI}
    \xkp \coloneqq B_k \xk + \bk \,,
\end{equation}
converges to $\x\coloneqq(\opid - B)^{-1} \b$. The convergence is linear when $\seq[k\in\N_0]{B_k}$ and $\seq[k\in\N_0]{\bk}$ converge linearly with rates $q_B$ and $q_{\b}$, respectively. In fact, for all $\delta \in (0, 1 - \rho)$, there exist $C_1(\delta)$, $C_2(\delta)$ and $K\in\N$, such that for all $k\geq K$, we have
\begin{equation} \label{eq:LGFPI:conv_rates}
    \norm{\xk - \x} \leq C_1 (\delta) (k-K) q^{k-K} + C_2 (\delta) (\rho + \delta)^{k-K} \,,
\end{equation}
where $q\coloneqq \max (\rho + \delta, q_B, q_{\b})$.
\end{theorem}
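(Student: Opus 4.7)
The plan is to reduce Theorem~\ref{thm:LGFPI:conv} to Corollary~\ref{cor:LGFPI:modified:conv} by studying the error $\ek \coloneqq \xk - \x$ instead of the iterates themselves. First I would note that $\x = (\opid - B)^{-1}\b$ is well-defined by Lemma~\ref{lem:spec_rad:inv_op}, since $\rho(B) < 1$. Then $\x$ satisfies the identity $B\x + \b = \x$, which is the fact that drives the whole computation.

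Subtracting this identity from \eqref{itr:LGFPI} gives
\begin{equation*}
    \ekp = B_k\xk + \bk - B\x - \b = B_k\ek + (B_k - B)\x + (\bk - \b) \,.
\end{equation*}
So $\ek$ obeys a recursion of the exact shape \eqref{itr:LGFPI:modified} in Theorem~\ref{thm:LGFPI:modified:conv} with the identifications $C_k \coloneqq 0$ and $\dk \coloneqq (B_k - B)\x + (\bk - \b)$. By hypothesis, $B_k \to B$ and $\bk \to \b$, so $\dk \to 0$; trivially $C_k \to 0$. Since $\rho(B) < 1$ and $B_k \to B$, Corollary~\ref{cor:LGFPI:modified:conv} applies and yields $\ek \to 0$, i.e.\ $\xk \to \x$.

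For the linear rate, assume $B_k \to B$ linearly at rate $q_B$ and $\bk \to \b$ linearly at rate $q_{\b}$. Then
\begin{equation*}
    \norm{\dk} \leq \norm{B_k - B}\,\norm{\x} + \norm{\bk - \b} = \O\bigl(\max(q_B, q_{\b})^k\bigr) \,,
\end{equation*}
so $\dk$ converges linearly to $0$ with rate at most $q_{\d} \coloneqq \max(q_B, q_{\b})$. Applying the linear-rate half of Corollary~\ref{cor:LGFPI:modified:conv} (with $q_C$ absent because $C_k \equiv 0$) gives, for every $\delta \in (0, 1 - \rho)$ and all $k$ sufficiently large, the bound
\begin{equation*}
    \norm{\ek} \leq C_1(\delta)(k-K) q^{k-K} + C_2(\delta)(\rho + \delta)^{k-K} \,,
\end{equation*}
with $q = \max(\rho + \delta, q_B, q_{\b})$, which is exactly \eqref{eq:LGFPI:conv_rates}.

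\paragraph{Where the work is.} There is essentially no conceptual obstacle: the only step that might be worth double-checking is the derivation of $\dk$, and in particular that $B\x + \b = \x$ is used at precisely one place so that no stray terms remain. The linear rate is then a bookkeeping consequence of Theorem~\ref{thm:LGFPI:modified:conv} once we recognise that the constant sequence $C_k = 0$ trivially satisfies $C_k \to 0$ linearly at any rate, so it does not enter the maximum defining $q$.
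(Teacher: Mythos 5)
Your proof is correct and follows essentially the same route as the paper: use Lemma~\ref{lem:spec_rad:inv_op} to define $\x = (\opid - B)^{-1}\b$, subtract the fixed-point identity $\x = B\x + \b$ from \eqref{itr:LGFPI} to get a recursion in $\ek = \xk - \x$ of the form \eqref{itr:LGFPI:modified}, and invoke the modified-recursion machinery.

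The only difference is cosmetic: the paper brackets the recursion as $\ekp = B\ek + C_k\ek + \dk$ with $C_k \coloneqq B_k - B$ and invokes Corollary~\ref{cor:cor:LGFPI:modified:conv} (constant leading matrix), whereas you bracket it as $\ekp = B_k\ek + 0\cdot\ek + \dk$ with $C_k \equiv 0$ and invoke Corollary~\ref{cor:LGFPI:modified:conv} (convergent leading matrix). Both bracketings describe the same iteration and give the same rate $q = \max(\rho + \delta, q_B, q_{\b})$; your remark that $C_k \equiv 0$ ``does not enter the maximum'' is exactly right, since the zero sequence converges linearly at any rate in $(0,1)$ and so can be absorbed.
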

\begin{proof}
The expression $\x = (I-B)^{-1}\b$ is well-defined thanks to Lemma~\ref{lem:spec_rad:inv_op} and solves the linear equation $\x = B \x + \b$ for $\x$. By setting $\ek\coloneqq\xk - \x$, $C_k\coloneqq B_k - B$, and $\dk\coloneqq (B_k - B)\x + \bk - \b$, we obtain,
\begin{equation*}
    \begin{aligned}
        \ekp &= (B_k \xk + \bk) - (B\x + \b) \\
        &= B \ek + C_k \ek + \dk \,.
    \end{aligned}
\end{equation*}
Since the above recursion matches \eqref{itr:LGFPI:modified} and the setting of Corollary~\ref{cor:cor:LGFPI:modified:conv} applies, the result follows.
\end{proof}

\subsection{Differentiation of Iteration-Dependent Algorithms: Classical Results} \label{ssec:pre:FPI:k:Beck:supp}

In this section, we briefly recap the results of \cite{Bec94} in a way that aligns with those of Section~\ref{sec:FPI:k}, allowing for a clearer comparison between the two sets of results. We first lay down the assumptions on the sequence of mappings $\fixMap[k]$ in \eqref{itr:FPI:k} and the mapping $\fixMap$ in \eqref{eq:FPE} which will ensure the convergence of the sequence $\deriv \xk (\u)$ generated by \eqref{itr:D:FPI:k}. The main requirement in the work of \cite{Bec94} is that of the pointwise convergence of the sequence $\deriv \fixMap[k]$ to $\deriv \fixMap$.

\subsubsection{Problem Setting}

Given $\givenPrm\in\spVar$, we assume that $\xmin$ solves \eqref{eq:FPE} for $\x$ with $\u=\givenPrm$ and is the desired limit of $\xk (\givenPrm)$ generated by \eqref{itr:FPI:k}. Furthermore, we assume that for all $k$, $\fixMap[k]$ and $\fixMap$ are $C^1$-smooth near $(\xmin, \givenPrm)$ and the contraction property holds for $\derivx\fixMap$ at $(\xmin, \givenPrm)$. In particular, given $(\xz, \xmin, \givenPrm) \in \spVar\times\spVar\times\spPrm$, $\neighbourhoodFM \in \neighbourhood[(\xmin, \givenPrm)]$, and $\seq[k\in\N_0]{\fixMap[k]}$, we assume that the following assumption holds.
\begin{assumption} \label{ass:basic}
    \begin{enumerate}[label=(\roman*)]
        \item \label{itm:basic:C1} $\rstDom{\fixMap[k]}{\neighbourhoodFM}$ and $\rstDom{\fixMap}{\neighbourhoodFM}$ are $C^1$-smooth for all $k$,
        \item \label{itm:basic:fixed:point} $\xmin = \fixMap (\xmin, \givenPrm)$,
        \item \label{itm:basic:spectral:radius} $\rho (\derivx \fixMap (\xmin, \givenPrm)) < 1$, and
        \item \label{itm:basic:iterates} $\xk (\givenPrm)$ generated by \eqref{itr:FPI:k} has limit $\xmin$ such that $(\xk (\givenPrm), \givenPrm) \in \neighbourhoodFM$ for all $k\in\N$.
    \end{enumerate}
\end{assumption}

\subsubsection{Implicit Differentiation} \label{sssec:Beck:ID:supp}
From Assumption~\ref{ass:basic}, Lemma~\ref{lem:spec_rad:inv_op}, and \cite[Theorem~1B.1]{DR09}, we obtain the Implicit Function Theorem for \eqref{eq:FPE}.
\begin{theorem}[Implicit Function Theorem] \label{thm:IFT:FPE}
    For some $(\xmin, \givenPrm)$, let $\fixMap$ be $C^1$-smooth near $(\xmin, \givenPrm)$, $\xmin = \fixMap (\xmin, \givenPrm)$ and $\rho (\derivx \fixMap (\xmin, \givenPrm)) < 1$. Then $\exists \ \neighbourhoodPRM\in\neighbourhood[\givenPrm]$ and a $C^1$-smooth mapping $\map{\argmap}{\neighbourhoodPRM}{\spVar}$ such that $\forall \u\in \neighbourhoodPRM$, $\argmap(\u) = \fixMap(\argmap(\u), \u)$, and
    \begin{equation} \label{eq:FPE:IFT}
        \deriv\argmap(\u) = \big( \opid - \derivx \fixMap (\argmap(\u), \u) \big)^{-1} \derivu \fixMap (\argmap(\u), \u) \,.
    \end{equation}
\end{theorem}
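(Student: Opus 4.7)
The plan is to reduce Theorem~\ref{thm:IFT:FPE} to the classical Implicit Function Theorem (\cite[Theorem~1B.1]{DR09}) applied to an auxiliary map that encodes the fixed-point equation. Concretely, I would define
\begin{equation*}
    H(\x, \u) \coloneqq \x - \fixMap(\x, \u)
\end{equation*}
on the neighbourhood where $\fixMap$ is $C^1$-smooth, and verify the three hypotheses of the classical IFT at the base point $(\xmin, \givenPrm)$: (i) $H$ is $C^1$-smooth near $(\xmin, \givenPrm)$, which is immediate from the regularity of $\fixMap$; (ii) $H(\xmin, \givenPrm) = 0$, which is just the fixed-point hypothesis $\xmin = \fixMap(\xmin, \givenPrm)$; and (iii) $\derivx H(\xmin, \givenPrm) = \opid - \derivx \fixMap(\xmin, \givenPrm)$ is invertible, which is exactly the content of Lemma~\ref{lem:spec_rad:inv_op} applied to $B \coloneqq \derivx \fixMap(\xmin, \givenPrm)$, using the spectral radius condition $\rho(B) < 1$.

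Once those hypotheses are in place, the classical IFT delivers a neighbourhood $\neighbourhoodPRM\in\neighbourhood[\givenPrm]$ and a $C^1$-smooth mapping $\map{\argmap}{\neighbourhoodPRM}{\spVar}$ such that $H(\argmap(\u), \u) = 0$ for every $\u\in\neighbourhoodPRM$, which is precisely $\argmap(\u) = \fixMap(\argmap(\u), \u)$. The derivative formula then follows by differentiating this identity in $\u$ via the chain rule:
\begin{equation*}
    \deriv\argmap(\u) = \derivx\fixMap(\argmap(\u), \u)\,\deriv\argmap(\u) + \derivu\fixMap(\argmap(\u), \u),
\end{equation*}
and rearranging, provided $\opid - \derivx\fixMap(\argmap(\u), \u)$ is invertible on $\neighbourhoodPRM$.

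The only point that needs a brief justification is the invertibility of $\opid - \derivx\fixMap(\argmap(\u), \u)$ on the whole neighbourhood $\neighbourhoodPRM$, not only at $\u=\givenPrm$. I would handle this by continuity: the set of invertible linear operators on $\spVar$ is open in $\spLin(\spVar,\spVar)$, and the map $\u\mapsto \opid - \derivx\fixMap(\argmap(\u), \u)$ is continuous by the $C^1$-smoothness of $\fixMap$ and continuity of $\argmap$. Hence, possibly after shrinking $\neighbourhoodPRM$, invertibility persists, giving \eqref{eq:FPE:IFT}.

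I do not expect any genuine obstacle here: the result is essentially a direct rephrasing of the classical IFT, with Lemma~\ref{lem:spec_rad:inv_op} serving as the bridge from the spectral radius assumption to the invertibility of the partial Jacobian $\derivx H(\xmin, \givenPrm)$. The only minor care points are (a) recording that the neighbourhood may need to be shrunk for the derivative formula to be globally valid on $\neighbourhoodPRM$, and (b) noting that no additional regularity beyond $C^1$ is needed because the IFT already yields $C^1$-smoothness of $\argmap$ without further hypotheses.
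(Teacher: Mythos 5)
Your proof is correct and takes essentially the same route as the paper: the paper simply cites the classical Implicit Function Theorem of Dontchev--Rockafellar together with Lemma~\ref{lem:spec_rad:inv_op}, and your argument is precisely an elaboration of that reduction via the auxiliary map $H(\x,\u) = \x - \fixMap(\x,\u)$. Your additional care point about shrinking $\neighbourhoodPRM$ so that $\opid - \derivx\fixMap(\argmap(\u),\u)$ stays invertible is a reasonable remark, though it is in fact already built into the conclusion of the cited IFT.
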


\subsubsection{Automatic Differentiaion} \label{sssec:Beck:AD:supp}
As stated in Section~\ref{ssec:pre:Mat:Ana}, the update procedure to generate the derivative iterates in \eqref{itr:D:FPI:k} takes after the iterative process defined by \eqref{itr:LGFPI} in Theorem~\ref{thm:LGFPI:conv}. That is, for some $\givenPrm\in\spPrm$ and $\ud\in\spPrm$, if we set $B_k \coloneqq\derivx\fixMap[k](\xk (\givenPrm), \givenPrm)$ and $\bk \coloneqq\derivu\fixMap[k](\xk (\givenPrm), \givenPrm) \ud$, the resulting sequence is $\yk = \deriv\xk(\givenPrm)\ud$. Similarly, the limit $\y^*\coloneqq(\opid - B)^{-1} \b$ of the sequence $\yk$ generated by \eqref{itr:LGFPI} matches $\deriv\argmap (\givenPrm)\ud$ from \eqref{eq:FPE:IFT}, if we set $B \coloneqq\derivx\fixMap(\argmap (\givenPrm), \givenPrm)$ and $\b\coloneqq\derivu\fixMap(\argmap (\givenPrm), \givenPrm) \ud$. One way to prove the convergence of $\deriv\xk (\givenPrm)\ud$ to $\deriv\argmap(\givenPrm)\ud$ is by asserting that $\deriv\fixMap[k] (\xk (\givenPrm), \givenPrm)$ converges to $\deriv\fixMap (\argmap (\givenPrm), \givenPrm)$.
\begin{assumption} \label{ass:convergence}
    The sequence $\seq[k\in\N_0]{\deriv\fixMap[k] (\xk (\givenPrm), \givenPrm)}$ converges to $ \deriv\fixMap (\xmin, \givenPrm)$.
\end{assumption}
\begin{remark} \label{rem:convergence}
    When the sequence of functions $\seq[k\in\N]{\deriv\fixMap[k]}$ is equicontinuous at $(\xmin, \givenPrm)$ and has a pointwise limit $\deriv\fixMap$ near $(\xmin, \givenPrm)$, then Assumption~\ref{ass:convergence} naturally holds. 
\end{remark}
The main result of \cite{Bec94} can then be stated below.
\begin{theorem} \label{thm:D:FPI:conv}
    Let $(\xz, \xmin, \givenPrm)$ be such that Assumptions~\ref{ass:basic} and \ref{ass:convergence} are satisfied by $\fixMap[k]$ and $\fixMap$. Then the sequence $\seq[k\in\N_0]{\deriv\xk (\givenPrm)}$ generated by \eqref{itr:D:FPI:k} converges to $\deriv\argmap (\givenPrm)$.
\end{theorem}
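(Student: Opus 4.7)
The approach is a direct application of Theorem~\ref{thm:LGFPI:conv}. The key observation is that evaluating the derivative recursion \eqref{itr:D:FPI:k} at $\u = \givenPrm$ produces a nonautonomous affine iteration of exactly the form \eqref{itr:LGFPI} studied in Theorem~\ref{thm:LGFPI:conv}, and Assumptions~\ref{ass:basic} and \ref{ass:convergence} match its hypotheses essentially on the nose. Since Assumption~\ref{ass:basic} satisfies the premises of the Implicit Function Theorem, I would first invoke Theorem~\ref{thm:IFT:FPE} to obtain a $C^1$-smooth local solution map $\argmap$ near $\givenPrm$ with $\argmap(\givenPrm) = \xmin$ and derivative given by \eqref{eq:FPE:IFT}; this supplies the target limit.

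\textbf{The main step.} I would fix an arbitrary direction $\ud \in \spPrm$ and set $\yk := \deriv\xk(\givenPrm)\,\ud$, $B_k := \derivx\fixMap[k](\xk(\givenPrm), \givenPrm)$, and $\bk := \derivu\fixMap[k](\xk(\givenPrm), \givenPrm)\,\ud$, noting that these quantities are well-defined because Assumptions~\ref{ass:basic}\ref{itm:basic:C1} and \ref{ass:basic}\ref{itm:basic:iterates} place $(\xk(\givenPrm), \givenPrm)$ in a neighbourhood where $\fixMap[k]$ is $C^1$-smooth. Testing \eqref{itr:D:FPI:k} against $\ud$ yields $\y^{(k+1)} = B_k \yk + \bk$, which coincides with \eqref{itr:LGFPI}. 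Next I would peel Assumption~\ref{ass:convergence} into $B_k \to B := \derivx\fixMap(\xmin, \givenPrm)$ and $\bk \to \b := \derivu\fixMap(\xmin, \givenPrm)\,\ud$, using continuity of the block projection of the total derivative onto its $\x$- and $\u$-components and of the linear map $T \mapsto T\ud$. Combined with $\rho(B) < 1$ from Assumption~\ref{ass:basic}\ref{itm:basic:spectral:radius}, Theorem~\ref{thm:LGFPI:conv} delivers $\yk \to (\opid - B)^{-1}\b$, which by \eqref{eq:FPE:IFT} equals $\deriv\argmap(\givenPrm)\,\ud$. Since $\ud$ is arbitrary and $\spPrm$ is finite-dimensional, pointwise convergence is equivalent to operator-norm convergence, and I would conclude $\deriv\xk(\givenPrm) \to \deriv\argmap(\givenPrm)$.

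\textbf{Main obstacle.} There is no deep obstacle here --- Theorem~\ref{thm:LGFPI:conv} does the real work, and the argument reduces to careful identification of objects. The care points are (i) ensuring that the limit $(\opid - B)^{-1}\b$ produced by the affine-iteration lemma is indeed $\deriv\argmap(\givenPrm)\,\ud$, which uses the fixed-point identity $\xmin = \fixMap(\xmin, \givenPrm)$ from Assumption~\ref{ass:basic}\ref{itm:basic:fixed:point} together with the IFT to guarantee $\argmap(\givenPrm) = \xmin$ so that the evaluation points in \eqref{eq:FPE:IFT} coincide; and (ii) splitting the joint-derivative convergence of Assumption~\ref{ass:convergence} into its $\x$- and $\u$-partial components. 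A coordinate-free alternative would be to run Theorem~\ref{thm:LGFPI:conv} directly on the operator-valued iteration in $\spLin(\spPrm, \spVar)$, since left-multiplication by $B_k$ preserves the spectral-radius bound, but the directional argument above is marginally cleaner and avoids reinterpreting $\rho$ on an operator-valued recursion.
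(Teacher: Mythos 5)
Your proposal is correct and follows essentially the same route as the paper: fix a direction $\ud$, recast \eqref{itr:D:FPI:k} as the affine iteration \eqref{itr:LGFPI} with $B_k = \derivx\fixMap[k](\xk(\givenPrm), \givenPrm)$ and $\bk = \derivu\fixMap[k](\xk(\givenPrm), \givenPrm)\ud$, read off $B_k\to B$, $\bk\to\b$, $\rho(B)<1$ from Assumptions~\ref{ass:convergence} and \ref{ass:basic}\ref{itm:basic:spectral:radius}, and invoke Theorem~\ref{thm:LGFPI:conv} to identify the limit with $\deriv\argmap(\givenPrm)\ud$ via \eqref{eq:FPE:IFT}. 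This is exactly the identification spelled out in Section~\ref{sssec:Beck:AD:supp}, which the paper's one-line proof points to.
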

\begin{proof}
    The proof is a direct consequence of Assumptions~\ref{ass:basic} and \ref{ass:convergence}, Theorem~\ref{thm:LGFPI:conv} and the arguments following Theorem~\ref{thm:LGFPI:conv}.
\end{proof}
For linear convergence we require a stronger assumption on $D\fixMap[k]$, that is its linear convergence as given below.
\begin{assumption} \label{ass:linear:convergence}
    The sequence $\seq[k\in\N_0]{\xk (\givenPrm)}$ converges linearly to $\xmin$ and
    \begin{equation} \label{ineq:Lipschitz:Like}
        \deriv\fixMap[k] (\xk (\givenPrm), \givenPrm) - \deriv\fixMap (\xmin, \givenPrm) = \O (\xk (\givenPrm) - \xmin) \,.
    \end{equation}
\end{assumption}

\begin{theorem} \label{thm:D:FPI:conv:linear}
    Let $(\xz, \xmin, \givenPrm)$ be such that Assumptions~\ref{ass:basic} and \ref{ass:linear:convergence} are satisfied by $\fixMap[k]$ and $\fixMap$. Then the sequence $\seq[k\in\N_0]{\deriv\xk (\givenPrm)}$ generated by \eqref{itr:D:FPI:k} converges linearly to $\deriv\argmap (\givenPrm)$. In particular, when the sequence $\seq[k\in\N_0]{\xk}$ converges with rate $q_{\x} < 1$, then for all $\delta \in (0, 1 - \rho)$, there exist $C_1(\delta)$, $C_2(\delta)$ and $K\in\N$, such that for all $k\geq K$, we have
    \begin{equation} \label{eq:D:FPI:conv_rates}
        \norm{\deriv \xk (\givenPrm) - \deriv \argmap (\givenPrm)} \leq C_1 (\delta) (k-K) q^{k-K} + C_2 (\delta) (\rho + \delta)^{k-K} \,,
    \end{equation}
    where $\rho \coloneqq \rho (\derivx \fixMap (\xmin, \givenPrm))$ and $q \coloneqq \max(\rho + \delta, q_{\x})$.
\end{theorem}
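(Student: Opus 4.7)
The plan is to recognize that the derivative recursion \eqref{itr:D:FPI:k} at $\u = \givenPrm$ is exactly of the form \eqref{itr:LGFPI}, namely $\deriv \xkp (\givenPrm) = A_k \deriv \xk (\givenPrm) + M_k$ where $A_k \coloneqq \derivx \fixMap[k](\xk(\givenPrm), \givenPrm)$ and $M_k \coloneqq \derivu \fixMap[k](\xk(\givenPrm), \givenPrm)$, and then apply Theorem~\ref{thm:LGFPI:conv} column-by-column. With the limits $A \coloneqq \derivx \fixMap(\xmin, \givenPrm)$ and $M \coloneqq \derivu \fixMap(\xmin, \givenPrm)$, Theorem~\ref{thm:IFT:FPE} supplies the fixed-point identity $\deriv \argmap(\givenPrm) = (\opid - A)^{-1} M$, which is precisely the limit produced by Theorem~\ref{thm:LGFPI:conv} once the hypotheses $A_k \to A$, $M_k \to M$, and $\rho(A) < 1$ are checked.

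Assumption~\ref{ass:linear:convergence} delivers the linear-rate control on the driving sequences: the estimate $\deriv \fixMap[k](\xk(\givenPrm), \givenPrm) - \deriv \fixMap(\xmin, \givenPrm) = \O(\xk(\givenPrm) - \xmin)$ controls both Jacobian blocks at once, so $A_k - A$ and $M_k - M$ are each $\O(\xk - \xmin)$. Combined with the linear convergence of $\xk$ at rate $q_\x$, both $A_k$ and $M_k$ therefore converge linearly with rate $q_\x$. Applying Theorem~\ref{thm:LGFPI:conv} column-wise (that is, to $\deriv \xk (\givenPrm) \ud \in \spVar$ for each $\ud \in \spPrm$, with $B_k \coloneqq A_k$ and $\bk \coloneqq M_k \ud$) yields the estimate \eqref{eq:LGFPI:conv_rates} with $q = \max(\rho + \delta, q_\x)$, which is exactly the target bound \eqref{eq:D:FPI:conv_rates}. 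Finite-dimensional equivalence of norms on $\spLin(\spPrm, \spVar)$ absorbs the column-wise assembly into the constants $C_1(\delta)$ and $C_2(\delta)$.

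The main (and essentially only) subtlety is the passage from the spectral-radius bound $\rho(A) < 1$ supplied by Assumption~\ref{ass:basic}\ref{itm:basic:spectral:radius} to a submultiplicative operator-norm inequality, since Theorem~\ref{thm:LGFPI:conv} is stated relative to a norm induced by a vector norm. As in the proof of Corollary~\ref{cor:LGFPI:modified:conv}, for every $\delta \in (0, 1 - \rho(A))$ one selects an equivalent norm $\norm[\delta]{\cdot}$ with $\norm[\delta]{A} \leq \rho(A) + \delta$; this is precisely the source of the $\rho + \delta$ slack visible in \eqref{eq:D:FPI:conv_rates}, and the change of norm is absorbed into the prefactors. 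Beyond this bookkeeping, the argument is routine substitution into the template established in Section~\ref{ssec:pre:Mat:Ana}.
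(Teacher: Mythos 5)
Your argument matches the paper's: both reduce \eqref{itr:D:FPI:k} at $\u=\givenPrm$ to the linear recursion \eqref{itr:LGFPI} with $B_k=\derivx\fixMap[k](\xk,\givenPrm)$, $\bk=\derivu\fixMap[k](\xk,\givenPrm)\ud$, identify the limit $(\opid-B)^{-1}\b$ with $\deriv\argmap(\givenPrm)\ud$ via Theorem~\ref{thm:IFT:FPE}, note that Assumption~\ref{ass:linear:convergence} forces both input sequences to converge linearly at rate $q_\x$, and then invoke Theorem~\ref{thm:LGFPI:conv}. One small inaccuracy: Theorem~\ref{thm:LGFPI:conv} is stated under $\rho(B)<1$, not an induced-norm bound, so the norm-selection step you flag as the ``essentially only subtlety'' is already packaged inside that theorem (via Corollaries~\ref{cor:LGFPI:modified:conv} and \ref{cor:cor:LGFPI:modified:conv}) and need not be redone; this does not affect correctness.
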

\begin{proof}
    Assumption~\ref{ass:linear:convergence} asserts that the sequences $\derivx \fixMap[k] (\xk (\givenPrm), \givenPrm)$ and $\derivu \fixMap[k] (\xk (\givenPrm), \givenPrm) \ud$ respectively converge to $\derivx \fixMap (\xmin, \givenPrm)$ and $\derivu \fixMap (\xmin, \givenPrm) \ud$ for any $\ud\in\spPrm$. Furthermore, the rate of convergence of the two sequences is linear and is the same as that of $\xk$, that is, $q_{\x}$. Therefore, the proof follows by simply invoking the second part of Theorem~\ref{thm:LGFPI:conv}.
\end{proof}
\begin{remark}
    Assumption~\ref{ass:linear:convergence} is not practical and therefore we rarely see any application of the above result in practice. For instance, for gradient descent with line search, it requires linear convergence of the step size sequence which does not hold in general.
\end{remark}

\section{Proofs of Section~\ref{sec:FPI:k}}

\appSubSect{Lemma}{lem:IFT:FPE:k}
\vspace{2ex}
\begin{proof}
    From Assumption~\ref{ass:basic:modified}\ref{itm:basic:modified:spectral:norm}, given $\eps \in (0, 1-\rho)$, there exists $K\in\N$ such that $\rho (\derivx \fixMap[k] (\xmin, \givenPrm)) \leq \norm{\derivx \fixMap[k] (\xmin, \givenPrm)} < \rho + \eps$ for all $k\geq K$, where $\rho \coloneqq \limsup_{k\to\infty} \norm{\derivx \fixMap[k] (\xmin, \givenPrm)}$. The existence of the neighbourhood $\neighbourhoodPRM$ and the $C^1$-smooth mapping $\map{\argmap}{\neighbourhoodPRM}{\spVar}$ is guaranteed by Theorem~\ref{thm:IFT:FPE} applied to $\fixMap[n]$ for some $n\geq K$, thanks to Assumption~\ref{ass:basic:modified} (\ref{itm:basic:modified:C1}--\ref{itm:basic:modified:fixed:point}). In particular, for all $\u\in \neighbourhoodPRM$, $\argmap (\u) = \fixMap[n] (\argmap (\u), \u)$ and $D\argmap (\u)$ is given by \eqref{eq:FPE:IFT} with $\fixMap$ replaced by $\fixMap[n]$. From Assumption~\ref{ass:basic:modified}\ref{itm:basic:modified:fixed:point}, for any $k\geq K$ and $\u\in \neighbourhoodPRM$, we have $\argmap (\u) = \fixMap[n] (\argmap (\u), \u) = \fixMap[k] (\argmap (\u), \u)$ and with $\neighbourhoodPRM$ possibly reduced, $\rho (\derivx \fixMap[k] (\argmap (\u), \u)) < 1$. Therefore, by using the Chain rule and Lemma~\ref{lem:spec_rad:inv_op}, we obtain the expression in \eqref{eq:FPE:IFT:k}.
\end{proof}

\appSubSect{Theorem}{thm:D:FPI:modified:conv}
\vspace{2ex}
\begin{proof}
    From Lemma~\ref{lem:IFT:FPE:k} and Remark~\ref{rem:IFT:FPE:k}, there exists $K\in\N$ such that for any $k\geq K$, the fixed-point mapping $\argmap_k$ of $\fixMap[k] (\cdot, \u)$ is $C^1$-smooth near $\givenPrm$ and $\deriv \argmap_k (\givenPrm) = X_*$ (see Assumption~\ref{ass:basic:modified}\ref{itm:basic:modified:fixed:point}). We denote $\xk\coloneqq \xk (\givenPrm)$ for simplicity and define
    \begin{equation} \label{eq:sequences:D:FPI:k:conv}
        \begin{aligned}
            \ek &\coloneqq \left( \deriv \xk (\givenPrm) - X_* \right) \ud \\
            B_k &\coloneqq \derivx \fixMap[k] (\xmin, \givenPrm) \\
            C_k &\coloneqq \derivx \fixMap[k] (\xk, \givenPrm) - \derivx \fixMap[k] (\xmin, \givenPrm) \\
            \dk &\coloneqq \left(\deriv \fixMap[k] (\xk, \givenPrm) - \deriv \fixMap[k] (\xmin, \givenPrm)\right) \begin{bmatrix}
                X_*\ud \\ \ud
            \end{bmatrix} \,,
        \end{aligned}
    \end{equation}
    to obtain
    \begin{equation} \label{eq:ekp:D:FPI:k:conv}
        \begin{aligned}
            \ekp &= \Big( \derivx \fixMap[k] (\xk, \givenPrm) D \xk (\givenPrm) \ud + \derivu \fixMap[k] (\xk, \givenPrm) \ud \Big) -\\
            &\Big( \derivx \fixMap[k] (\xmin, \givenPrm) X_*\ud + \derivu \fixMap[k] (\xmin, \givenPrm) \ud \Big) \\
            &= B_k \ek + C_k \zk + \dk \,.
        \end{aligned}
    \end{equation}
    From Assumption~\ref{ass:convergence:modified} and the definitions of $C_k$ and $\yk$, we note that $C_k \to 0$ and $\dk \to 0$. Therefore, from Theorem~\ref{thm:LGFPI:modified:conv}, $\seq[k\in\N_0]{\deriv \xk (\givenPrm) \ud}$ converges to $X_* \ud$ for any $\ud\in\spPrm$.
\end{proof}

\appSubSect{Theorem}{thm:D:FPI:modified:conv:linear}
\vspace{2ex}
\begin{proof}
    From Assumption~\ref{ass:linear:convergence:modified}, we have $\deriv \fixMap[k] (\xk (\givenPrm), \givenPrm) - \deriv \fixMap[k] (\xmin, \givenPrm) \to 0$ and the rate of convergence of the two sequences is $q_{\x}$. For any $\ud\in\spPrm$, by using the definitions in \eqref{eq:sequences:D:FPI:k:conv}, we note that both $C_k$ and $\yk$ converge linearly with rate $q_{\x}$. Thus, invoking the second part of Theorem~\ref{thm:LGFPI:modified:conv}, we obtain the asymptotic and the non-asymptotic convergence rate of $\seq[k\in\N_0]{\deriv \xk (\givenPrm) \ud}$.
\end{proof}

\section{Proofs of Section~\ref{sec:FBS}}
\appSubSect{Theorem}{thm:PGD:Derv}
The proof of Theorem~\ref{thm:PGD:Derv} relies on the following preliminary result from set-valued analysis.
\begin{lemma} \label{lem:relint:Convex:Set:Sequence}
    Given a sequence of non-empty, closed, convex sets $C_k \subset \spVar$ which converges to $C \subset \spVar$. Let $\xk \in C_k$ be a sequence with limit $\x\in\ri C$. Then there exists $K\in\N$ such that $\xk \in \ri C_k$ for all $k\geq K$.
\end{lemma}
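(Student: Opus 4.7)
The plan is to transfer a simplex-based witness of relative interiority from $C$ to each $C_k$. By the standard characterization of the relative interior, since $\x \in \ri C$ there exist $d+1$ affinely independent points $\y_0, \ldots, \y_d \in C$, with $d \coloneqq \dim \aff C$, such that $\x$ lies in the relative interior of the simplex $S \coloneqq \conv\{\y_0, \ldots, \y_d\}$; equivalently, $\x = \sum_{i=0}^{d} \lambda_i \y_i$ with $\lambda_i > 0$ and $\sum_i \lambda_i = 1$.

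Next, I would use the convergence $C_k \to C$ (in particular the inner-limit inclusion $C \subset \liminf_{k} C_k$) to pick, for each $i$, sequences $\y_i^{(k)} \in C_k$ with $\y_i^{(k)} \to \y_i$. Since affine independence is an open condition (it is characterised by non-vanishing of a Gram/volume determinant in the differences $\y_i - \y_0$), for all $k$ large enough the points $\y_0^{(k)}, \ldots, \y_d^{(k)}$ remain affinely independent, and the simplex $S_k \coloneqq \conv\{\y_0^{(k)}, \ldots, \y_d^{(k)}\}$ is contained in $C_k$ by convexity.

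I would then transfer the interior-membership from $S$ to $S_k$. The affine hull $\aff S_k$ is a continuous function of the vertex set (in the sense that the orthogonal projector onto $\aff S_k$ depends continuously on the $\y_i^{(k)}$); the same is true of the barycentric coordinate map of $S_k$ when its domain is restricted to $\aff S_k$. Since $\xk \to \x$ and $\x \in \ri S$, the projection of $\xk$ onto $\aff S_k$ converges to $\x$ and therefore, for $k$ large, lies in $\ri S_k$. A short argument using the positivity and continuity of the barycentric coordinates $\lambda_i^{(k)}$ of (the projection of) $\xk$ then forces $\xk \in \aff S_k$ and $\xk \in \ri S_k$ for $k$ large enough, since the distance of $\xk$ to $\aff S_k$ and the negative parts of the coordinates $\lambda_i^{(k)} - \lambda_i$ both vanish in the limit.

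The main obstacle is the final step: deducing $\xk \in \ri C_k$ from $\xk \in \ri S_k$. Because $S_k \subset C_k$, this implication holds precisely when $\aff S_k = \aff C_k$, i.e.\ when $\dim \aff C_k = d = \dim \aff C$. This dimensional match is the delicate part of the statement and is not automatic from Painlev\'{e}--Kuratowski convergence alone; however, in the intended application to Theorem~\ref{thm:PGD:Derv}, the sets $C_k$ arise as subdifferentials $\partial_{\x} g(\xk, \givenPrm)$ of a partly smooth function at iterates $\xk \in \manif$, so $\aff C_k$ is an affine translate of $\spNor{\xk}$ and its dimension is constant along $\manif$ near $\xmin$. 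Under this structural rigidity (which one can either read off from the partly smooth setting or impose as an additional hypothesis on the $C_k$), the inclusion $\ri S_k \subset \ri C_k$ is immediate and the proof is complete.
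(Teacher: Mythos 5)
You take a genuinely different route. The paper picks a compact set $V$ around $\x$ with $\sint V\neq\emptyset$ and $V\cap\aff C\subset C$, asserts $\aff C_k\to\aff C$ and invokes a lemma of Mosco to get $V\cap\aff C_k\to V\cap\aff C$, then establishes $V\cap\aff C_k\subset C_k$ eventually by a projection/normal-cone contradiction; since $\xk\in\sint V\cap\aff C_k$, this gives $\xk\in\ri C_k$. Your argument instead encodes $\x\in\ri C$ by a simplex $S\subset C$, lifts the vertices into $C_k$ through the inner-limit inclusion, and tracks barycentric coordinates. The paper's route is shorter once $\aff C_k\to\aff C$ is available; your route makes the role of affine independence and dimension explicit, which is precisely what exposes the gap.

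The gap you flag is real, and it is present in the paper's proof as well, since the step ``$\aff C_k\to\aff C$'' is simply false without a dimensional rigidity hypothesis, and the lemma itself fails without it. Take $C_k=[0,1/k]\times\{0\}\subset\R^2$, $C=\{(0,0)\}$, $\xk=\x=(0,0)$: then $C_k\to C$, $\x\in\ri C$, $\xk\in C_k$, yet $\xk\notin\ri C_k=(0,1/k)\times\{0\}$, and $\aff C_k=\R\times\{0\}$ does not converge to $\aff C=\{(0,0)\}$. What is automatic from $C\subset\liminf_k C_k$ and openness of affine independence is only $\dim\aff C_k\geq\dim\aff C$ for $k$ large; the reverse inequality must be supplied. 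You are right that it holds in the intended application: with $C_k=\partial_{\x}F(\xkp,\givenPrm)$ and $\xkp\in\manif$, $\aff C_k$ is a translate of $\spNor{\xkp}$, whose dimension equals the (locally constant) codimension of $\manif$.

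One calibration: you need the dimension hypothesis one step earlier than you place it. Saying that the distance from $\xk$ to $\aff S_k$ ``vanishes in the limit'' does not yield $\xk\in\aff S_k$ for large $k$ --- vanishing is not eventual equality, so the step as written fails. Once $\dim\aff C_k=d$ is assumed, however, $\aff S_k=\aff C_k$ (since $S_k\subset C_k$ and $\dim\aff S_k=d$), so $\xk\in C_k\subset\aff C_k=\aff S_k$ holds automatically, the barycentric argument closes, and $\ri S_k\subset\ri C_k$ follows directly from $\aff S_k=\aff C_k$ together with $S_k\subset C_k$.
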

\begin{proof}
    Because $\xk \to \x$ and $\x \in \ri C$, there exists a compact set $V$ and $K\in\N$ such that $\xk, \x \in \sint V \neq \emptyset$ for all $k\geq K$ and $V \cap \aff C \subset C$. Moreover, the convergence of $C_k$ implies 
    $\aff C_k \to \aff C$ which implies $V\cap\aff C_k \to V\cap\aff C$ \cite[Lemma~1.4]{Mos69}. Once we show that $V\cap\aff C_k \subset C_k$ eventually, we are done. Assume for contradiction, that there exists a subsequence $\seq[i\in\N]{C_{k_i}}$ such that for all $i\in\N$, $\y\iter{i} \in V\cap\aff C_{k_i}$ and $\y\iter{i} \notin C_{k_i}$. The compactness of $V$ and convergence of $V\cap\aff C_{k_i}$ implies the existence of $\y \in V\cap\aff C$ such that $\y\iter{i} \to \y$, possibly through a subsequence. We define the bounded sequence $\b\iter{i}\coloneqq \bm{a}\iter{i} / \norm{\bm{a}\iter{i}}$ where $\bm{a}\iter{i} \coloneqq \y\iter{i} - \proj{C_{k_i}} (\y\iter{i})$ which lies in $\spPar C_{k_i}$ and has limit $0\neq\b\in \spPar C$, again, possibly through a subsequence. For all $i\in\N$ and for all $\w\in C_{k_i}$,
    \begin{equation*}
        \begin{aligned}
            \scal{\b\iter{i}}{\w - \y\iter{i}} &= -\frac{1}{\norm{\bm{a}\iter{i}}} \left( \scal{\bm{a}\iter{i}}{\y\iter{i} - \proj{C_{k_i}} (\y\iter{i})} + \scal{\bm{a}\iter{i}}{\proj{C_{k_i}} (\y\iter{i}) - \w} \right) \\
            &\leq -\frac{1}{2} \norm{\bm{a}\iter{i}} \leq 0 \,,
        \end{aligned}
    \end{equation*}
    where the inequality follows because $\bm{a}\iter{i} \in \ncone{C_{k_i}} (\proj{C_{k_i}} (\y\iter{i}))$. The above inequality leads to $\scal{\b}{\w - \y} \leq 0$ for all $\w \in C$. In other words, we obtain $0\neq\b\in \ncone{C} (\y) \cap \spPar C$ which is a contradiction because $\y\in\ri C$.
\end{proof}
\begin{proof}[Proof of Theorem \ref{thm:PGD:Derv}]
    \begin{enumerate}[label=(\roman*)]
        \item From the convergence of $\xk \coloneqq \xk (\givenPrm)$, Lemma~\ref{lem:APG} ensures that for $\delta>0$ small enough, there exists $K\in\N$, such that $\xk\in\manif \cap B_{\delta} (\xmin)$ for all $k\geq K$. We define the maps $\map{G}{\spVar\times\setPrm\times(0, 2/L)}{\spVar}$ by $G(\x, \u, \alpha) = \x - \alpha \grad[\x] f(\x, \u)$ and $\map{H}{\spVar\times\spVar\times\setPrm\times(0, 2/L)}{\eR}$ by
    \begin{equation*}
        H(\y, \x, \u, \alpha) \coloneqq \alpha g(\y, \u) + \frac{1}{2} \norm{\y - \x + \alpha \grad[\x] f(\x, \u)}^2 \,,
    \end{equation*}
    and note that $\xkp = \pgd[k] (\xk, \givenPrm) = \argmin_{\y} H(\y, \xk, \givenPrm, \alpha_k)$, which is equivalent to $0\in\partial_{\y} H (\xkp, \xk, \givenPrm, \alpha_k)$ or $\bmuk \in \partial_{\x} F(\xkp, \givenPrm)$ from Fermat's rule, where
    \begin{equation} \label{eq:APG:Fermat}
        \bmuk \coloneqq \frac{1}{\alpha_k} \Big( G (\xk, \givenPrm, \alpha_k) - G (\xkp, \givenPrm, \alpha_k)\Big)\,.
    \end{equation}
    Notice that $\bmuk\to0$ because $G(\cdot, \givenPrm, \alpha)$ is non-expansive when $\alpha\in(0, 2/L)$ and we have $\norm{\bmuk} \leq \frac{1}{\sslow} \norm{\xk - \xkp}$. Therefore, from \eqref{eq:ND} and Lemma~\ref{lem:relint:Convex:Set:Sequence}, we have $\bmuk\in\ri \partial_{\x} F(\xkp, \givenPrm)$ or $0\in\ri\partial_{\y} H (\xkp, \xk, \givenPrm, \alpha_k)$, for all $k\in\N$ large enough. Since $H$ is partly smooth relative to $\manif\times\spVar\times\setPrm\times(0, 2/L)$ and $H(\cdot, \x, \u, \alpha)$ is strongly convex for all $(\x, \u, \alpha) \in \spVar\times\setPrm\times(0, 2/L)$, and non-degeneracy condition holds, the $C^1$-smoothness of the update map near $(\xk, \givenPrm, \alpha_k)$ follows by invoking Theorem~\ref{thm:IFT}. The $C^1$-smoothness of $\pgd$ near $(\xmin, \givenPrm, \alpha)$ under given assumptions was shown in \cite[Corollary~32]{MO24} for any $\alpha\in[\sslow, \ssup]$.
    \item We define
    \begin{equation*}
        \begin{gathered}
            Q_k \coloneqq \Hess[\manif] H (\xkp, \xk, \givenPrm, \alpha_k) \,, \quad \tilde{Q}_k \coloneqq \Hess[\manif] H (\xmin, \xmin, \givenPrm, \alpha_k) \\
            P_{\omega, k} \coloneqq \deriv[\omega] \grad[\manif] H (\xkp, \xk, \givenPrm, \alpha_k) \,, \quad \tilde{P}_{\omega, k} \coloneqq \deriv[\omega] \grad[\manif] H (\xmin, \xmin, \givenPrm, \alpha_k) \\
            \projTan_k \coloneqq \projTan (\xk) \,, \quad \projTan_* \coloneqq \projTan (\xmin) \,, \quad \projNor_k \coloneqq \projNor (\xk) \,, \quad \projNor_* \coloneqq \projNor (\xmin) \,,
        \end{gathered}
    \end{equation*}
    where $\omega \in \set{\x, \u, \alpha}$ and evaluate $Q_k$ to obtain
    \begin{equation*}
        \begin{aligned}
            Q_k &= \alpha_k\Hess[\manif] g (\xkp, \givenPrm) + \projTan_{k+1} + \wein[\xkp]{\cdot}{\projNor_{k+1} (\xkp - G(\xk, \givenPrm, \alpha_k)} \\
            &= \alpha_k \Hess[\manif] g (\xmin, \givenPrm) + \alpha_k \wein[\xkp]{\cdot}{\projNor_{k+1} \grad[\x] f (\xkp, \givenPrm)} + \projTan_{k+1} + \\
            &\wein[\xkp]{\cdot}{\projNor_{k+1} (G(\xkp, \givenPrm, \alpha_k) - G(\xk, \givenPrm, \alpha_k))} \\
            &= \alpha_k \Hess[\manif] F (\xmin, \givenPrm) - \projTan_{k+1} \Hess[\x] f (\xkp, \givenPrm) \projTan_{k+1} + \projTan_{k+1} + \\
            &\wein[\xkp]{\cdot}{\projNor_{k+1} (G(\xkp, \givenPrm, \alpha_k) - G(\xk, \givenPrm, \alpha_k))} \,.
        \end{aligned}
    \end{equation*}
    Similarly, $\tilde{Q}_k$ is given by
    \begin{equation*}
        \tilde{Q}_k = \alpha_k \Hess[\manif] F (\xmin, \givenPrm) - \alpha_k \projTan_* \Hess[\x] f (\xmin, \givenPrm) \projTan_* + \projTan_* \,.
    \end{equation*}
    From \cite[Lemma~4.3]{LFP17}, $\alpha_k \Hess[\manif] F (\xmin, \givenPrm) - \alpha_k \projTan_* \Hess[\x] f (\xmin, \givenPrm) \projTan_*$ is positive semi-definite and the eigenvalues of $\tilde{Q}_k^{\dagger}$ lie in $(0, 1]$. Moreover, the non-expansiveness of $G(\cdot, \givenPrm, \alpha_k)$ and the continuity of $(\x, \v) \mapsto \wein[\x]{\cdot}{\v}$ due to the $C^2$-smoothness of $\manif$ implies that $\wein[\xkp]{\cdot}{\projNor_{k+1} (G(\xkp, \givenPrm, \alpha_k) - G(\xk, \givenPrm, \alpha_k))}\to0$. This entails that $Q_k - \tilde{Q}_k \to 0$ and the eigenvalues of $Q_k^{\dagger}$ are also eventually bounded. Hence, $Q_k^{\dagger} - \tilde{Q}_k^{\dagger}$, which can be rewritten as
    \begin{equation*}
        \begin{aligned}
            Q_k^{\dagger} - \tilde{Q}_k^{\dagger} &= Q_k^{\dagger} - Q_k^{\dagger} \projTan_* + Q_k^{\dagger} \projTan_* - \projTan_k \tilde{Q}_k^{\dagger} + \projTan_k \tilde{Q}_k^{\dagger} - \tilde{Q}_k^{\dagger} \\
            &= Q_k^{\dagger} \projTan_k - Q_k^{\dagger} \projTan_* + Q_k^{\dagger} \tilde{Q}_k \tilde{Q}_k^{\dagger} - Q_k^{\dagger} Q_k Q_*^{\dagger} + \projTan_k \tilde{Q}_k^{\dagger} - \tilde{Q}_k^{\dagger} \projTan_* \\
            &= Q_k^{\dagger} \left(\projTan_k - \projTan_*\right) - Q_k^{\dagger}\left(Q_k - \tilde{Q}_k\right)\tilde{Q}_k^{\dagger} + \left(\projTan_k - \projTan_*\right) \tilde{Q}_k^{\dagger} \,,
        \end{aligned}
    \end{equation*}
    also converges to $0$. Similarly $P_{\omega, k} - \tilde{P}_{\omega, k} \to 0$ for all $\omega\in\set{\x, \u, \alpha}$ because
    \begin{equation*}
        \begin{aligned}
            P_{\x, k} - \tilde{P}_{\x, k} &= \projTan_{k+1} \left( \opid - \alpha_k \Hess[\x] f (\xk, \givenPrm) \right) - \projTan_{*} \left(\opid - \alpha_k \Hess[\x] f (\xmin, \givenPrm) \right) \\
            &= \left( \projTan_{k+1} - \projTan_{*} \right) - \alpha_k \left( \projTan_{k+1} \Hess[\x] f (\xk, \givenPrm) - \projTan_{*} \Hess[\x] f (\xmin, \givenPrm) \right) \\
            P_{\u, k} - \tilde{P}_{\u, k} &= \alpha_k \left( D_{\u} \grad[\manif] g (\xkp, \givenPrm) + D_{\u} \projTan_{k+1} \grad[\x] f (\xk, \givenPrm) - D_{\u} \grad[\manif] F (\xmin, \givenPrm) \right) \\
            P_{\alpha, k} - \tilde{P}_{\alpha, k} &= \grad[\manif] g (\xkp, \givenPrm) + \projTan_{k+1} \grad[\x] f (\xk, \givenPrm) - \grad[\manif] F (\xmin, \givenPrm) \\
        \end{aligned}
    \end{equation*}
    This concludes the proof because from \eqref{eq:IFT}, $\deriv \pgd (\x, \u)$ is given by,
    \begin{equation} \label{eq:PGD:Derv}
        \deriv \pgd (\x, \u) = -\Hess[\manif] H (\pgd (\x, \u), \x, \u, \alpha)^{\dagger} \deriv[(\x, \u, \alpha)] \grad[\manif] H (\pgd (\x, \u), \x, \u, \alpha) \,.
    \end{equation}
    \item The expressions for $Q_{k} - \tilde{Q}_{k}$ and $P_{\omega, k} - \tilde{P}_{\omega, k}$ for $\omega\in\set{\x, \u, \alpha}$ clearly indicate that these sequences converge linearly under the given assumptions.
    \end{enumerate}
\end{proof}
\appSubSect{Theorem}{thm:PGD:AD}
\vspace{2ex}
\begin{proof}
    We again set $\xk \coloneqq \xk (\givenPrm)$ for simplicity. Thanks to Theorem~\ref{thm:PGD:Derv}\ref{itm:PGD:Derv:Itrs}, $\pgd[k]$ are $C^1$-smooth near $(\xk, \givenPrm, \alpha_k)$ for all $k\in\N$ provided that $\xz$ is sufficiently close to $\xmin$. Differentiation of the fixed-point iteration $\xkp \coloneqq \pgd[k] (\xk, \givenPrm)$ with respect to $\u$ yields
    \begin{equation} \label{eq:PGD:AD}
        \deriv \xkp (\givenPrm) = \derivx \pgd[k] (\xk, \givenPrm) \projTan (\xk) \deriv \xk (\givenPrm) + \derivu \pgd[k] (\xk, \givenPrm) \,,
    \end{equation}
    and Theorem~\ref{thm:PGD:IFT} asserts that under the given assumptions, for all $k\in\N$,
    \begin{equation} \label{eq:PGD:FPE:Derv}
        \deriv \argmap (\givenPrm) = \derivx \pgd[k] (\xmin, \givenPrm) \projTan (\xmin) \deriv \argmap (\givenPrm) + \derivu \pgd[k] (\xmin, \givenPrm) \,.
    \end{equation}
    Just like the arguments made in the proof of Theorem~\ref{thm:D:FPI:conv}, for any $\ud\in\spPrm$, if we define
    \begin{equation*}
        \begin{gathered}
            \ek\coloneqq \deriv \xk (\givenPrm) \ud - \deriv \argmap (\givenPrm) \ud \\
            B_k\coloneqq \derivx \pgd[k] (\xmin, \givenPrm) \projTan (\xmin) \\
            C_k\coloneqq \derivx \pgd[k] (\xk, \givenPrm) \projTan (\xk) - \derivx \pgd[k] (\xmin, \givenPrm) \projTan (\xmin) \\
            \dk \coloneqq (\deriv \fixMap[k] (\xk, \givenPrm) - \deriv \fixMap[k] (\xmin, \givenPrm)) (D \argmap (\givenPrm) \ud, \ud) \,,
        \end{gathered}
    \end{equation*}
    and subtract \eqref{eq:PGD:FPE:Derv} from \eqref{eq:PGD:AD}, we obtain the recursion $\ekp = B_k\ek + C_k\ek + \dk$. From Theorem~\ref{thm:PGD:IFT}, the continuity of $\rho$ and $\deriv \pgd$, $\sup_{k\in\N} \norm{B_k} < 1$ for all $k\in\N$ and from Theorem~\ref{thm:PGD:Derv}\ref{itm:PGD:Derv:Conv}, $C_k \to 0$ and $\dk \to 0$. Under the additional assumptions, $C_k$ and $\dk$ converge linearly with rate $q_{\x}$ due to Theorem~\ref{thm:PGD:Derv}\ref{itm:PGD:Derv:Itrs}. Therefore, the (linear) convergence of the derivative sequence $\seq[k\in\N]{\deriv \xk (\givenPrm)}$ follows from Theorem~\ref{thm:LGFPI:modified:conv}.
\end{proof}
\appSubSect{Theorem}{thm:APG:Derv}
\vspace{2ex}
\begin{proof}
    \begin{enumerate}[label=(\roman*)]
        \item This part follows from \cite[Corollary~33]{MO24} and the convergence of $\xk (\givenPrm)$ and $\alpha_k$.
        \item We set $\z\coloneqq(\x_1, \x_2)$, and $\y\coloneqq\x_1 + \beta (\x_1 - \x_2)$ and write the expression for $\deriv \apg (\z, \u)$ for $(\z, \u, \alpha, \beta) \in \neighbourhoodAPG[*]\times [0, 1]$:
        \begin{equation*}
            \begin{aligned}
                D_{\z} \apg (\z, \u) &= \begin{bmatrix}
                    (1+\beta) D_{\x} \pgd (\y, \u) & -\beta D_{\x} \pgd (\y, \u) \\
                    \opid & 0
                \end{bmatrix} \\
                D_{\u} \apg (\z, \u) &= \begin{bmatrix}
                    D_{\u} \pgd (\y, \u) \\
                    0
                \end{bmatrix} \\
                D_{\alpha} \apg (\z, \u) &= \begin{bmatrix}
                    D_{\alpha} \pgd (\y, \u) \\
                    0
                \end{bmatrix}\,,
            \end{aligned}
        \end{equation*}
        as provided in \cite[Corollary~33]{MO24}, where $\deriv \pgd (\y, \u)$ is given in \eqref{eq:PGD:Derv}. It is easy to see that the mapping $(\z, \u, \alpha, \beta) \mapsto \deriv \apg (\z, \u) $ is continuous on $\neighbourhoodAPG[*]\times[0, 1]$. Therefore, the sequences $\seq[k\geq K]{\deriv \apg[k] (\xk (\givenPrm), \xkm (\givenPrm), \givenPrm)}$ and $\seq[k\geq K]{\deriv \apg[k] (\xmin, \xmin, \givenPrm)}$ both converge to $\deriv \apg[*] (\xmin, \xmin, \givenPrm)$ and their difference converges to $0$.
        \item Because $\beta_k \in [0, 1]$ and under the additional assumptions, $\deriv \pgd[k] (\yk, \givenPrm) - \deriv \pgd[k] (\xmin, \givenPrm) = \O (\yk - \xmin) $ as established in Theorem~\ref{thm:PGD:Derv}\ref{itm:PGD:Derv:Conv:Linear}, where $\yk \coloneqq \xk (\givenPrm) + \beta_k (\xk (\givenPrm) - \xkm (\givenPrm))$, the result follows directly from the expressions of $\deriv \apg$.
    \end{enumerate}
\end{proof}
\appSubSect{Theorem}{thm:APG:AD}
\vspace{2ex}
\begin{proof}
    From Theorem~\ref{thm:APG:Derv} the mapping $(\x_1, \x_2, \u, \alpha)\mapsto\apg[k] (\x_1, \x_2, \u, \alpha)$ is $C^1$-smooth on $\neighbourhoodAPG[*]$. Therefore, when $\xz$ is close enough to $\xmin$, we have $(\xk, \xkm, \givenPrm, \alpha_k) \in \neighbourhoodAPG[*]$ where $\xk \coloneqq \xk (\givenPrm)$ and we obtain the following recursion for the derivative iterates
    \begin{equation*}
        \begin{bmatrix}
            \deriv \xkp (\givenPrm) \\
            \deriv \xk (\givenPrm)
        \end{bmatrix} = \derivz \apg[k] (\xk, \givenPrm) \projTan (\xk, \xkm) \begin{bmatrix}
            \deriv \xk (\givenPrm) \\
            \deriv \xkm (\givenPrm)
        \end{bmatrix} + \derivu \apg[k] (\xk, \givenPrm)
    \end{equation*}
    From Theorem~\ref{thm:APG:IFT}, we have
    \begin{equation*}
        \begin{bmatrix}
            \deriv \argmap (\givenPrm) \\
            \deriv \argmap (\givenPrm)
        \end{bmatrix} = \derivz \apg (\xmin, \givenPrm) \projTan (\xmin, \xmin) \begin{bmatrix}
            \deriv \argmap (\givenPrm) \\
            \deriv \argmap (\givenPrm)
        \end{bmatrix} + \derivu \apg (\xmin, \givenPrm) \,,
    \end{equation*}
    for $\beta = \beta_*$ and $\beta = \beta_k$ for all $k\in\N$. Therefore, for any $\ud\in\spPrm$, we define 
    \begin{equation*}
        \begin{aligned}
            \ek &\coloneqq (\deriv\xk (\givenPrm) - \deriv \argmap (\givenPrm), \deriv\xkm (\givenPrm) - \deriv \argmap (\givenPrm))\ud \\
            B_k &\coloneqq \derivx \apg[k] (\xmin, \xmin, \givenPrm) \projTan (\xmin, \xmin) \,,\quad B_* \coloneqq \derivx \apg[*] (\xmin, \xmin, \givenPrm) \projTan (\xmin, \xmin) \\
            C_k &\coloneqq \derivx \apg[k] (\xk, \xkm, \givenPrm) \projTan (\xk, \xkm) - \derivx \apg[k] (\xmin, \xmin, \givenPrm) \projTan (\xmin, \xmin) \\
            \dk &\coloneqq (\deriv \apg[k] (\xk, \xkm, \givenPrm) - \deriv \apg[k] (\xmin, \xmin, \givenPrm))(\deriv\argmap (\givenPrm) \ud, \deriv\argmap (\givenPrm)\ud, \ud)
        \end{aligned}
    \end{equation*}
    and obtain the recursion $\ekp = B_k\ek + C_k\ek + \dk$. Notice that $\rho (B_*) < 1$ from Theorem~\ref{thm:APG:IFT} and $B_k \to B_*$ from Theorem~\ref{thm:APG:Derv}\ref{itm:APG:Derv:Itrs}. Moreover $C_k \to 0$, and $\dk \to 0$ owing to Theorem~\ref{thm:APG:Derv}\ref{itm:APG:Derv:Conv} and under the additional assumptions, the convergence of $C_k$ and $\dk$ is linear with rate $q_{\x}$ thanks to Theorem~\ref{thm:APG:Derv}\ref{itm:APG:Derv:Conv:Linear}. Therefore,
    from Corollary~\ref{cor:LGFPI:modified:conv}, the sequence $\deriv \xk (\givenPrm) \ud$ converges (linearly) to $\deriv \argmap (\givenPrm) \ud$ for all $\ud \in \spPrm$.
\end{proof}

\section{Experimental Details} \label{sec:Exp:supp}
In this section, we fill out some missing details from Section~\ref{sec:exp}. For \eqref{prob:logistic}, we use MADELON dataset \cite{DG17} where $A \in \R^{M\times N}$ and $\b \in \set{-1, +1}^{M}$ where $M=2000$ and $N=501$ without any feature transformation apart from batch normalization. For $50$ set of experiments, we sample $\lambda \sim \mathcal {N} (0, 10^{-3})$ and perturb each element of $A$ with a Gaussian noise with standard deviation $10^{-3}$. The problem is $\mu$-strongly convex and has an $L$-Lipschitz continuous gradient where $\mu = \lambda$ and $L = \rho(A^{T}A) + \lambda$. For \eqref{prob:lasso}, the dataset $(A, \b)\in\R^{M\times N} \times \R^M$ with $70\leq M\leq 90$ and $N=200$ is artificially generated. In particular, for each of the $50$ problems, we generate $A$ and a sparse vector $\x^{\prime} \in \R^N$ with only $50$ non-zero elements. We sample each element of $A$ and each non-zero element of $\x^{\prime}$ from $U (0, 1)$ and $\mathcal{N} (0, 1)$ respectively. We then generate $\b$ by computing $A\x + \bm\eps$ where $\bm\eps \sim \mathcal N (0, 10^{-3}\opid)$. The problem has an $L$-Lipschitz continuous gradient with $L = \rho (A^{T}A)$ but is not even strictly convex. However, for each choice of $\lambda \sim U (0, 10)$, $A$, and $\b$, Assumptions~\ref{ass:RPD} and \ref{ass:ND} were satisfied for each experiment.

We obtain a very good estimate of $\xmin$ for \eqref{prob:logistic} by using Newton's method with backtracking line search and for \eqref{prob:lasso} by running APG for sufficiently large number of iterations. The derivative of the solution $D\argmap (\givenPrm)$ is computed by solving the linear system \eqref{eq:FPE:IFT:intro} for logistic regression and solving the reduced system \eqref{eq:IFT} after identifying the support for lasso regression \cite{BKM+22}. For each problem and for each experiment, we run PGD with four different choices of step size, namely, (i) $\alpha_k = 2/(L+m)$ for \eqref{prob:logistic} and $\alpha_k = 1/L$ for \eqref{prob:lasso}, (ii) $\alpha_k \sim U (0, \frac{2}{3L})$, (iii) $\alpha_k \sim U (\frac{2}{3L}, \frac{4}{3L})$ , and (iv) $\alpha_k \sim U (\frac{4}{3L}, \frac{2}{L})$, for each $k\in\N$. We also run APG with $\alpha_k = 1/L$ and $\beta_k = (k-1)/(k+5)$. Before starting each algorithm, we obtain $\xz \in B_{10^{-2}} (\xmin)$ by partially solving each problem through APG. For computational reasons, instead of generating the sequence $\deriv \xk (\u)$ where $\u = (A, \lambda)$ and $\u = (A, \b, \lambda)$ respectively for the two problems, we compute the directional derivative $\deriv \xk (\u) \ud$. The vector $\ud$ belongs to the same space as $\u$ and is fixed for the $5$ different algorithms being compared for each experiment and problem.

\section*{Acknowledgments}
Sheheryar Mehmood and Peter Ochs are supported by the German Research Foundation
(DFG Grant OC 150/4-1).


{\small
\bibliographystyle{ieee}
\bibliography{main}
}

\end{document}